\newtheorem{prop}{Proposition}
\newtheorem{lemma}[prop]{Lemma}
\newtheorem{remark}{Remark}
\newtheorem{theorem}[prop]{Theorem}
\newtheorem{corollary}[prop]{Corollary}
\newcommand{\wt}{\widetilde}
\newcommand{\iunit}{{\mathfrak i}}
\newcommand{\la}{\lambda}
\newcommand{\R}{\mathbb R}
\newcommand{\C}{\mathbb C}
\renewcommand{\theta}{\vartheta}
\newcommand{\W}{\mathcal W}
\newcommand{\w}{_{_{\mathcal W}}}
\newcommand{\Z}{\mathbb Z}
\newcommand{\Holder}{H\"older continuous}
\newcommand{\QT}{\mathcal Q\mathcal T}
\newcommand{\K}{\mathcal K}
\renewcommand{\epsilon}{\varepsilon}
\newcommand{\spess}{{\rm sp_{ess}}}
\newcommand{\rv}{real valued}
\newcommand{\QTm}{$\QT$ matrix}
\newcommand{\QTmm}{$\QT$ matrices}
\begin{document}

\title{Geometric means of quasi-Toeplitz matrices\thanks{The work of the first two authors was partly supported by INdAM (Istituto Nazionale di Alta Matematica) through a GNCS Project. A part of this research has
been done during a visit of the third author to the University of Perugia. Version of \today}}
\author{
Dario A. Bini\thanks{Dipartimento di Matematica, Università di Pisa, Italy ({\tt dario.bini@unipi.it, jie.meng@dm.unipi.it})} 
\and 
Bruno Iannazzo\thanks{Dipartimento di Matematica e Informatica, Universit\`a degli Studi di Perugia, Italy ({\tt bruno.iannazzo@unipg.it})}
\and 
$\mbox{Jie Meng}^{\dagger}$
}
 \date{}\thispagestyle{empty}
 \maketitle

\begin{abstract}
We study means of geometric type of quasi-Toeplitz matrices, that are semi-infinite matrices $A=(a_{i,j})_{i,j=1,2,\ldots}$ of the form $A=T(a)+E$, where $E$ represents a compact operator, and $T(a)$ is a semi-infinite Toeplitz matrix associated with the function $a$, with Fourier series $\sum_{\ell=-\infty}^{\infty} a_\ell e^{\iunit \ell t}$, in the sense that  $(T(a))_{i,j}=a_{j-i}$. If $a$ is \rv\ and essentially bounded, then these matrices represent bounded self-adjoint operators on~$\ell^2$. We consider the case where $a$ is a continuous function, where quasi-Toeplitz matrices coincide with a classical Toeplitz algebra, and the case where $a$ is in the Wiener algebra, that is, has absolutely convergent Fourier series.

We prove that if $a_1,\ldots,a_p$ are continuous and positive functions, or are in the Wiener algebra with some further conditions, then means of geometric type, such as the ALM, the NBMP and the Karcher mean of quasi-Toeplitz positive definite matrices associated with $a_1,\ldots,a_p$, are quasi-Toeplitz matrices associated with the geometric mean $(a_1\cdots a_p)^{1/p}$, which differ only by the compact correction. We show by numerical tests that these operator means can be practically approximated.

\bigskip

{\bf Keywords:} Quasi-Toeplitz matrices, Toeplitz algebra, matrix functions, operator mean, Karcher mean, geometric mean, continuous functional calculus.
\end{abstract}

\section{Introduction}

The concept of matrix geometric mean
of a set of positive definite matrices, together with its analysis and computation, has received an increasing interest in the past years due to its rich and elegant theoretical properties,
the nontrivial algorithmic issues related to its computation, and also the important role played in several applications. 

A fruitful approach to get a matrix geometric mean has been to identify and impose the right properties required by this function. These properties, known as Ando-Li-Mathias axioms, are listed in the seminal work \cite{alm}.
It is interesting to point out that, while in the case of two positive definite matrices $A_1,A_2$ the geometric mean $G=G(A_1,A_2)$ is uniquely defined, in the case of $p>2$ matrices $A_1,\ldots,A_p$ there are infinitely many functions fulfilling the Ando-Li-Mathias axioms. These axioms are satisfied, in particular, by the ALM mean \cite{alm}, the NBMP mean, independently introduced in \cite{nakamura} and \cite{bmp}, and by the Riemannian center of mass, the so-called Karcher mean, identified as a matrix geometric mean in \cite{moakher-2005}. We refer the reader to the book \cite{bhatia:book} for an introduction to matrix geometric means with historical remarks.

These means have been generalized in a natural way to the infinite dimensional case. For instance, the Karcher mean
has been extended to self-adjoint positive definite operators and to self-adjoint elements of a  $C^*$-algebras in \cite{lawson-lim-2014} and \cite{lawson-2020}, respectively. Another natural generalization relies on the concept of weighted mean: for instance, a generalization of the NBMP mean is presented in \cite{lll-12}. 

Matrix geometric means have played an important role in several applications such as radar detection \cite{barbaresco-2008}, \cite{barbaresco-2010}, image processing \cite{rathi}, elasticity \cite{moakher-2006};  while more recent applications include machine learning \cite{appl:ml1}, \cite{appl:ml2}, brain-computer interface \cite{appl:bci1}, \cite{appl:bci2}, and network analysis \cite{fi}. 
The demand from applications has required some effort in the design and analysis of numerical methods for matrix geometric means.
See in particular \cite{bmp}, \cite{bini-iannazzo-2013},   \cite{bini-iannazzo-2011}, \cite{iannazzo-2016}, 
\cite{jeuris-et-al-2012}, 
 \cite{iporcelli17}, \cite{huang}, and the literature cited therein, where algorithmic issues have been investigated. 

In certain applications, 
the matrices to be averaged have further structures originated by the peculiar features of the physical model that they describe. In particular, in the radar detection application, the shift invariance properties of some quantities involved in the model turn into the Toeplitz structure of the matrices. A (possibly infinite) matrix $T=(t_{i,j})$ is said to be Toeplitz if $ t_{i,j}=a_{j-i}$ for some $\{a_k\}_{k\in\mathbb Z}\subset\mathbb C$. The sequence $\{a_k\}_{k\in \mathbb Z}$ may arise as the set of Fourier coefficients of a function $a$ defined on the unit circle $\mathbb T=\{z\in\mathbb C\,:\,|z|=1\}$, with Fourier series $\sum_{\ell=-\infty}^\infty a_\ell z^\ell$. 
The function $a$ is said to be the symbol associated with the Toeplitz matrix, and the Toeplitz matrix is denoted by $T(a)$.

The problem of averaging finite-dimensional Toeplitz matrices has been treated in some recent papers, see for instance \cite{bini-et-al-2014}, \cite{jeuris-vandebril}, \cite{nobari-2016}, \cite{nobari-2018}, with the aim to provide a definition of matrix geometric mean $G=G(A_1,\ldots,A_p)$ 
which preserves the matrix structure of the input matrices $A_k$, for $k=1,\ldots,p$.

In this paper, we consider the problem of analyzing the structure of the matrix $G$ when $A_k=T(a_k)$, for $k=1,\ldots,p$, is a self-adjoint positive definite Toeplitz matrix and $G$ is either the ALM, NBMP, and their weighted counter-parts, or the Karcher mean. We start by considering the case where the symbols $a_k(z)$, $k=1,\ldots,p$, are continuous \rv\ positive functions. 

We show that if $G$ is the ALM, NBMP, or a weighted mean of the self-adjoint positive definite operators $T(a_1),\ldots,T(a_p)$, then, even though the Toeplitz structure is lost in $G$, a hidden structure is preserved. In fact, we show that $G$ is a quasi-Toeplitz matrix, that is, it can be written as $G=T(g)+K_G$, where $K_G$ is a self-adjoint compact  operator on the set $\ell^2$ formed by infinite vectors $v=(v_i)_{i\in\mathbb Z^+}$, $v_i\in\mathbb C$ such that $\sum_{i=1}^\infty |v_i|^2<\infty$. An interesting feature is that the function $g(z)$ turns out to be $(a_1(z)a_2(z)\cdots a_p(z))^{\frac{1}{p}}$, as expected. These results are extended to the more general case $A_k=T(a_k)+K_k$, where $K_k$ is a compact operator.

Among the side results that we have introduced to prove the main properties of the geometric mean, it is interesting to point out Theorem \ref{thm:f(A)} which states that for any continuous \rv\ function $a(z)$ and for any continuous function $f(t)$ defined on the spectrum of $T(a)$, the difference $f(T(a))-T(f(a))$ is a compact operator. 

Algebras of quasi-Toeplitz matrices are also known in the literature as \emph{Toeplitz algebras}.
See for instance \cite{BS:book99} where in the Example 2.8 it is considered the case of continuous symbols, or in Section 7.3, where the smallest closed subalgebra of $\mathcal B(\ell^p)$, $1<p<\infty$, containing the matrices of the type $T(a)$, where $a$ has an absolute convergent Fourier series, is considered. Here, $\mathcal B(\ell^p)$ denotes the set of bounded operators on the set $\ell^p$ of infinite vectors $v=(v_i)_{i\in\mathbb Z^+}$ such that $\sum_{i=1}^\infty |v_i|^p$ is finite.

After analyzing the case of continuous symbols, we identify additional regularity assumptions on the functions $a_1(z),\ldots,a_p(z)$, such that the summation
$\sum_{\ell=-\infty}^{+\infty}|g_\ell|$ is finite, where $\sum_{\ell=-\infty}^{+\infty}g_\ell z^\ell$ is the Fourier series of $g(z)=(a_1(z)\cdots a_p(z))^{1/p}$. This property is meaningful since it implies that $T(a_k)$ as well as $G$ are bounded operators on the set $\ell^\infty$ of infinite vectors with uniformly bounded components.

We will apply these ideas also in the case where the matrices $A_1,\ldots,A_p$, are of finite size $n$ and show that, for a sufficiently large value of $n$, the matrix $G$ is numerically well approximated by a Toeplitz matrix plus a matrix correction that is nonzero in the entries close to the top left corner and to the bottom right corner of the matrix.

Finally we discuss some computational issues. Namely, we examine classical algorithms for computing the square root and the $p$th root of a matrix as Cyclic Reduction and Newton's iteration, and analyze their convergence properties when applied to Toeplitz matrices given in terms of the associated symbols. Then we deal with the problem of computing $G$. We introduce and analyze algorithms for computing the Toeplitz part $T(g)$ and the correction part $K_G$ in a very efficient way, both in the case of infinite and of finite Toeplitz matrices. 
A set of numerical experiments shows that the geometric means of infinite quasi-Toeplitz matrices can be easily and accurately computed by relying on simple MATLAB implementations, based on the CQT-Toolbox of \cite{bmr}.

The paper is organized as follows: in the next section, after providing some preliminary results on Banach algebras and $C^*$-algebras, we
recall the most common matrix geometric means, introduce the class of quasi-Toeplitz matrices associated with a continuous symbol and study functions of matrices in this class. Moreover, we analyze the case where the symbol of quasi-Toeplitz matrices is in the Wiener class, that is, the associated Fourier series is absolutely convergent. In Section \ref{sec:planar} we prove that the most common definitions of geometric mean of operators preserve the structure of quasi-Toeplitz matrix, and that the symbol associated with the geometric mean is the geometric mean of the symbols associated with the input matrices. In the same section, we give regularity conditions on the symbols in order that the convergence of the ALM sequence of the symbol holds in the Wiener norm, this implies the convergence of the operator sequences in the infinity norm. In Section \ref{sec:issues} we discuss some issues related to the computation of the $p$th root and of the geometric mean of quasi-Toeplitz matrices, in Section \ref{sec:finite} we describe the geometric means  of finite quasi-Toeplitz matrices, while in Section \ref{sec:exp} we report the results of some numerical experiments. Finally, Section \ref{sec:conc} draws some conclusions and open problems.

\section{Preliminaries}
\subsection{Notation} Let $\iunit$ be the complex unit,
and denote $\mathbb T=\{z\in\mathbb C\,:\, |z|=1\}$ the unit circle in the complex plane. 
Given a function $f(z):\mathbb T\to\mathbb C$ the composition $\wt f(t)=f(e^{\iunit t})$ is a $2\pi$-periodic function from $\mathbb R$ to $\mathbb C$ which can be restricted to $[0,2\pi]$. For the sake of simplicity, we keep the notation $f(t)$ to denote $\wt f(t)$, where it should be understood that the $t$ variable is real and ranges in the set $[0,2\pi]$, while the $z$ variable is complex and ranges in the set $\mathbb T$.

For a given subset $\mathcal S\subset \mathbb R$, and $1\le p<\infty$, 
we denote by $L^p(\mathcal S)$ the set of 
functions $f(t):\mathcal S\to\mathbb C$, such that 
$\|f\|_{p,\mathcal S}=(\int_\mathcal S |f(t)|^p)^\frac 1p<\infty$.
We denote $L^\infty(\mathcal S)$ the set of measurable functions defined over $\mathcal S$
with finite essential supremum and we set
$\|f\|_{\infty,\mathcal S}=\mbox{ess}\sup_{t\in\mathcal S}|f(t)|$.
If the set $\mathcal S$ is clear from the context we write $\|f\|_{L^p}$ in place of $\|f\|_{p,\mathcal S}$ for $1\leq p<\infty$ and $\|f\|_{\infty}$ in place of $\|f\|_{\infty,\mathcal S}$.
 
We denote by $C(\mathcal S)$ the set of continuous functions $f:\mathcal S\to \C$.
The subset of $f\in C([0,2\pi])$ such that $f(0)=f(2\pi)$ is denoted by $C_{2\pi}$, while 
  $L^p_{2\pi}
=L^p([0,2\pi))$.

Let $\mathbb Z^+$ be the set of positive integers and denote by $\ell^p$, with $1\le p<\infty$, the set of infinite sequences $v=(v_i)_{i\in\mathbb Z^+}$, $v_i\in\mathbb C$,  such that $\|v\|_p:=(\sum_{i=1}^\infty |v_i|^p)^\frac{1}{p}$ is finite, while $\ell^\infty$ is the set of sequences such that $\|v\|_\infty=\sup_i|v_i|<\infty$.
If $A=(a_{i,j})_{i,j=1,\ldots,n}\in\C^{n\times n}$ is such that $a_{i,j}\in\mathbb C$, then the function $\|A\|_2=\max_{\|x\|=1}\|Ax\|_2$ is the matrix norm induced by the vector norm $\|x\|_2=(\sum_{i=1}^n|x_i|^2)^\frac{1}{2}$.
Similarly, if $A:\ell^p\to\ell^p$ is a linear  operator, the function $\|A\|_p=\sup_{\|v\|_p=1}\|Av\|_p$, 
is the operator norm induced by the $\ell^p$ norm $\|\cdot\|_p$.

Recall that any linear operator $A:\ell^2\to\ell^2$ can be represented by a semi-infinite matrix $(a_{i,j})_{i,j\in\mathbb Z^+}$, which we denote with the same symbol $A$. The set of bounded operators onto $\ell^2$ is denoted by $\mathcal B(\ell^2)$, or more simply by $\mathcal B$.
We recall that the set $\mathcal K\subset\mathcal B$ of compact operators is the closure in $\mathcal B$ of the set of bounded operators with finite rank. 

Given the matrix (operator) $A=(a_{i,j})$, we define $A^*=(\overline a_{j,i})$ the conjugate transpose of $A$, where $\overline x$ denotes the complex conjugate of $x\in\mathbb C$. A matrix (operator) $A$ is Hermitian or self-adjoint if $A=A^*$, moreover, is 
positive 
if in addition $v^*Av=\sum_i \overline v_i(Av)_i>0$ for any $v\in\mathbb C^n\setminus\{0\}$ ($v\in\ell^2\setminus\{0\}$). 
We say that $A$ is positive definite if there exists a constant $\gamma>0$ such that 
$v^*Av\geq \gamma \|v\|_2^2$ for any $v\in \ell^2$, $v\ne 0$.

\subsection{Banach algebras and $C^*$-algebras} A Banach algebra $\mathfrak B$ is a Banach space with a product such that $\|ab\|\le \|a\|\|b\|$, for $a,b\in\mathfrak B$, where $\|\cdot\|$ is the norm of $\mathfrak B$. Trivial examples of Banach algebras that we will use are: the set of essentially bounded functions $L^\infty(S)$, with $S\subset \R$,  with the norm $\|f\|_{\infty,S}$ 
and pointwise multiplication (as a special case we have $L_{2\pi}^\infty$); the set $C(K)$ of continuous functions on a compact $K\subset \C^n$ with pointwise multiplication; and $\mathcal B(\ell^2)$ with operator composition. 
If $a^{(k)}\in\mathfrak B$ is a sequence such that $\lim_k\|a^{(k)}-a\|=0$ for some $a\in\mathfrak B$, we write $a^{(k)}\to_{\mathfrak B} a$. 

An immediate consequence of the definition of Banach algebra is the following.
\begin{lemma}\label{thm:ba1}
Let $\mathfrak B$ be a Banach algebra and $a_1,\ldots,a_p\in\mathfrak B$. If the sequences $\{a_i^{(k)}\}_k\subset \mathfrak B$   are such that 
$a_i^{(k)}\to_{\mathfrak B} a_i$ for $i=1,\ldots,p$, then
\[
	a_1^{(k)}\cdots a_p^{(k)}\to_{\mathfrak B} a_1\cdots a_p.
\]
\end{lemma}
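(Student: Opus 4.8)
The plan is to derive the result from the single defining inequality of a Banach algebra, namely submultiplicativity of the norm, proceeding by induction on the number of factors $p$. The base case $p=1$ is exactly the hypothesis, and the heart of the matter is the case $p=2$; the general case then follows by a routine telescoping.

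For $p=2$ the key step is the elementary identity obtained by inserting and subtracting a mixed term,
\[
a_1^{(k)} a_2^{(k)} - a_1 a_2 = a_1^{(k)}\bigl(a_2^{(k)} - a_2\bigr) + \bigl(a_1^{(k)} - a_1\bigr) a_2.
\]
I would then take norms and apply $\|ab\|\le\|a\|\,\|b\|$ to obtain
\[
\bigl\| a_1^{(k)} a_2^{(k)} - a_1 a_2 \bigr\| \le \bigl\|a_1^{(k)}\bigr\|\,\bigl\|a_2^{(k)} - a_2\bigr\| + \bigl\|a_1^{(k)} - a_1\bigr\|\,\|a_2\|.
\]
Since $a_1^{(k)}\to_{\mathfrak B} a_1$, the real sequence $\bigl\|a_1^{(k)}\bigr\|$ is bounded, and as both $\bigl\|a_2^{(k)}-a_2\bigr\|\to 0$ and $\bigl\|a_1^{(k)}-a_1\bigr\|\to 0$, the right-hand side tends to $0$, which settles two factors.

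For the inductive step I would write $a_1^{(k)}\cdots a_p^{(k)} = \bigl(a_1^{(k)}\cdots a_{p-1}^{(k)}\bigr)\,a_p^{(k)}$, invoke the induction hypothesis to get $a_1^{(k)}\cdots a_{p-1}^{(k)}\to_{\mathfrak B} a_1\cdots a_{p-1}$, and apply the $p=2$ case to the two factors $a_1^{(k)}\cdots a_{p-1}^{(k)}$ and $a_p^{(k)}$. Alternatively, one can bypass the induction with the telescoping identity
\[
a_1^{(k)}\cdots a_p^{(k)} - a_1\cdots a_p = \sum_{j=1}^{p} a_1\cdots a_{j-1}\,\bigl(a_j^{(k)} - a_j\bigr)\, a_{j+1}^{(k)}\cdots a_p^{(k)}
\]
(empty products omitted), and bound each summand by submultiplicativity.

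There is no serious obstacle here; the only point that must be handled with a little care is the uniform boundedness in $k$ of the norms of the perturbed factors, so that the trailing products $\|a_{j+1}^{(k)}\|\cdots\|a_p^{(k)}\|$ stay bounded while the single difference factor $\|a_j^{(k)}-a_j\|$ forces the estimate to zero. This is immediate, since each convergent sequence $\{a_i^{(k)}\}_k$ is norm-bounded.
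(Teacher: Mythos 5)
Your proof is correct and follows essentially the same route as the paper: induction on the number of factors, the insert-and-subtract decomposition of the difference, submultiplicativity of the norm, and boundedness of the convergent sequences. The only cosmetic difference is that you peel off the last factor (or telescope) where the paper peels off the first.
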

\begin{proof}
Let $\|\cdot\|$ be the norm in $\mathfrak B$. We proceed by induction on $p$. The case $p=1$ is trivial and if the property is true for $p-1$ sequences, then
\[\begin{aligned}
\|a_1^{(k)}\cdots\ & a_p^{(k)}-a_1\cdots a_p\|\\
&\le \|a_1^{(k)}\cdots a_p^{(k)}-a_1^{(k)}a_2\cdots 
a_p\|+\|a_1^{(k)}a_2\cdots a_p-a_1\cdots a_p\|\\
&\le \|a_1^{(k)}\|\|a_2^{(k)}\cdots a_p^{(k)}-a_2\cdots a_p\|+\|a_1^{(k)}-a_1\|\|a_2\cdots a_p\|,
\end{aligned}
\]
and the latter tends to zero since $\|a_2^{(k)}\cdots a_p^{(k)}-a_2\cdots a_p\|$ tends to zero by inductive hypothesis, $\|a_1^{(k)}-a_1\|$ tends to zero by hypothesis, $\{\|a_1^{(k)}\|\}$ is uniformly bounded since it converges and $\|a_2\cdots a_p\|\le \|a_2\|\cdots \|a_p\|$ that is bounded.
\end{proof}
To any element $A$ of a Banach algebra, it is assigned the spectrum $\mathrm{sp}(A)$ that is the set of $\la\in\C$ such that $A-\la I$ fails to be invertible. The spectral radius $\rho(A)$ is defined as $\rho(A)=\sup_{\lambda\in\mathrm{sp}(A)} |\lambda|$.

A $C^*$-algebra is a Banach algebra $\mathfrak B$ with an involution $A\to A^*$ such that $(aS+bT)^*=\overline a S^*+\overline b T^*$; $(ST)^*=T^*S^*$, $(T^*)^*=T$ and $\|T^*T\|=\|T^2\|$, for $T,S\in\mathfrak B$ and $a,b\in \mathbb C$. Two important examples of $C^*$-algebras are $\mathcal B(\ell^2)$ with the adjoint operation and $C(K)$, for $K\subset \mathbb R$ compact, with the pointwise conjugation. 

A $C^*$-subalgebra $\mathfrak F\subset\mathfrak B$ is a closed subalgebra that contains the identity and such that $A\in\mathfrak F$ implies $A^*\in\mathfrak F$.

An element $A$ of a $C^*$-algebra is self-adjoint if $A^*=A$, normal if $A^*A=AA^*$. We need some results about $C^*$-algebras taken from Proposition 4.1.1, and Theorems 4.1.3 and 4.1.6 of \cite{kadison}.

\begin{lemma}\label{thm:lemC1}
Let $A$ be an element of a $C^*$-algebra $\mathfrak B$.  
\begin{enumerate}
\item[{\rm (i)}] If $A$ is normal then $\rho(A)=\|A\|$. Moreover, if $A$ is self-adjoint, then the spectrum $\mathrm{sp}(A)$ of $A$ is a compact subset of $\R$. 	

\item[{\rm (ii)}]
If  $A$ is self-adjoint, then there exists a unique continuous mapping $f\to f(A)\,:C(\mathrm{sp}(A))\to\mathfrak B$ such that $f(A)$ has its elementary meaning when $f$ is a polynomial. Moreover,  $f(A)$ is normal, while it is self-adjoint if and only if $f$ takes real values at the spectrum of $A$. 

\item[{\rm (iii)}]
If $A$ is self-adjoint and $f\in C(\mathrm{sp}(A))$, then $\|f(A)\|=\|f\|_{\infty,\mathrm{sp}(A)}$ and $\mathrm{sp}(f(A))=f(\mathrm{sp}(A))$.
\end{enumerate}
\end{lemma}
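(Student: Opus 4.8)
The three statements together amount to the continuous functional calculus for a self-adjoint element, and the plan is to build them up from two ingredients: the spectral radius formula $\rho(A)=\lim_n\|A^n\|^{1/n}$, valid in any Banach algebra, and the $C^*$-identity $\|T^*T\|=\|T\|^2$.

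\textbf{Part (i).} I would treat the self-adjoint case first. From the $C^*$-identity, $\|A^2\|=\|A^*A\|=\|A\|^2$, and iterating gives $\|A^{2^k}\|=\|A\|^{2^k}$; feeding the subsequence $n=2^k$ into the spectral radius formula yields $\rho(A)=\|A\|$. For normal $A$, normality gives $(A^n)^*A^n=(A^*A)^n$, hence $\|A^n\|^2=\|(A^n)^*A^n\|=\|(A^*A)^n\|$; since $A^*A$ is self-adjoint, taking $n$th roots and passing to the limit gives $\rho(A)^2=\rho(A^*A)=\|A^*A\|=\|A\|^2$, so again $\rho(A)=\|A\|$. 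To see $\mathrm{sp}(A)\subset\R$ for self-adjoint $A$, suppose $\la=\alpha+\iunit\beta\in\mathrm{sp}(A)$. For every real $t$ one has $\alpha+\iunit(\beta+t)\in\mathrm{sp}(A+\iunit tI)$, whence
\[
\alpha^2+(\beta+t)^2\le\rho(A+\iunit tI)^2\le\|A+\iunit tI\|^2=\|(A-\iunit tI)(A+\iunit tI)\|=\|A^2+t^2I\|\le\|A\|^2+t^2.
\]
Cancelling $t^2$ leaves $\alpha^2+\beta^2+2\beta t\le\|A\|^2$ for all $t\in\R$, which forces $\beta=0$. Compactness of $\mathrm{sp}(A)$ is the general fact that the spectrum of an element of a unital Banach algebra is a nonempty compact subset of $\C$.

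\textbf{Parts (ii)--(iii).} Here I would start with the polynomial calculus $p\mapsto p(A)=\sum_k c_kA^k$. The crucial step is the isometry: $p(A)$ is normal (it lies in the commutative algebra generated by $A$ and $I$, and commutes with $p(A)^*=\overline p(A)$), so by part (i) together with the spectral mapping theorem for polynomials, $\mathrm{sp}(p(A))=p(\mathrm{sp}(A))$,
\[
\|p(A)\|=\rho(p(A))=\max_{\la\in\mathrm{sp}(A)}|p(\la)|=\|p\|_{\infty,\mathrm{sp}(A)}.
\]
Since $\mathrm{sp}(A)$ is a compact subset of $\R$, the Stone--Weierstrass theorem makes the polynomials dense in $C(\mathrm{sp}(A))$, and the displayed identity shows $p\mapsto p(A)$ is isometric for the sup-norm; it therefore extends uniquely to an isometric linear map $f\mapsto f(A)$ on all of $C(\mathrm{sp}(A))$, inheriting multiplicativity and the adjoint relation $f(A)^*=\overline f(A)$ by continuity. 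Uniqueness follows because any continuous map agreeing with the elementary meaning on polynomials must agree on their closure. From $f(A)^*=\overline f(A)$ we obtain that $f(A)$ is normal, and that it is self-adjoint precisely when $\overline f=f$ on $\mathrm{sp}(A)$, i.e.\ when $f$ is real valued there; this proves (ii) and the norm identity $\|f(A)\|=\|f\|_{\infty,\mathrm{sp}(A)}$ of (iii).

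\textbf{Spectral mapping and the main obstacle.} It remains to prove $\mathrm{sp}(f(A))=f(\mathrm{sp}(A))$ in (iii). If $\mu\notin f(\mathrm{sp}(A))$, then $f-\mu$ is invertible in $C(\mathrm{sp}(A))$, and since the calculus is a homomorphism preserving inverses, $f(A)-\mu I$ is invertible, so $\mu\notin\mathrm{sp}(f(A))$; conversely, if $\mu=f(\la_0)$ with $\la_0\in\mathrm{sp}(A)$, one approximates $f$ by polynomials and applies the polynomial spectral mapping theorem, or invokes Gelfand theory for the commutative $C^*$-algebra $\overline{\{p(A)\}}$, to conclude $f(A)-\mu I$ is not invertible. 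The main obstacle throughout is the passage from polynomials to arbitrary continuous functions: everything hinges on the isometry $\|p(A)\|=\|p\|_{\infty,\mathrm{sp}(A)}$, whose proof needs both the norm--spectral-radius equality of part (i) and the spectral mapping theorem for polynomials. Once that isometry is secured, density and continuity supply the rest.
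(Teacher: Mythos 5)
Your proof is correct, but the paper does not actually prove this lemma: it is quoted verbatim as a package of known results (Proposition 4.1.1 and Theorems 4.1.3 and 4.1.6 of the cited Kadison--Ringrose reference), so there is no in-paper argument to compare against. What you have written is the standard proof of exactly those results --- the $C^*$-identity giving $\rho(A)=\|A\|$, the $A+\iunit tI$ trick for reality of the spectrum, the polynomial isometry $\|p(A)\|=\|p\|_{\infty,\mathrm{sp}(A)}$, and the Stone--Weierstrass extension --- and it is sound, including the openness-of-invertibles argument sketched for the hard inclusion of the spectral mapping theorem.
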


Lemma \ref{thm:lemC1} implies the existence of a \emph{continuous functional calculus}, that is to define a function of a self-adjoint element $A$ of a $C^*$-algebra when the function is continuous at the spectrum of $A$.

\subsection{Means of operators}

A great effort has been done to define properly the geometric mean of two or more positive definite matrices. The weighted geometric mean of two $n\times n$ matrices $A$ and $B$, with weight $t\in[0,1]$, is defined as
\[
	A\#_t B=
	A^{1/2}(A^{-1/2}BA^{-1/2})^tA^{1/2},
\]
while the geometric mean is $G(A,B)=A\# B=A\#_{1/2}B$. 
Notice that, using continuous functional calculus, the same formula makes sense also for bounded self-adjoint positive definite operators $A,B\in\mathcal B(\ell^2)$.

For more than two matrices, several attempts have been done before the right definition were fully understood. For the ease of the reader, here we limit the treatise to three positive matrices of the same size, while we refer to a later section for the case of more than three matrices.
The ALM mean, proposed by Ando, Li and Mathias \cite{alm}, has been introduced as the common limit of the sequences
\[
	A_{k+1}=B_k\# C_k,\quad
	B_{k+1}=C_k\# A_k,\quad
	C_{k+1}=A_k\# B_k,
\]
with $A_0=A, B_0=B, C_0=C$.
A nice feature of this mean is that the convergence of the sequence can be obtained using the Thompson metric \cite{thompson}
\[
	d(A,B)=\log \max\{\rho(A^{-1}B),\rho(B^{-1}A)\}. 
\]
It is well known that convergence in the Thompson metric implies convergence in the operator norm induced by the $\ell^2$ norm, also in the infinite dimensional case. This fact allows one to define the ALM mean of self-adjoint positive definite operators.

A variant of this construction, introduced independently by Bini, Meini, Poloni \cite{bmp} and Nakamura \cite{nakamura}, named NBMP mean is obtained by the sequences
\[
	A_{k+1}=A_k\#_{2/3}(B_k\# C_k),~\,
	B_{k+1}=B_k\#_{2/3}(C_k\# A_k),~\,
	C_{k+1}=C_k\#_{2/3}(A_k\# B_k),
\]
with $A_0=A$, $B_0=B$, $C_0=C$. The sequences converge to a common limit, different in general from the ALM mean, and with a faster rate than the ALM sequence.
The convergence of these sequences in the Thompson metric allows one to extend them to operators.
Weighted versions of the ALM and NBMP mean can be given by introducing suitable parameters, see Section \ref{sec:planar} for more details. 

A well-recognized geometric mean of matrices is the unique positive definite solution of the matrix equation
\[
	\log(X^{-1/2}AX^{-1/2})+\log(X^{-1/2}BX^{-1/2})+\log(X^{-1/2}CX^{-1/2})=0,
\]
the so-called Karcher mean of $A,B$ and $C$.
Its extension to operators has been more complicated, but it has been shown that the Karcher mean can be defined also for positive definite self-adjoint bounded operators, proving that the equation above has a unique positive definite self-adjoint solution \cite{lawson-lim-2014}. 

\subsection{Quasi-Toeplitz matrices with continuous symbols}

An integrable and $2\pi$-periodic function $a\in L^1_{2\pi}$, with Fourier series $\sum_{k=-\infty}^{\infty} a_ke^{\iunit k t}$ can be associated with the semi-infinite Toeplitz matrix $T(a)=(t_{i,j})_{i,j\in\mathbb Z^+}$ such that  $t_{i,j}=a_{j-i}$. The function $a$ is said to be the symbol associated with the Toeplitz matrix $T(a)$. A classical result states that $T(a)$ represents a bounded operator on $\ell^2$ if and only if $a\in L^{\infty}_{2\pi}$  \cite[Theorem 1.1]{BG:book00}. Moreover, if $a$ is continuous, then $\|T(a)\|_2=\|a\|_\infty$.

Toeplitz matrices form a linear subspace of $\mathcal B(\ell^2)$, closed by involution since $T(a)^*=T(\overline a)$, but not closed under multiplication, that is, by composition of operators in $B(\ell^2)$, so they are not a $C^*$-subalgebra of $\mathcal B(\ell^2)$. Nevertheless, there is a nice formula for the product of two Toeplitz matrices  \cite[Propositions 1.10 and 1.11]{BG:book00}.
\begin{theorem}\label{th:widom}
Let $a,b\in L_{2\pi}^{\infty}$,
\begin{equation}\label{eq:Tab}
    T(a)T(b)=T(ab)-H(a^-)H(b^+),
\end{equation}
where  $H(a^-)_{i,j}=(a_{-i-j+1})_{i,j\in\mathbb Z^+}$ and $H(b^+)=(b_{i+j-1})_{i,j\in\mathbb Z^+}$. Moreover, if $a,b$ are continuous functions then 
 $H(a^-),H(b^+)$ are compact operators on $\ell^2$.
\end{theorem}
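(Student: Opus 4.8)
The plan is to establish the matrix identity \eqref{eq:Tab} by a direct entrywise computation, valid for any $a,b\in L^\infty_{2\pi}$, and then to treat the compactness of the Hankel operators separately for continuous symbols. I would deliberately avoid a density argument for the identity itself, since Laurent polynomials are \emph{not} dense in $L^\infty_{2\pi}$; instead I would exploit the inclusion $L^\infty_{2\pi}\subset L^2_{2\pi}$, so that the Fourier coefficient sequences of $a$ and $b$ are square-summable, which is enough to make every series below converge absolutely by the Cauchy--Schwarz inequality.

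First I would compute the $(i,j)$ entry of the operator product. Using $(T(a))_{i,k}=a_{k-i}$ and $(T(b))_{k,j}=b_{j-k}$ and the substitution $\ell=k-i$,
\[
(T(a)T(b))_{i,j}=\sum_{k=1}^{\infty}a_{k-i}b_{j-k}=\sum_{\ell\geq 1-i}a_\ell\, b_{j-i-\ell}.
\]
On the other hand, the convolution rule for Fourier coefficients gives $(T(ab))_{i,j}=(ab)_{j-i}=\sum_{\ell\in\mathbb Z}a_\ell\, b_{j-i-\ell}$. Subtracting, the two series differ only by the tail $\ell\leq -i$, and the reindexing $\ell=-i-s+1$ yields
\[
(T(ab))_{i,j}-(T(a)T(b))_{i,j}=\sum_{\ell\leq -i}a_\ell\, b_{j-i-\ell}=\sum_{s=1}^{\infty}a_{-i-s+1}\,b_{j+s-1}.
\]
Recognizing $a_{-i-s+1}=H(a^-)_{i,s}$ and $b_{j+s-1}=H(b^+)_{s,j}$, the right-hand side is exactly $(H(a^-)H(b^+))_{i,j}$, which proves \eqref{eq:Tab} entrywise. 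Since $T(a),T(b)$ are bounded by the classical characterization, and the Hankel operators are bounded (see below), all operators in \eqref{eq:Tab} are bounded, and entrywise equality of bounded operators forces the operator identity.

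To bound the Hankel operators I would view $H(a^-)$ as a sub-block of the bi-infinite Laurent operator $L(a)=(a_{n-m})_{m,n\in\mathbb Z}$ on $\ell^2(\mathbb Z)$: after reflecting the column index via $j\mapsto 1-j$, the operator $H(a^-)$ is the compression of $L(a)$ to rows indexed by $\{m\geq 1\}$ and columns indexed by $\{n\leq 0\}$, since $(L(a))_{i,\,1-j}=a_{-i-j+1}$. Because $L(a)$ is unitarily equivalent, through the Fourier transform, to multiplication by $a$ on $L^2(\mathbb T)$, one has $\|L(a)\|=\|a\|_\infty$, and a compression (a coordinate sub-block composed with an isometric reindexing) inherits the bound $\|H(a^-)\|\leq\|a\|_\infty$; the analogous block decomposition gives $\|H(b^+)\|\leq\|b\|_\infty$.

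For the compactness claim I would use that trigonometric (Laurent) polynomials are dense in $C_{2\pi}$ for the sup norm (Fej\'er's theorem). Given continuous $a$, choose Laurent polynomials $a^{(N)}\to a$ uniformly. Each $H((a^{(N)})^-)$ has only finitely many nonzero entries, hence has finite rank, and applying the norm bound to $a-a^{(N)}$ gives $\|H(a^-)-H((a^{(N)})^-)\|\leq\|a-a^{(N)}\|_\infty\to 0$. Thus $H(a^-)$ is a norm limit of finite-rank operators, and since $\mathcal K$ is by definition the closure in $\mathcal B$ of the finite-rank operators, $H(a^-)\in\mathcal K$; the same argument applies to $H(b^+)$. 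The step I expect to require the most care is the corner-block identification yielding the uniform bound $\|H(a^-)\|\leq\|a\|_\infty$, as it is precisely this estimate that simultaneously secures boundedness in the identity and drives the finite-rank approximation in the compactness proof.
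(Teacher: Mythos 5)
Your proof is correct. The paper does not prove this statement at all --- it is quoted directly from Propositions 1.10 and 1.11 of the B\"ottcher--Grudsky reference \cite{BG:book00} --- and your argument (entrywise verification of the identity using square-summability of the Fourier coefficients, the bound $\|H(a^-)\|_2\le\|a\|_\infty$ via compression of the Laurent operator, and compactness by finite-rank approximation through Fej\'er polynomials) is essentially the standard proof found in that source, with all the delicate points (absolute convergence of the series, passage from entrywise to operator equality) handled properly.
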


A nice feature of equation \eqref{eq:Tab} is that it relates the symbol of the product to the product of the symbols and if $a,b\in C_{2\pi}$ then $T(a)T(b)-T(ab)$ belongs to the set $\K$ of compact operators on $\ell^2$ \cite{BG:book00}.

The smallest $C^*$-subalgebra of the $C^*$-algebra $\mathcal B(\ell^2)$ containing all Toeplitz operators with continuous symbols is \cite{BS:book99} 
\[
	\QT=\{T(a)+K\,:\, a\in C_{2\pi},K \in\K\},
\]
that we call the set of \emph{quasi-Toeplitz} matrices and it is also known as Toeplitz algebra. Notice that for $A\in\QT$ there exists unique $a\in C_{2\pi}$ and $K\in\K$ such that $A=T(a)+K$, because the intersection between Toeplitz matrices with continuous symbols and compact operators is the zero operator \cite[Proposition 1.2]{BG:book00}. To define a \QTm, we will use often the notation $A=T(a)+K\in\QT$, without saying explicitly that  $a$ is a continuous and $2\pi$-periodic function and $K$ is a compact operator.

The following lemma collects and resumes some known results from \cite[Sections 1.4 and 1.5]{BG:book00}, \cite[Section 1.1]{BS:book99}  and from \cite{stefan1964}, which will be useful in the sequel. In particular, it states properties of the spectrum $\mbox{sp}(A)$ of $A$, of the essential spectrum $\spess(A)$ defined by
$\spess(A)=\{\lambda\in \mathbb C: A-\lambda I \ \hbox{is 
 not Fredholm on}\ \mathcal B(\ell^2)\}$, and on the numerical range of $A$ defined as ${\rm W}(A)=\{x^*Ax\,:\,\|x\|=1\}$.

\begin{lemma}\label{lem:toeplitz}
The following properties hold:
\begin{enumerate}
\item[{\rm (i)}] $T(a)$ represents a bounded operator on $\ell^2$ if and only if $a\in  L_{2\pi}^{\infty}$,
and $T(a)$ is self adjoint if and only if $a$ is \rv. 
\end{enumerate}
Moreover, if $a\in C_{2\pi}$ then:
\begin{enumerate}\setcounter{enumi}{1}
\item[{\rm (ii)}] $\|a\|_\infty=\|T(a)\|_2\le \|T(a)+K\|_2$, for $K\in\mathcal K$; 
\item[{\rm (iii)}] $\mbox{\rm sp}(T(a))=a([0,2\pi])\cup\{\lambda\in\mathbb C\,:\,\mbox{\rm wind}(a-\lambda)\ne 0\}$ is compact;
\item[{\rm (iv)}] $a([0,2\pi])=\spess(T(a))=\spess(T(a)+K)\subset  \mbox{\rm sp}(T(a)+K)$ for $K\in\mathcal K$;
\item[{\rm (v)}] ${\rm sp} (A)$ is contained in the closure of  ${\rm W}(A)$ for $A\in\mathcal B(\ell^2)$.
\end{enumerate}
\end{lemma}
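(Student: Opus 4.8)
The plan is to verify each item by reducing it to a classical result from the cited references, treating the five claims essentially independently and in the order stated, since this lemma merely compiles known facts. For (i), the boundedness characterization is exactly the classical result already recalled before Theorem~\ref{th:widom}, so I would cite \cite[Theorem 1.1]{BG:book00}. For the self-adjointness, I would combine the identity $T(a)^*=T(\overline a)$ noted earlier with the injectivity of the map $a\mapsto T(a)$ on $L^\infty_{2\pi}$ (two symbols yielding the same semi-infinite matrix share all Fourier coefficients, hence agree almost everywhere). Thus $T(a)=T(a)^*$ forces $a=\overline a$ a.e., i.e. $a$ is \rv, and the converse is immediate.

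For (ii) the equality $\|T(a)\|_2=\|a\|_\infty$ for continuous $a$ is the identity recalled just before Theorem~\ref{th:widom}. For the inequality I would use that, for a continuous symbol, the symbol map $a\mapsto T(a)+\K$ into the Calkin algebra $\mathcal B/\K$ is an isometric homomorphism (its multiplicativity is precisely the content of \eqref{eq:Tab} together with the compactness of the Hankel correction in Theorem~\ref{th:widom}), so the \emph{essential norm} satisfies $\inf_{K\in\K}\|T(a)+K\|_2=\|a\|_\infty$, a standard fact from \cite[Section 1.5]{BG:book00}. Since an infimum is bounded above by any single term, the chain $\|a\|_\infty=\|T(a)\|_2\le\|T(a)+K\|_2$ follows at once.

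For (iii) and (iv) the core is the Gohberg spectral theorem for Toeplitz operators with continuous symbols. From \cite[Sections 1.4--1.5]{BG:book00} I would quote that $T(a)-\lambda I$ is Fredholm precisely when $\lambda\notin a([0,2\pi])$, which identifies $\spess(T(a))=a([0,2\pi])$, and that off this curve $T(a)-\lambda I$ is invertible iff $\mbox{\rm wind}(a-\lambda)=0$, which gives the spectral formula in (iii). Compactness of $\mbox{\rm sp}(T(a))$ follows because $a([0,2\pi])$ is a compact set (continuous image of a compact set) and the set of $\lambda$ with nonzero winding number is bounded with boundary contained in that curve. For (iv), the equality $\spess(T(a))=\spess(T(a)+K)$ expresses that the essential spectrum depends only on the image in $\mathcal B/\K$ and $K$ is compact, while the inclusion $\spess(B)\subset\mbox{\rm sp}(B)$ holds for every $B\in\mathcal B$ since invertibility implies Fredholmness.

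Finally, for (v) I would cite the classical theorem that the spectrum of a bounded Hilbert-space operator lies in the closure of its numerical range, which is the content of \cite{stefan1964}. I expect the only genuinely nontrivial ingredient to be the Fredholm/winding-number analysis underlying (iii)--(iv); everything else is either recalled above or a one-line consequence. Since that analysis is exactly the cited classical theory, the main work reduces to bookkeeping: matching each claim to the correct statement in \cite{BG:book00}, \cite{BS:book99}, and \cite{stefan1964}, and checking the orientation convention used for $\mbox{\rm wind}(a-\lambda)$.
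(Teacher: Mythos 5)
Your proposal is correct and matches the paper's treatment: the paper offers no proof of this lemma at all, simply presenting it as a compilation of known results from \cite{BS:book99}, \cite[Sections 1.4 and 1.5]{BG:book00}, and \cite{stefan1964}, which is exactly the reduction you carry out. The extra detail you supply (injectivity of $a\mapsto T(a)$ for (i), the essential-norm argument for (ii), the Fredholm/winding-number characterization for (iii)--(iv)) is accurate and consistent with the cited sources.
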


The following result will be useful.

\begin{lemma}\label{lem:pos}
Let $A=T(a)+K\in\QT$. If $A=A^*$ then $T(a)=T(a)^*$ and $K=K^*$, moreover, $a$ is \rv. If $A$ is positive, then $a$ is nonnegative. If in addition $A$ is positive definite then $a$ is strictly positive.
\end{lemma}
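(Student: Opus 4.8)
The plan is to treat the three assertions in turn, relying on the \emph{uniqueness} of the quasi-Toeplitz decomposition for the first, and on the spectral description of Lemma \ref{lem:toeplitz} for the other two.

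For the self-adjoint case, I would start from $A^*=T(a)^*+K^*=T(\overline a)+K^*$, using the identity $T(a)^*=T(\overline a)$ recorded earlier together with the fact that the adjoint of a compact operator is again compact. Since $\overline a\in C_{2\pi}$, this exhibits $A^*$ as an element of $\QT$ with Toeplitz symbol $\overline a$ and compact part $K^*$. The hypothesis $A=A^*$, combined with the uniqueness of the decomposition $A=T(a)+K$, then forces $T(a)=T(\overline a)$ and $K=K^*$; injectivity of the symbol-to-matrix correspondence (the entries of $T(a)$ are the Fourier coefficients of $a$) gives $a=\overline a$, i.e.\ $a$ is \rv, and hence $T(a)=T(a)^*$.

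For the positivity statements the key observation is that positivity of $A$ already entails $A=A^*$: if $v^*Av$ is real for every $v$, then $v^*(A-A^*)v=0$ for all $v$, whence $A=A^*$. Thus the previous paragraph applies and $a$ is \rv. Next I would note that positivity confines the numerical range, which is real by self-adjointness: one has $\mathrm{W}(A)\subset(0,\infty)$ when $A$ is positive, and $\mathrm{W}(A)\subset[\gamma,\infty)$ when $A$ is positive definite with constant $\gamma$. By Lemma \ref{lem:toeplitz}(v), $\mathrm{sp}(A)\subset\overline{\mathrm{W}(A)}$, so $\mathrm{sp}(A)\subset[0,\infty)$ in the positive case and $\mathrm{sp}(A)\subset[\gamma,\infty)$ in the positive definite case. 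Finally, Lemma \ref{lem:toeplitz}(iv) identifies the range of the symbol with the essential spectrum, $a([0,2\pi])=\spess(A)\subset\mathrm{sp}(A)$; chaining the inclusions yields $a(t)\ge 0$ for all $t$ when $A$ is positive, and $a(t)\ge\gamma>0$ for all $t$ when $A$ is positive definite, which is exactly strict positivity of the symbol.

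I do not expect a serious obstacle, since the heavy lifting is carried out by Lemma \ref{lem:toeplitz}. The one point requiring care is the chain $a([0,2\pi])=\spess(A)\subset\mathrm{sp}(A)\subset\overline{\mathrm{W}(A)}$: it is this identification of the symbol's range with (a subset of) the spectrum that transfers operator positivity into pointwise positivity of $a$. A minor subtlety worth flagging is that in the merely positive case the closure of $\mathrm{W}(A)$ may reach $0$, so one can conclude only $a\ge 0$ rather than $a>0$, in agreement with the statement.
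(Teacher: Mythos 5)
Your proposal is correct and follows essentially the same route as the paper: uniqueness of the quasi-Toeplitz decomposition for the self-adjoint part, and the chain $a([0,2\pi])=\spess(A)\subset\mathrm{sp}(A)\subset\overline{\mathrm{W}(A)}$ from Lemma \ref{lem:toeplitz}, parts (iv) and (v), for the positivity statements. The only (harmless) additions are your explicit remark that positivity already forces $A=A^*$ and the observation that the closure of $\mathrm{W}(A)$ may reach $0$ in the merely positive case; both points are implicit in the paper's argument.
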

\begin{proof}
If $A=A^*$, then $T(a)-T(a)^* +K-K^*=0$, and by the uniqueness of the decomposition of a quasi-Toeplitz matrix, we have that $K=K^*$ and $T(a)=T(a)^*$, that in turn implies that $a_k=\overline a_{-k}$ and thus $a(z)$ is real. 
 If $A$ is positive definite, there exists $\gamma>0$ such that  $x^*Ax\ge \gamma \|x\|_2>0$ for any $x\ne 0$. That is, the numerical range 
${\rm W}(A)$ of $A$  is formed by real numbers greater than or equal to $\gamma$ so that its closure contains positive values. 
Since by Lemma \ref{lem:toeplitz}, part (v), the spectrum ${\rm sp} (A)$ is contained in the closure of  ${\rm W}(A)$,
we have that $\lambda>0$ for any $\lambda\in {\rm sp}(A)$. From Lemma \ref{lem:toeplitz} part (iv), it follows that 
$a(t)\in {\rm sp}(A)$ so that $a(t)>0$. The case where $A$ is positive is similarly treated since $x^*Ax\ge 0$ for any $x\in\ell^2$ so that the closure of ${\rm W}(A)$ is formed by nonnegative numbers.
\end{proof}

In view of Lemma \ref{thm:lemC1}, the fact that $\QT$ is a $C^*$-algebra allows one to use continuous functional calculus. If $A$ is a self-adjoint quasi-Toeplitz matrix and $f$ is a function continuous on the spectrum of $A$, then one can define the normal quasi-Toeplitz matrix $f(A)$.
In particular, the $p$-th root of a self-adjoint positive definite quasi-Toeplitz matrix turns out to be a quasi-Toeplitz matrix.
This fact implies that the sequences generated by the ALM and the NBMP constructions, if the initial values are \QTmm, are formed by entries belonging to $\QT$.
We will prove that also the limit $G$ of these sequences belongs to $\QT$, and that it can be written as $G=T(g)+K_G$, where $g(t)$ is the geometric mean of the symbols associated with the Toeplitz part of the given matrices. Similarly, we will show that 
the Karcher mean of quasi-Toeplitz matrices is a quasi-Toeplitz matrix (the latter follows also from \cite{lawson-2020}).

In order to prove these properties we use the following results.

\begin{theorem}\label{thm:f(A)}
Let $a\in C_{2\pi}$ be a \rv\ function and $f$ a continuous function on the spectrum of $T(a)$, then
\begin{equation}\label{eq:bimformula}
	f(T(a))=T(f\circ a)+K,
\end{equation}
where $K$ is a compact operator. If $f$ takes real values on the spectrum of $T(a)$, then $K$ is self-adjoint and $\mathrm{sp}(f(T(a)))=\mathrm{sp}(T(f\circ a))$.
\end{theorem}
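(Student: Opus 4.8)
The plan is to reduce \eqref{eq:bimformula} to the case of polynomials via Widom's formula (Theorem \ref{th:widom}), and then pass to a general continuous $f$ by uniform approximation, exploiting that the compact correction survives the limit because $\mathcal K$ is closed in $\mathcal B(\ell^2)$. Conceptually this is just the statement that the quotient map $\QT\to\QT/\mathcal K\cong C_{2\pi}$, which is a $*$-homomorphism precisely because of Widom's formula, intertwines the continuous functional calculus; but I would carry it out explicitly to stay inside the concrete algebra.

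First I would establish the identity for a polynomial $f(t)=\sum_k c_k t^k$. By induction on $k$, each difference $T(a)^k-T(a^k)$ is compact: this follows from $T(a)T(b)=T(ab)-H(a^-)H(b^+)$ with both Hankel operators compact (Theorem \ref{th:widom}) together with the fact that a product of a compact and a bounded operator is compact. By linearity of $T(\cdot)$ this gives $f(T(a))=T(f\circ a)+K_f$ with $K_f$ compact, where $f\circ a=\sum_k c_k a^k\in C_{2\pi}$ is the pointwise composition (it is $2\pi$-periodic since $a\in C_{2\pi}$).

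Next comes the approximation step. Since $a$ is real-valued, $T(a)$ is self-adjoint and $\mathrm{sp}(T(a))$ is a compact subset of $\R$ (Lemma \ref{thm:lemC1}(i)), so by Weierstrass there are polynomials $p_n\to f$ uniformly on $\mathrm{sp}(T(a))$. The norm identity of continuous functional calculus (Lemma \ref{thm:lemC1}(iii)) yields $\|p_n(T(a))-f(T(a))\|_2=\|p_n-f\|_{\infty,\mathrm{sp}(T(a))}\to 0$. On the other side, $a([0,2\pi])\subseteq\mathrm{sp}(T(a))$ (Lemma \ref{lem:toeplitz}(iv)), so $p_n\circ a\to f\circ a$ uniformly on $[0,2\pi]$, and since $\|T(g)\|_2=\|g\|_\infty$ for continuous $g$ (Lemma \ref{lem:toeplitz}(ii)) we get $\|T(p_n\circ a)-T(f\circ a)\|_2\to 0$. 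Setting $K_n:=p_n(T(a))-T(p_n\circ a)$, each $K_n$ is compact by the polynomial case, and $K_n\to f(T(a))-T(f\circ a)$ in operator norm; as $\mathcal K$ is closed, the limit $K:=f(T(a))-T(f\circ a)$ is compact, which is \eqref{eq:bimformula}. For the supplementary claims, if $f$ is real on $\mathrm{sp}(T(a))$ then $f(T(a))$ is self-adjoint (Lemma \ref{thm:lemC1}(ii)) and $f\circ a$ is real-valued so $T(f\circ a)$ is self-adjoint (Lemma \ref{lem:toeplitz}(i)), whence $K$ is self-adjoint; and since $a$ is real the winding-number set in Lemma \ref{lem:toeplitz}(iii) is empty, giving $\mathrm{sp}(T(a))=a([0,2\pi])$ and $\mathrm{sp}(T(f\circ a))=f(a([0,2\pi]))=f(\mathrm{sp}(T(a)))=\mathrm{sp}(f(T(a)))$, the last equality by Lemma \ref{thm:lemC1}(iii).

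The step I expect to be the crux is the limiting argument: one must control the two operator-norm limits simultaneously and then invoke closedness of $\mathcal K$. The clean matching of the two convergences rests on the inclusion $a([0,2\pi])\subseteq\mathrm{sp}(T(a))$, so that a single uniform polynomial approximation on the compact set $\mathrm{sp}(T(a))$ drives both $p_n(T(a))\to f(T(a))$ and $T(p_n\circ a)\to T(f\circ a)$; everything else is bookkeeping with the already-established properties of $T(\cdot)$ and of the functional calculus.
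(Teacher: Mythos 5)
Your proof is correct and follows essentially the same route as the paper: establish \eqref{eq:bimformula} for polynomials via Theorem \ref{th:widom}, approximate $f$ uniformly on $\mathrm{sp}(T(a))$ by Weierstrass, drive both $p_n(T(a))\to f(T(a))$ and $T(p_n\circ a)\to T(f\circ a)$ in operator norm using the inclusion $a([0,2\pi])\subseteq\mathrm{sp}(T(a))$, and conclude by closedness of $\mathcal K$; the self-adjointness and spectrum claims are handled identically. The only differences are cosmetic: you spell out the induction for the polynomial case and the winding-number argument for $\mathrm{sp}(T(a))=a([0,2\pi])$, which the paper leaves implicit.
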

\begin{proof}
Since $a$ is \rv, by Lemma \ref{lem:toeplitz} the operator $T(a)$ is self-adjoint and since $a$ is continuous, its spectrum $\Sigma$ is a compact set containing the range of $a$ (see Lemma \ref{lem:toeplitz}). Moreover, one can define $f(T(a))$, for any function $f$ continuous on $\Sigma$, using the continuous functional calculus and, since the {function $f$ }is continuous in $\Sigma$, by Lemma \ref{thm:lemC1}, we have $\|f(T(a))\|_2=\|f\|_{\infty,\Sigma}$.

From Theorem \ref{th:widom} it follows that if $f$ is a polynomial, then equation \eqref{eq:bimformula} holds. By using an approximation argument, we prove that \eqref{eq:bimformula} still holds for any continuous function $f$. Indeed, $f$ can be approximated uniformly by a sequence of polynomials $p_n$ such that $\|f-p_n\|_{\infty,\Sigma}$ tends to 0 (by the Weierstrass theorem) and there exist compact operators $K_k$ such that
\[
	p_k(T(a))=T(p_k\circ a)+K_k.
\]
Since continuous functional calculus preserves $C^*$-algebras, then $f(T(a))\in \QT$, that is $f(T(a))=T(b)+K$, what we need to prove is that $b=f\circ a$.

In order to get the result, in view of the uniqueness of the decomposition of a quasi-Toeplitz matrix, it is sufficient to prove that $f(T(a))-T(f\circ a)$ is a compact operator.

We have
\begin{multline}\label{eq:thm1}
  \|f(T(a))-T(f\circ a)-K_k\|_2 \\ \le
  \|f(T(a))-p_k(T(a))\|_2+\|T(f\circ a)-T(p_k\circ a)\|_2
   \le 2\|f-p_k\|_{\infty,\Sigma},
\end{multline}
where the last inequality follows by functional calculus, since $f-p_k$ is continuous in the spectrum $\Sigma$ of $T(a)$ and from the property  $\|T(a)\|_2=\|a\|_{\infty}$ (compare Lemma \ref{lem:toeplitz}).  
Whence we get
\[
	\|T(f\circ a-p_k\circ a)\|_2 =
	\|(f-p_k)\circ a\|_{\infty,[0,2\pi]}=\|f-p_k\|_{\infty,a([0,2\pi])}\le\|f-p_k\|_{\infty,\Sigma}.
\]
Inequality \eqref{eq:thm1} implies that $K_k$ converges to $f(T(a))-T(f\circ a)$ in the operator norm, and since $\mathcal K$ is closed, we may conclude that $f(T(a))-T(f\circ a)$ is a compact operator, that is what we wanted to prove.

Regarding the last statement, by Lemma \ref{thm:lemC1}, part (ii) and Lemma \ref{lem:toeplitz}, we know that $f(T(a))$ and $T(f\circ a)$ are self-adjoint and thus $K$ is self-adjoint as well, by Lemma \ref{thm:lemC1}, part (iii), and Lemma \ref{lem:toeplitz}, we get
\[
\mathrm{sp}(f(T(a)))=f(\mathrm{sp}(T(a)))=f(a([0,2\pi]))=\mathrm{sp}(T(f\circ a)).
\]
\end{proof}

By slightly modifying the above proof, we can easily arrive at the following generalization.

\begin{theorem}\label{thm:f(A)gen}
Let $a\in C_{2\pi}$ be a \rv\ function, $H$ a self-adjoint compact operator and $f$ a continuous function on the spectrum of $T(a)+H$, then
$f(T(a)+H)=T(f\circ a)+K$,
where $K$ is a compact operator. If $f$ takes real values on the spectrum of $T(a)$, then $K$ is self-adjoint.
\end{theorem}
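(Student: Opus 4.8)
The plan is to follow verbatim the three-step architecture of the proof of Theorem~\ref{thm:f(A)}, inserting the compact perturbation~$H$ only where it matters. Set $A=T(a)+H$. Since $a$ is \rv, Lemma~\ref{lem:toeplitz}(i) gives $T(a)=T(a)^*$, and $H=H^*$ by hypothesis, so $A$ is a self-adjoint element of the $C^*$-algebra~$\QT$ (it is manifestly of the form ``Toeplitz plus compact''). Hence $\mathrm{sp}(A)=:\Sigma$ is a compact subset of~$\R$ by Lemma~\ref{thm:lemC1}(i), and $f(A)$ is defined by continuous functional calculus, with $\|g(A)\|_2=\|g\|_{\infty,\Sigma}$ for every $g\in C(\Sigma)$ by Lemma~\ref{thm:lemC1}(iii). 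Because functional calculus stays inside the $C^*$-subalgebra~$\QT$, we already know $f(A)=T(b)+K$ for some $b\in C_{2\pi}$ and $K\in\K$; by the uniqueness of the quasi-Toeplitz decomposition it therefore suffices to prove that $f(A)-T(f\circ a)$ is compact (which then forces $b=f\circ a$).

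The only genuinely new point, compared with Theorem~\ref{thm:f(A)}, is the polynomial case, and this is where I expect the (mild) obstacle to sit. For a polynomial $p$ I would expand $p(A)=p(T(a)+H)$ monomial by monomial: each power $(T(a)+H)^n$ equals $T(a)^n$ plus a finite sum of operator products, every one of which contains at least one factor~$H$. Since $\K$ is a closed two-sided ideal of $\mathcal B(\ell^2)$, each such product is compact, while $T(a)^n=T(a^n)+\text{(compact)}$ by Widom's formula (Theorem~\ref{th:widom} applied repeatedly). Summing, $p(A)=T(p\circ a)+K_p$ with $K_p\in\K$, exactly as in the unperturbed case; the ideal property of~$\K$ is what makes the $H$-terms disappear into the compact remainder.

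Next I would approximate. By the Weierstrass theorem choose polynomials $p_k$ with $\|f-p_k\|_{\infty,\Sigma}\to0$ and write $p_k(A)=T(p_k\circ a)+K_k$. Then
\[
  f(A)-T(f\circ a)-K_k=(f-p_k)(A)-T\big((f-p_k)\circ a\big),
\]
whose $\ell^2$-operator norm is at most $\|(f-p_k)(A)\|_2+\|T((f-p_k)\circ a)\|_2=\|f-p_k\|_{\infty,\Sigma}+\|(f-p_k)\circ a\|_{\infty,[0,2\pi]}$, using Lemma~\ref{thm:lemC1}(iii) for the first term and $\|T(c)\|_2=\|c\|_\infty$ for continuous~$c$ (Lemma~\ref{lem:toeplitz}(ii)) for the second. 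The one place where the perturbation must be controlled is the inclusion $a([0,2\pi])=\spess(A)\subseteq\mathrm{sp}(A)=\Sigma$, which is exactly Lemma~\ref{lem:toeplitz}(iv) with $K=H$; it yields $\|(f-p_k)\circ a\|_{\infty,[0,2\pi]}=\|f-p_k\|_{\infty,a([0,2\pi])}\le\|f-p_k\|_{\infty,\Sigma}$, so the whole bound is $\le2\|f-p_k\|_{\infty,\Sigma}\to0$. Thus $K_k\to f(A)-T(f\circ a)$ in operator norm, and since $\K$ is closed the limit is compact, as required.

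Finally, for the self-adjointness claim I would argue as in Theorem~\ref{thm:f(A)}: if $f$ is \rv\ on $\mathrm{sp}(A)=\mathrm{sp}(T(a)+H)$ --- a set that contains $a([0,2\pi])$ by the inclusion above --- then $f(A)$ is self-adjoint by Lemma~\ref{thm:lemC1}(ii), while $f\circ a$ is a \rv\ symbol so that $T(f\circ a)$ is self-adjoint by Lemma~\ref{lem:toeplitz}(i); hence $K=f(A)-T(f\circ a)$ is self-adjoint. In short, the proof is a faithful transcription of the one for Theorem~\ref{thm:f(A)}, the only two adjustments being the ideal argument for the polynomial case and the use of Lemma~\ref{lem:toeplitz}(iv) (rather than (iii)) to keep $a([0,2\pi])$ inside the spectrum after the compact perturbation.
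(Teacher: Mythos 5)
Your proof is correct and is precisely the ``slight modification'' of the proof of Theorem~\ref{thm:f(A)} that the paper invokes without writing out: you have correctly identified the only two points needing adjustment, namely the ideal property of $\K$ to absorb the $H$-terms in the polynomial case, and Lemma~\ref{lem:toeplitz}(iv) to keep $a([0,2\pi])$ inside $\mathrm{sp}(T(a)+H)$ for the uniform-approximation bound. No gaps.
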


An immediate consequence of the above result is the following corollary related to the weighted geometric mean.

\begin{corollary}\label{cor:0}
Let $A,B\in\QT$ be positive definite operators associated with the continuous symbols $a,b$, respectively. 
Then, $a,b>0$ and for $t\in[0,1]$, we have $G=A\#_t B\in\QT$ and the symbol $g$ associated with 
$G$ is such that $g=a^{1-t}b^t$.
\end{corollary}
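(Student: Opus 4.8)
The plan is to unwind the definition
$A \#_t B = A^{1/2}\bigl(A^{-1/2}BA^{-1/2}\bigr)^t A^{1/2}$
one factor at a time, applying Theorem \ref{thm:f(A)gen} to each fractional power and propagating the symbol through each product. First I would record that, since $A$ and $B$ are positive definite elements of $\QT$, Lemma \ref{lem:pos} immediately yields $a,b>0$, which is the first assertion. Positive definiteness together with Lemma \ref{lem:toeplitz}, part (v), also guarantees that $\mathrm{sp}(A)$ and $\mathrm{sp}(B)$ lie in an interval $[\gamma,\beta]\subset(0,\infty)$ bounded away from the origin, which is what makes the relevant scalar power functions continuous on these spectra.

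Next I would treat the fractional powers of $A$. Writing $A=T(a)+K_A$ with $K_A$ self-adjoint and compact, the maps $x\mapsto x^{1/2}$ and $x\mapsto x^{-1/2}$ are continuous on $\mathrm{sp}(A)\subset[\gamma,\beta]$, so Theorem \ref{thm:f(A)gen} gives $A^{1/2}=T(a^{1/2})+K_1$ and $A^{-1/2}=T(a^{-1/2})+K_2$ with $K_1,K_2$ compact (and self-adjoint, the symbols being real). In particular both are quasi-Toeplitz.

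Then I would form $M:=A^{-1/2}BA^{-1/2}$. Being a product of three elements of the $C^*$-algebra $\QT$, it lies in $\QT$; and since the symbol of a product of quasi-Toeplitz matrices is the product of the symbols---a consequence of Theorem \ref{th:widom}, the Hankel corrections being compact for continuous symbols---the symbol of $M$ is $a^{-1/2}\cdot b\cdot a^{-1/2}=a^{-1}b$. The estimate $v^*Mv=(A^{-1/2}v)^*B(A^{-1/2}v)\ge \gamma_B\,\|A^{-1/2}v\|_2^2\ge \gamma_B\,\|A^{1/2}\|_2^{-2}\,\|v\|_2^2$ shows that $M$ is again positive definite, so its spectrum is bounded away from $0$ and $x\mapsto x^t$ is continuous there. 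Applying Theorem \ref{thm:f(A)gen} once more gives $M^t=T\bigl((a^{-1}b)^t\bigr)+K_3=T(a^{-t}b^t)+K_3$ with $K_3$ compact. Finally $G=A^{1/2}M^tA^{1/2}$ is again a product of three quasi-Toeplitz matrices, hence lies in $\QT$, and its symbol is $a^{1/2}\cdot a^{-t}b^t\cdot a^{1/2}=a^{1-t}b^t$, as claimed.

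The computation is essentially a bookkeeping exercise, so the one point requiring genuine care---and the main obstacle---is ensuring at \emph{each} stage that the scalar power function is continuous on the spectrum of the operator to which it is applied; this is exactly the requirement that Theorem \ref{thm:f(A)gen} be applicable. It reduces to verifying that positive definiteness (hence a uniform spectral gap at $0$) is preserved under the congruence $B\mapsto A^{-1/2}BA^{-1/2}$, which I would justify through the boundedness and bounded invertibility of $A^{\pm1/2}$ as in the estimate above.
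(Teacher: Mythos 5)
Your proposal is correct and follows essentially the same route as the paper: deduce $a,b>0$ from Lemma \ref{lem:pos}, then unwind the definition of $A\#_t B$ using Theorem \ref{thm:f(A)gen} for the fractional powers and the product rule of Theorem \ref{th:widom} for the symbols. The paper's proof is much terser; your added verifications (spectra bounded away from zero, positive definiteness preserved under the congruence $B\mapsto A^{-1/2}BA^{-1/2}$) are exactly the details it leaves implicit, and they are carried out correctly.
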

\begin{proof} From Lemma \ref{lem:pos} we deduce that if $A$ is positive
definite then $a,b>0$.  
By definition, $A\#_t B=A^\frac{1}{2}(A^{-\frac{1}{2}}BA^{-\frac{1}{2}})^t A^{\frac{1}{2}}$ so that, by Theorem \ref{thm:f(A)gen} one has
$A^\frac{1}{2}, (A^{-\frac{1}{2}}BA^{-\frac{1}{2}})^t\in \QT$, thus $A\#_t B\in\QT$
and $g=a^{1-t}b^t$.
\end{proof}

The next lemma shows that if a sequence $\{A_k\}_k\subset\QT$ converges to $A\in\mathcal{B}(\ell^2)$, then $A\in\QT$ and the Toeplitz part and the compact part of $A_k$ converge to the Toeplitz part and the compact part of $A$, respectively.

\begin{lemma}\label{thm:lemma0}
Let $\{A_k\}_k$ be a sequence of quasi-Toeplitz matrices such that $A_k=T(a_k)+K_k\in\QT$, where $a_k\in C_{2\pi}$, and $K_k\in\K$, for $k\in\mathbb Z^+$. Let $A\in\mathcal B(\ell^2)$ be such that $\lim_k\|A-A_k\|_2=0$. Then $A\in\QT$, that is $A=T(a)+K$ with $a\in C_{2\pi}$ and $K\in\K$ and, moreover,  $\lim_k\|a-a_k\|_\infty=0$ and
$\lim_k\|K-K_k\|_2=0$.
\end{lemma}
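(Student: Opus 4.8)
The plan is to reduce everything to the key a priori estimate recorded in Lemma \ref{lem:toeplitz}, part (ii): for any quasi-Toeplitz matrix $T(b)+L$ with $b\in C_{2\pi}$ and $L\in\K$, one has $\|b\|_\infty=\|T(b)\|_2\le\|T(b)+L\|_2$. This bounds the sup-norm of the symbol by the operator norm of the whole matrix, which is exactly the tool that decouples the convergence of the Toeplitz parts from that of the compact parts. Everything else is then routine, and I do not expect a genuine obstacle.

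First I would show that the sequence of symbols $\{a_k\}_k$ is Cauchy in $C_{2\pi}$. For any indices $j,k$, the difference $A_j-A_k=T(a_j-a_k)+(K_j-K_k)$ is again a quasi-Toeplitz matrix, with symbol $a_j-a_k\in C_{2\pi}$ and compact part $K_j-K_k\in\K$ (both $C_{2\pi}$ and $\K$ being linear spaces). Applying the estimate above gives $\|a_j-a_k\|_\infty\le\|A_j-A_k\|_2$. Since $\{A_k\}_k$ converges in $\mathcal B(\ell^2)$ it is Cauchy there, and hence $\{a_k\}_k$ is Cauchy in the sup-norm. As $C_{2\pi}$ equipped with $\|\cdot\|_\infty$ is a Banach space, there exists $a\in C_{2\pi}$ with $\|a_k-a\|_\infty\to 0$, which is already the first of the two claimed convergence statements.

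Next I would identify the limit and recover the compact part. Using the equality $\|T(c)\|_2=\|c\|_\infty$ for continuous symbols, we get $\|T(a)-T(a_k)\|_2=\|a-a_k\|_\infty\to 0$, so the Toeplitz parts converge to $T(a)$. Setting $K:=A-T(a)$, we then have $K_k=A_k-T(a_k)\to A-T(a)=K$ in operator norm; since each $K_k\in\K$ and $\K$ is closed in $\mathcal B(\ell^2)$, the limit $K$ is compact. This simultaneously establishes that $A=T(a)+K\in\QT$ and that $\|K-K_k\|_2\to 0$.

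The only points requiring a moment of care are that the inequality from Lemma \ref{lem:toeplitz}(ii) is being applied to \emph{differences} of quasi-Toeplitz matrices (legitimate, since the relevant symbol is continuous and the compact part is compact), and that the pair $(a,K)$ obtained in the limit is well defined, which is guaranteed by the uniqueness of the quasi-Toeplitz decomposition already recorded in the text. I note that membership $A\in\QT$ could alternatively be deduced immediately from the fact that $\QT$ is a (closed) $C^*$-subalgebra of $\mathcal B(\ell^2)$; however, the argument above is self-contained and yields the two norm-convergence statements at the same time, so I would prefer to run it directly.
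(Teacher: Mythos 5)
Your proof is correct, and it rests on exactly the same key estimate as the paper's, namely Lemma \ref{lem:toeplitz}, part (ii), applied in the form $\|a_j-a_k\|_\infty=\|T(a_j-a_k)\|_2\le\|A_j-A_k\|_2$. The one genuine difference is how membership $A\in\QT$ is obtained. The paper gets it in one line by citing the fact that $\QT$ is a closed $C^*$-subalgebra of $\mathcal B(\ell^2)$, and only then reads off the two convergence statements from the estimate. You instead construct the decomposition by hand: the estimate makes $\{a_k\}_k$ Cauchy in $(C_{2\pi},\|\cdot\|_\infty)$, completeness gives the limit symbol $a$, the identity $\|T(c)\|_2=\|c\|_\infty$ transfers this to convergence of the Toeplitz parts, and closedness of $\K$ in $\mathcal B(\ell^2)$ yields compactness of $K:=A-T(a)$. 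Your route is slightly longer but more self-contained, since it does not lean on the (nontrivial, cited) closedness of the Toeplitz algebra and in fact reproves that closedness along this particular sequence; the paper's route is shorter given that background fact. Both arguments implicitly use the uniqueness of the quasi-Toeplitz decomposition to speak of \emph{the} symbol and \emph{the} compact part, which you correctly flag.
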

\begin{proof}
Since $\QT$ is a $C^*$-algebra, then $\lim_k\|A-A_k\|_2=0$ implies that $A\in\QT$. 
From the inequality $\|T(a)\|_2\le \|A\|_2$ (see Lemma \ref{lem:toeplitz}, part (ii)) and from $\|a\|_\infty=\|T(a)\|_2$, it follows that $\|a-a_k\|_\infty=\|T(a-a_k)\|_2\le\|A-A_k\|_2$ tends to 0 and so does $\|K-K_k\|_2$.

\end{proof}

\subsection{Quasi-Toeplitz matrices with symbols in the Wiener algebra}

If $a(z)=\sum_{\ell=-\infty}^{+\infty} a_\ell z^\ell$ is a continuous function then  $\lim_{n}|a_n|=0$, therefore, for an error bound $\epsilon>0$ the cardinality of the set 
$\mathcal{S}_\epsilon=\{n\in\mathbb Z:~ |a_n|<\epsilon\}$ 
is finite. This fact allows one to numerically approximate the function $a(z)$ to any precision in a finite number of operations.

Classical results of harmonic analysis relate the regularity of $a(z)$ with the decay of the Fourier coefficients to zero. A better regularity implies a faster convergence to zero of $\{a_n\}_n$ and, in practice, this allows one to approximate $a(z)$ by using fewer coefficients.

We will consider Toeplitz matrices associated with functions having an analytic or an absolutely convergent Fourier series. The former situation is ideal since it implies an exponential decay to zero of $\{a_n\}_n$. 

We point out that if the function $a$ is just continuous then $\sum_{i=-\infty}^{+\infty}|a_i|$ is not necessarily finite so that $\|T(a)\|_\infty = \sum_{i=-\infty}^{+\infty}|a_i|$ (compare \cite[Theorem 1.14]{BG:book05}) is not bounded in general. 
This is a reason to determine conditions under which a function $f(a)$ for $a\in C_{2\pi}$, as well as the geometric mean of $a_i\in C_{2\pi}$, $i=1,\ldots,p$, have  absolutely summable coefficients or are analytic.

Another important related issue is to find out under which conditions a sequence of matrices $A_k\in \mathcal{QT}$ such that $\|A_k\|_\infty<\infty$  converges in the infinity norm to a limit $A\in\mathcal{QT}$ such that $\|A\|_\infty<\infty$.
In this section we provide tools to give an answer to these questions.

\bigskip

The set of functions $a\in L_{2\pi}^1$ with absolutely convergent Fourier series, namely
\[
	\W=\{f\in L_{2\pi}^1\,:\,f=\sum_{k\in\Z} a_k e^{\iunit k t},\,\sum_{k\in\Z}|a_k|<\infty\},
\]
with the norm $\|f\|_{\w}:=\sum_{k\in \Z} |a_k|$, is a Banach algebra, also known as the Wiener algebra \cite{BG:book00}.
A function $a\in\mathcal W$ is necessarily continuous, but it may fail to be differentiable, or even Lipschitz continuous.

We consider the convergence of a sequence in three broad classes of functions included in the Wiener algebra.

The first class is the set of $\alpha$-\Holder\ functions. A continuous function $\varphi:\Omega\to\C$, with $\Omega$ subset of $\R^n$ or $\C^n$, is said to be $\alpha$-\Holder\ on $\Omega$, with $0<\alpha \leq 1$, if
\[
	[\varphi]_\alpha := \sup_{\stackrel{x,y\in\Omega}{x\ne y}}\frac{|\varphi(x)-\varphi(y)|}{|x-y|^\alpha}
\]
is finite. We denote by $C_{2\pi}^{0,\alpha}$ the set of $\alpha$-\Holder\ functions on $\R$ with period $2\pi$, that is a Banach algebra with the norm
\[
\|\varphi\|_{C^{0,\alpha}}:=\|\varphi\|_\infty+[\varphi]_\alpha.
\]

The second class is the set of functions of bounded variation on $[0,2\pi]$, that is functions $f:[0,2	\pi]\to \C$, such that
\[
	V(f)=\sup_{\stackrel{N=1,2,\ldots}{x_0=0\le x_1\le \cdots\le x_N=2\pi}}\Bigl\{\sum_{i=0}^{N-1}|f(x_{i+1})-f(x_i)|\Bigr\}
\] 
is finite.

The last class is the set of absolutely continuous functions on $C_{2\pi}$, that is functions {$f:[0,2\pi]\rightarrow \mathbb{R}$ such that $f(2\pi)=f(0)$ and for any $\epsilon>0$, there exists $\delta>0$ such that 
\[
\sum_{i=1}^n|f(y_i)-f(x_i)|<\epsilon,
\]
whenever $\{[x_i,y_i]: i=1,\ldots, n\}$ is a finite collection of mutually disjoint subintervals of $[0,2\pi]$ with $\sum_{i=1}^n(y_i-x_i)<\delta$. 
}

\bigskip
Relations between $\W$ and these three classes are stated in the following summary of well-known results, that gives estimates of the norm $\|\cdot\|_{\w}$.

\begin{theorem}\label{thm:bernstein}
{Let $a\in C_{2\pi}$, we have }
\begin{enumerate}
\item[{\rm (i)}]
If $a\in C_{2\pi}^{0,\alpha}$, with $\alpha\in(1/2,1]$, then $a\in\mathcal W$. Moreover, there exists a constant $\gamma_a$ such that
\begin{equation}\label{eq:hoelder}
\|a\|_{\w}\le \gamma_a \|a\|_{C^{0,\alpha}}.
\end{equation}
\item[{\rm (ii)}]
If $a\in C_{2\pi}^{0,\alpha}$, with $\alpha\in(0,1]$ and of bounded variation $V(a)$ on $[0,2\pi]$, then $a\in\mathcal W$ and there exists a constant $\gamma_b$ such that
{
\begin{equation}\label{eq:holder+bv}
\|a\|_{\w}\le \gamma_b \bigl(|a_0|+V(a)^{1/2}\sum_{\ell=1}^\infty (\pi2^{-\ell})^{\alpha/2}\bigr).
\end{equation}
}
\item[{\rm (iii)}] If $a\in C_{2\pi}$ is absolutely continuous on $[0,2\pi]$ and $a'\in L_{2\pi}^2$, then $a\in\mathcal W$ and {there exists a constant $\gamma_c$ such that }
{
\begin{equation}\label{ab+l2}
\|a\|_{\w}\le \gamma_c\bigl(\|a\|_{L^1}+\|a'\|_{L^2}\bigr).
\end{equation}
}
\end{enumerate}
\end{theorem}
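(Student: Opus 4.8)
The three statements are classical results of Fourier analysis, and I would prove all of them by the same mechanism: a dyadic (Littlewood--Paley) splitting of the spectrum $\{n\in\Z\}$ into blocks $B_\ell=\{n:2^\ell\le|n|<2^{\ell+1}\}$, combined with the Cauchy--Schwarz inequality applied block by block. The point is that on each block one has $\sum_{n\in B_\ell}|a_n|\le |B_\ell|^{1/2}\bigl(\sum_{n\in B_\ell}|a_n|^2\bigr)^{1/2}$ with $|B_\ell|\simeq 2^\ell$, so the regularity of $a$ enters only through an upper bound on the block $\ell^2$-mass $\sum_{n\in B_\ell}|a_n|^2$, which I obtain from Parseval's identity. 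Summing the resulting geometric-type bounds over $\ell$ yields the stated estimates, the coefficient $a_0$ always being accounted for separately as the harmless lowest frequency.

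I would begin with part (iii), which is the simplest. Since $a$ is absolutely continuous with $a(0)=a(2\pi)$ and $a'\in L^2_{2\pi}$, integration by parts gives $\widehat{a'}(n)=\iunit n\,a_n$, and Parseval's identity yields $\sum_{n\in\Z}n^2|a_n|^2=\frac1{2\pi}\|a'\|_{L^2}^2<\infty$. A single application of Cauchy--Schwarz then bounds
\[
\sum_{n\ne0}|a_n|=\sum_{n\ne0}\frac1{|n|}\,|n|\,|a_n|\le\Bigl(\sum_{n\ne0}\frac1{n^2}\Bigr)^{1/2}\Bigl(\sum_{n\ne0}n^2|a_n|^2\Bigr)^{1/2},
\]
and since $\sum_{n\ne0}n^{-2}=\pi^2/3$ and $|a_0|\le\frac1{2\pi}\|a\|_{L^1}$, adding the two contributions gives \eqref{ab+l2} with a suitable $\gamma_c$.

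For part (i), the classical Bernstein theorem, I would relate the block $\ell^2$-mass to a modulus of continuity. By Parseval, for every $h$ one has $\|a(\cdot+h)-a\|_{L^2}^2=\sum_{n\in\Z}|e^{\iunit nh}-1|^2|a_n|^2$. Choosing $h\simeq2^{-\ell}$ makes $|e^{\iunit nh}-1|$ bounded below by a positive constant uniformly for $n\in B_\ell$, so that $\sum_{n\in B_\ell}|a_n|^2\le C\,\|a(\cdot+h)-a\|_{L^2}^2\le C[a]_\alpha^2\,2^{-2\ell\alpha}$, using the H\"older bound $\|a(\cdot+h)-a\|_\infty\le[a]_\alpha|h|^\alpha$. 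Cauchy--Schwarz over the block then gives $\sum_{n\in B_\ell}|a_n|\le C[a]_\alpha\,2^{\ell(1/2-\alpha)}$, and the geometric series $\sum_\ell 2^{\ell(1/2-\alpha)}$ converges exactly when $\alpha>1/2$, yielding \eqref{eq:hoelder}.

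Part (ii) is the heart of the matter, and where I expect the only genuine difficulty. Here the exponent $1/2-\alpha$ above is nonnegative, so the bound of part (i) fails and the gain must come from bounded variation. The key idea is to estimate the block $\ell^2$-mass through the interpolation $\|g\|_{L^2}^2\le\|g\|_\infty\,\|g\|_{L^1}$ applied to $g=a(\cdot+h)-a$: the H\"older hypothesis controls $\|g\|_\infty\le[a]_\alpha|h|^\alpha$, while bounded variation controls the $L^1$ modulus by $\int_0^{2\pi}|a(t+h)-a(t)|\,dt\le|h|\,V(a)$. With $h=\pi2^{-\ell}$ and the same choice making $|e^{\iunit nh}-1|$ bounded below on $B_\ell$, this produces $\sum_{n\in B_\ell}|a_n|^2\le C\,V(a)\,(\pi2^{-\ell})^{\alpha}\,2^{-\ell}$; Cauchy--Schwarz over the $\simeq2^\ell$ frequencies of the block then gives $\sum_{n\in B_\ell}|a_n|\le C\,V(a)^{1/2}\,(\pi2^{-\ell})^{\alpha/2}$, and summing over $\ell$ reproduces exactly the right-hand side of \eqref{eq:holder+bv}. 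The delicate points I would verify carefully are the two inequalities on which everything hinges: the variation estimate $\int_0^{2\pi}|a(t+h)-a(t)|\,dt\le|h|V(a)$ (proved by writing $a(t+h)-a(t)=\int_{(t,t+h]}da$ and applying Fubini to the total-variation measure) and the interpolation step itself, which trades the failing exponent $\alpha$ of Bernstein's theorem for $\alpha/2$, thereby yielding a series that converges for every $\alpha\in(0,1]$.
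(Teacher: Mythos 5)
Your proposal is essentially the classical Bernstein--Zygmund--Katznelson argument, which is exactly what the paper relies on: the paper cites parts (i) and (iii) to the literature and, for part (ii), sketches the same modulus-of-continuity technique from Zygmund (pp.~241--242) that you reconstruct via dyadic blocks, Parseval applied to $a(\cdot+h)-a$, the $L^\infty$--$L^1$ interpolation of the $L^2$ norm, and block-wise Cauchy--Schwarz. So the route is the same; you have simply written out what the paper invokes by reference.

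One detail to fix in part (ii): with the block $B_\ell=\{n: 2^\ell\le |n|<2^{\ell+1}\}$ and the choice $h=\pi 2^{-\ell}$, one has $|n|h\in[\pi,2\pi)$, so $|e^{\iunit nh}-1|=2|\sin(nh/2)|$ degenerates as $|n|h\to 2\pi$ and is \emph{not} bounded below uniformly on the block. The standard remedy (as in Zygmund) is to pair the block $2^{\ell-1}<|n|\le 2^{\ell}$ with $h=\pi 2^{-\ell-1}$, so that $|n|h\in(\pi/4,\pi/2]$ and the lower bound holds, while $\omega(2h)=\omega(\pi 2^{-\ell})$ still produces the factor $(\pi 2^{-\ell})^{\alpha/2}$ appearing in \eqref{eq:holder+bv}; alternatively, shrink $h$ by a fixed factor. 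The same caveat applies in principle to part (i), though there your choice $h\simeq 2^{-\ell}$ leaves room for the correct constant. With that adjustment the argument is complete.
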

\begin{proof}
{ Concerning part (i), the} statement about $\alpha$-\Holder\ functions with $\alpha\in(1/2,1]$  is a classical result by Bernstein \cite{bernstein}, while the complete proof when $0<\alpha<1$ can {be found in \cite{Yitzhak} and the case where $\alpha=1$ can be proved analogously. }

{Concerning part (iii), the} statement about the absolute continuous function with derivative in $L_{2\pi}^2$ follows from  { \cite[Theorem I.6.2]{Yitzhak}}.

{ It is left to prove  part (ii).} If $a\in  C_{2\pi}^{0,\alpha}$ with $\alpha\in(0,1]$, and of bounded variation on $[0,2\pi]$, then by 
 \cite[Theorem I.6.4]{Yitzhak} (see also \cite[p.241]{zygmund}), it follows that $a\in\mathcal W$. We just need to prove the {inequality \eqref{eq:holder+bv}.}
	
{
	Denote by $\omega(\delta)=\omega(\delta,a)=\sup|a(t_1)-a(t_2)|$ for $t_1,t_2\in [0,2\pi]$ and $|t_1-t_2|\leq \delta$. 
	Applying the technique as in \cite[p.241-242]{zygmund}, we get
\begin{equation}\nonumber
\sum_{k=1}^{\infty}|a_k|\leq \frac{1}{2}\sum_{\ell=1}^{\infty}\bigl(\omega(\pi2^{-\ell})\bigr)^{1/2}V(a)^{1/2} \leq \frac{\sqrt{\gamma_1}}{2}\sum_{\ell=1}^{\infty}(\pi2^{-\ell})^{\alpha/2}V(a)^{1/2},
\end{equation}
 where the last inequality holds since $\omega(\delta)\leq r_1\delta^{\alpha}$ for a constant $r_1$ independent of $\delta$ if $a\in C^{0,\alpha}_{2\pi}$.  
It can be proved analogously that 
\begin{equation}\nonumber
\sum_{k=1}^{\infty}|a_{-k}|\leq \frac{\sqrt{\gamma_1}}{2}\sum_{\ell=1}^{\infty}(\pi2^{-\ell})^{\alpha/2}V(a)^{1/2}.
\end{equation}
 The proof is completed by choosing $\gamma_b=\max\{1,\sqrt{\gamma_1}\}$. 
}
\end{proof}

Note that, in particular, Theorem \ref{thm:bernstein} implies that a Lipschitz continuous function with period $2\pi$ belongs to $\W$.

\bigskip

We are interested in fractional powers and means of functions in the Wiener algebra. In the case of analytic functions there is an interesting result due to L\'evy \cite{levy}.
\begin{theorem}\label{thm:levy}
Let $a\in\W$ and let $f$ be a complex function, analytic in the range of $a$, then $f\circ a\in\W$.
\end{theorem}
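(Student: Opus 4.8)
The plan is to read this statement as an instance of the holomorphic (Riesz--Dunford) functional calculus in the commutative Banach algebra $\W$, where the only genuinely nontrivial input is the classical theorem of Wiener identifying the spectrum of $a$ with its range. First I would record the setup: $\W$ is a commutative Banach algebra with unit (the constant function $1$), and since $a$ is continuous the range $a([0,2\pi])$ is a compact subset of $\C$. The hypothesis that $f$ is analytic in the range of $a$ I read as: $f$ is holomorphic on some open neighborhood $U\supset a([0,2\pi])$.

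The crucial step is Wiener's theorem: for $\la\in\C$ the element $\la-a\in\W$ (meaning $\la$ times the unit minus $a$) is invertible in $\W$ if and only if $\la\notin a([0,2\pi])$; equivalently, the spectrum of $a$ as an element of $\W$ equals its range. The nontrivial direction, that a nowhere-vanishing element of $\W$ has its reciprocal in $\W$, is precisely Wiener's $1/a$ theorem, which I would cite. This is where I expect the main obstacle to sit, since everything downstream is formal. Granting it, the resolvent $R(\la):=(\la-a)^{-1}$ is a well-defined $\W$-valued map on the open set $\C\setminus a([0,2\pi])$, and by the standard resolvent identity it depends analytically, in particular continuously, on $\la$ there.

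Next I would choose a cycle $\Gamma\subset U\setminus a([0,2\pi])$ with winding number $1$ about every point of $a([0,2\pi])$ and winding number $0$ about every point of $\C\setminus U$, and define the $\W$-valued contour integral
\[
	f(a):=\frac{1}{2\pi\iunit}\oint_\Gamma f(\la)\,R(\la)\,d\la.
\]
Because the integrand is a continuous $\W$-valued function on the compact contour $\Gamma$ and $\W$ is complete, this Riemann integral converges to an element of $\W$; hence $f(a)\in\W$.

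Finally I would identify $f(a)$ with $f\circ a$ as functions. For each fixed $t\in[0,2\pi]$, evaluation $g\mapsto g(t)$ is a bounded (hence continuous) algebra homomorphism $\W\to\C$, with $|g(t)|\le\|g\|_{\w}$, and inversion in $\W$ is pointwise inversion, so $R(\la)(t)=(\la-a(t))^{-1}$. Interchanging evaluation with the norm-convergent integral and applying Cauchy's formula then gives
\[
	f(a)(t)=\frac{1}{2\pi\iunit}\oint_\Gamma\frac{f(\la)}{\la-a(t)}\,d\la=f(a(t)),
\]
since $\Gamma$ winds once around $a(t)\in a([0,2\pi])$. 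Therefore $f\circ a=f(a)\in\W$, which is the claim. The whole argument is really just the functional calculus; the one place where the special structure of $\W$ enters decisively is the invertibility criterion of the second paragraph.
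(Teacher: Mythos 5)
The paper states this result without proof, citing it directly to L\'evy, so there is no internal argument to compare against; your task was therefore to supply a proof from scratch, and the one you give is correct. It is the standard Gelfand-theoretic proof of the Wiener--L\'evy theorem: you correctly isolate the one non-formal ingredient, namely Wiener's $1/a$ theorem identifying the spectrum of $a$ in the Banach algebra $\W$ with its range $a([0,2\pi])$, and everything else (analyticity of the resolvent, convergence of the $\W$-valued Riemann integral over a compact cycle by completeness, commuting the evaluation homomorphisms $g\mapsto g(t)$ past the integral, and Cauchy's formula to identify $f(a)$ with $f\circ a$ pointwise) is the usual holomorphic functional calculus and is carried out accurately. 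Two small points worth making explicit if you wrote this up in full: the existence of a cycle $\Gamma\subset U\setminus a([0,2\pi])$ with the stated winding numbers is a standard but citable fact about compact subsets of open sets, and the final identification uses that an element of $\W$ is determined by its pointwise values (immediate since $\W\subset C_{2\pi}$ and the Fourier coefficients are determined by the function). Neither is a gap.
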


Clearly, if both functions $a$ and $f$ are analytic then $f\circ a$ is analytic.
A simpler result that we will use in the following is related to \Holder\ functions.
\begin{lemma}\label{thm:lemmaB}
Let $a:\Omega\to \R$ be $\alpha$-\Holder\ and let $f\in C^1(\mathcal I)$ where $\mathcal I$ is a closed interval containing the range of $a$. Then $f\circ a$ is $\alpha$-\Holder\ and
\[
	[f\circ a]_\alpha \le
\|f'\|_{\infty,\mathcal I}
[a]_\alpha.
\]
Moreover $\|f\circ a\|_{C^{0,\alpha}}\le \gamma (\|f\|_{\infty,\mathcal I}+\|f'\|_{\infty,\mathcal I})$, where $\gamma=\max\{[a]_\alpha,1\}$.
\end{lemma}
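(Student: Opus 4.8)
The plan is to treat the two assertions in turn, with the mean value theorem doing the real work. First I would fix $x,y\in\Omega$ with $x\ne y$ and apply the one-dimensional mean value theorem to $f$ on the segment joining $a(x)$ and $a(y)$. Since $\mathcal I$ is a closed interval containing the range of $a$, both $a(x)$ and $a(y)$ lie in $\mathcal I$, and by convexity of intervals so does every point between them. Hence there is a point $\xi\in\mathcal I$ with $f(a(x))-f(a(y))=f'(\xi)\,(a(x)-a(y))$, which yields
\[
|f(a(x))-f(a(y))|\le \|f'\|_{\infty,\mathcal I}\,|a(x)-a(y)|\le \|f'\|_{\infty,\mathcal I}\,[a]_\alpha\,|x-y|^\alpha .
\]
Dividing by $|x-y|^\alpha$ and taking the supremum over $x\ne y$ gives at once that $f\circ a$ is $\alpha$-\Holder\ and the claimed seminorm bound $[f\circ a]_\alpha\le \|f'\|_{\infty,\mathcal I}[a]_\alpha$.

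For the second assertion I would expand the norm according to its definition as $\|f\circ a\|_{C^{0,\alpha}}=\|f\circ a\|_\infty+[f\circ a]_\alpha$. The sup-norm term is controlled by $\|f\circ a\|_\infty=\sup_{x}|f(a(x))|\le\|f\|_{\infty,\mathcal I}$, again because $a(x)\in\mathcal I$ for every $x$, while the seminorm term is controlled by the first part. Combining these gives $\|f\circ a\|_{C^{0,\alpha}}\le \|f\|_{\infty,\mathcal I}+[a]_\alpha\|f'\|_{\infty,\mathcal I}$. Finally, since $\gamma=\max\{[a]_\alpha,1\}$ satisfies both $\gamma\ge1$ and $\gamma\ge[a]_\alpha$, I would bound $\|f\|_{\infty,\mathcal I}\le\gamma\|f\|_{\infty,\mathcal I}$ and $[a]_\alpha\|f'\|_{\infty,\mathcal I}\le\gamma\|f'\|_{\infty,\mathcal I}$, and adding the two estimates yields the stated inequality $\|f\circ a\|_{C^{0,\alpha}}\le \gamma(\|f\|_{\infty,\mathcal I}+\|f'\|_{\infty,\mathcal I})$.

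There is no genuine obstacle here; the only point requiring a moment's care is the justification that the intermediate value $\xi$ produced by the mean value theorem lies in $\mathcal I$, so that the estimate $|f'(\xi)|\le\|f'\|_{\infty,\mathcal I}$ is legitimate. This is precisely where the hypothesis that $\mathcal I$ is a closed \emph{interval} containing the range of $a$ (rather than an arbitrary set) enters: it guarantees convexity, and hence that the entire segment between $a(x)$ and $a(y)$ remains inside $\mathcal I$.
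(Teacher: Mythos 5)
Your proof is correct and follows essentially the same route as the paper's: the mean value theorem on the segment between $a(x)$ and $a(y)$ gives the seminorm bound, and the second inequality follows by expanding the $C^{0,\alpha}$-norm and absorbing constants into $\gamma$. Your treatment is slightly more explicit than the paper's (which splits the difference quotient into two factors and leaves the final norm estimate to the reader), but there is no substantive difference.
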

\begin{proof}
If $x,y\in\Omega$ are such that $a(x)\ne a(y)$, then, by the mean value theorem,
\[
    \frac{|f(a(x))-f(a(y))|}{|x-y|^\alpha}=
    \frac{|f(a(x))-f(a(y))|}{|a(x)-a(y)|}
    \frac{|a(x)-a(y)|}{|x-y|^\alpha}\le \|f'\|_{\infty,\mathcal I}[a]_\alpha.
\]
The bound holds also when $a(x)=a(y)$ and we have $[f\circ a]_\alpha\le \|f'\|_{\infty,\mathcal I}[a]_\alpha$.

The latter inequality, follows from $\|f\circ a\|_{C^{0,\alpha}}=\|f\circ a\|_\infty+[f\circ a]_\alpha$.
\end{proof}

As a consequence we have.
\begin{corollary}\label{thm:h} If $a(t)\in\mathcal W$ is such that $a(t)>0$ then $a(t)^{1/p}\in\mathcal W$. If in addition $a(t)\in C_{2\pi}^{0,\alpha}$  for some $\alpha>0$ then $a^{1/p}\in C_{2\pi}^{0,\alpha}$ and $[a^{1/p}]_\alpha\le\frac{1}{p}\frac{[a]_\alpha}{\min a(t)^{1-1/p}}$.
\end{corollary}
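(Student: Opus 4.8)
The plan is to obtain both assertions by specializing the two composition results already established—L\'evy's theorem (Theorem \ref{thm:levy}) for membership in $\W$, and Lemma \ref{thm:lemmaB} for the \Holder\ estimate—to the scalar function $f(x)=x^{1/p}$. The single observation that makes everything work is that a positive function in $\W$ is continuous and $2\pi$-periodic, hence attains a strictly positive minimum $m:=\min_t a(t)>0$ and a finite maximum $M:=\max_t a(t)$ on the compact set $[0,2\pi]$. Consequently the range of $a$ is contained in the closed interval $\mathcal I=[m,M]\subset(0,\infty)$, on which $f(x)=x^{1/p}$ is analytic, and in particular $C^1$.

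For the first claim, I would note that $f(x)=x^{1/p}$ is analytic on $(0,\infty)$, hence analytic on a neighborhood of the range of $a$. Since $a\in\W$, Theorem \ref{thm:levy} immediately yields $a^{1/p}=f\circ a\in\W$.

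For the second claim, assume in addition $a\in C_{2\pi}^{0,\alpha}$ and apply Lemma \ref{thm:lemmaB} with this $f$ and the interval $\mathcal I=[m,M]$. Since $f'(x)=\frac 1p x^{1/p-1}=\frac 1p x^{-(1-1/p)}$ and $1-1/p\ge 0$ for $p\ge 1$, the map $x\mapsto x^{-(1-1/p)}$ is nonincreasing on $\mathcal I$, so its supremum there is attained at the left endpoint, giving $\|f'\|_{\infty,\mathcal I}=\frac 1p m^{-(1-1/p)}=\frac 1p (\min_t a(t))^{-(1-1/p)}$. Lemma \ref{thm:lemmaB} then shows that $a^{1/p}$ is $\alpha$-\Holder\ with
\[
   [a^{1/p}]_\alpha \le \|f'\|_{\infty,\mathcal I}\,[a]_\alpha
   = \frac 1p\,\frac{[a]_\alpha}{(\min_t a(t))^{1-1/p}},
\]
which is exactly the stated bound; combined with the $2\pi$-periodicity inherited from $a$, this gives $a^{1/p}\in C_{2\pi}^{0,\alpha}$.

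I do not expect a serious obstacle here, as the whole argument is a direct application of the two general composition lemmas. The only step deserving genuine care is the compactness remark that $\min_t a(t)>0$: this is precisely what keeps the range of $a$ away from the singularity of $x^{1/p}$ at the origin, ensuring $f$ is analytic and $C^1$ on a neighborhood of that range. The remaining work—identifying $\|f'\|_{\infty,\mathcal I}$ with its value at the endpoint $m$ via the elementary monotonicity of $x^{-(1-1/p)}$—is routine.
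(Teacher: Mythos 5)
Your proof is correct and follows essentially the same route as the paper's: L\'evy's theorem (Theorem \ref{thm:levy}) for membership in $\W$, and Lemma \ref{thm:lemmaB} applied to $f(x)=x^{1/p}$ on the interval $\mathcal I=[\min a,\max a]$ with $\|f'\|_{\infty,\mathcal I}=\frac{1}{p}(\min_t a(t))^{-(1-1/p)}$. The only difference is that you spell out the compactness argument for $\min_t a(t)>0$ and the monotonicity of $f'$, which the paper leaves implicit.
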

\begin{proof}
Since $f(z)=z^{1/p}$ is analytic in the range of $a$, by Theorem \ref{thm:levy}, $a^{1/p}\in\W$. Let $\mathcal I=[\min a(t),\max a(t)]$ be a closed interval of positive numbers enclosing the range of $a$, then $f\in C^1(\mathcal I)$ and we can apply Lemma \ref{thm:lemmaB}, using $\|f'\|_{\infty,\mathcal I}=\frac{1}{p\min a(t)^{1-1/p}}$.
\end{proof}

Similarly, we have the following result.

\begin{lemma}\label{thm:lemmaB2}

{Let $a: [0, 2\pi]\rightarrow \mathbb R$ be a \rv\ function  and let $f\in C^1(\mathcal I)$ where $\mathcal I$ is a closed interval containing the range of $a$. 
\begin{enumerate}
\item[\rm (i)] If $a$ is absolutely continuous and $a'\in L_{2\pi}^2$, then  $f\circ a$ is absolutely continuous and with derivative in $L_{2\pi}^2$. Moreover, $\|(f\circ a)'\|_{L^2}\leq \|f'\|_{\infty,\mathcal I}\|a'\|_{L^2}$.

\item[\rm (ii)] If $a$ is of bounded variation on $[0,2\pi]$, then $f\circ a$ is of bounded variation on $[0,2\pi]$ and, moreover, $V(f\circ a)\le \|f'\|_{\infty,\mathcal I} V(a)$.
\end{enumerate}
}
\end{lemma}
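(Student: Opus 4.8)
The plan is to run both parts off a single elementary observation. Since $f\in C^1(\mathcal I)$ and $\mathcal I$ is compact, $f'$ is bounded on $\mathcal I$, so by the mean value theorem $f$ is Lipschitz on $\mathcal I$ with constant $\|f'\|_{\infty,\mathcal I}$, i.e. $|f(u)-f(v)|\le \|f'\|_{\infty,\mathcal I}\,|u-v|$ for all $u,v\in\mathcal I$. Because the range of $a$ lies in $\mathcal I$, this bound applies to every pair of values $f(a(s)),f(a(t))$, and it is the only structural input needed from $f$.

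I would dispatch part (ii) first, as it is immediate. For any partition $0=x_0\le x_1\le\cdots\le x_N=2\pi$, the Lipschitz bound gives
\[
\sum_{i=0}^{N-1}|f(a(x_{i+1}))-f(a(x_i))|\le \|f'\|_{\infty,\mathcal I}\sum_{i=0}^{N-1}|a(x_{i+1})-a(x_i)|\le \|f'\|_{\infty,\mathcal I}\,V(a).
\]
Taking the supremum over all partitions shows that $f\circ a$ has bounded variation and that $V(f\circ a)\le \|f'\|_{\infty,\mathcal I}\,V(a)$.

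For part (i), I would proceed in two stages: first establish that $f\circ a$ is absolutely continuous, then identify its derivative and estimate it in $L^2$. Absolute continuity follows again from the Lipschitz bound: given $\epsilon>0$, apply the absolute continuity of $a$ with tolerance $\epsilon/\|f'\|_{\infty,\mathcal I}$ to obtain $\delta>0$; then for any finite family of disjoint subintervals of total length less than $\delta$ one has $\sum_i|f(a(y_i))-f(a(x_i))|\le \|f'\|_{\infty,\mathcal I}\sum_i|a(y_i)-a(x_i)|<\epsilon$, and the endpoint condition $f(a(2\pi))=f(a(0))$ is inherited from $a(0)=a(2\pi)$. Next, since $a$ is absolutely continuous it is differentiable almost everywhere, and at every point $t$ of differentiability the ordinary chain rule applies verbatim (because $f$ is differentiable at $a(t)$), giving $(f\circ a)'(t)=f'(a(t))\,a'(t)$. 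Hence $|(f\circ a)'(t)|\le \|f'\|_{\infty,\mathcal I}\,|a'(t)|$ a.e., and squaring and integrating yields
\[
\|(f\circ a)'\|_{L^2}^2=\int_0^{2\pi}|f'(a(t))|^2\,|a'(t)|^2\,dt\le \|f'\|_{\infty,\mathcal I}^2\,\|a'\|_{L^2}^2,
\]
so that $(f\circ a)'\in L_{2\pi}^2$ with $\|(f\circ a)'\|_{L^2}\le \|f'\|_{\infty,\mathcal I}\,\|a'\|_{L^2}$.

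The only delicate point is the justification of the chain rule almost everywhere for the composition of a $C^1$ function with an absolutely continuous one; this is where I would be explicit rather than wave hands. The subtlety that arises for a merely absolutely continuous outer function disappears here because $f$ is differentiable everywhere on $\mathcal I$, so the classical chain rule holds at each of the full-measure set of points where $a'$ exists. Establishing absolute continuity of $f\circ a$ beforehand is what guarantees that this a.e.\ derivative is genuinely the derivative of $f\circ a$ and is integrable, making the $L^2$ estimate rigorous.
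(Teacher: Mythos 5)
Your proof is correct and, as far as the quantitative content goes, takes essentially the same route as the paper: part (ii) is the same partition-sum estimate, and part (i) ends with the same pointwise bound $|(f\circ a)'|\le\|f'\|_{\infty,\mathcal I}\,|a'|$ a.e.\ followed by integration. The one genuine difference is in how the qualitative facts are obtained: the paper simply cites a composition theorem (Appell, Theorem 5.10, parts (d) and (e)) to conclude that $f\circ a$ is absolutely continuous, respectively of bounded variation, whereas you derive both directly from the Lipschitz bound $|f(u)-f(v)|\le\|f'\|_{\infty,\mathcal I}|u-v|$ supplied by the mean value theorem. Your version is more self-contained and arguably cleaner, at the modest cost of having to justify the a.e.\ chain rule yourself, which you do correctly: $a$ is differentiable a.e., $f$ is differentiable at every point of $\mathcal I$, and the prior verification that $f\circ a$ is absolutely continuous is exactly what legitimizes treating this a.e.\ derivative as \emph{the} derivative in the $L^2$ estimate. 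The only cosmetic gap is the division by $\|f'\|_{\infty,\mathcal I}$ in your $\epsilon$--$\delta$ argument, which requires the trivial remark that the case $\|f'\|_{\infty,\mathcal I}=0$ (i.e.\ $f$ constant) is immediate.
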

\begin{proof}
 { Concerning part (i), it follows from \cite[Theorem 5.10,  part (d)]{Appell} that $f\circ a$ is absolutely continuous. Observe that $(f\circ a)'=(f'\circ a)a'$ and 
\[
\Bigl(\int_{0}^{2\pi}|(f\circ a)'|^2 dt\Bigr)^{1/2}\leq \Bigl(\int_{0}^{2\pi}\|f'\circ a\|_{\infty}^2|a'|^2 dt\Bigr)^{1/2}=\|f'\|_{\infty, \mathcal I}\|a'\|_{L^2}<\infty,
\]
which implies that $(f\circ a)' \in L_{2\pi}^2$ and $\|(f\circ a)'\|_{L^2}\leq \|f'\|_{\infty,\mathcal I}\|a'\|_{L^2}$.

}

{Concerning part (ii), a direct consequence of \cite[Theorem 5.10,  part (e)]{Appell} shows that $f\circ a$ is of bounded variation.}

Consider a partition $x_0=0\le x_1\le \ldots\le x_\ell=2\pi$, { we have} 
\[
 \begin{split}\label{bv1}
\sum_{j=0}^{\ell-1}|f(a(x_{j+1})) &-f(a(x_j))| =\sum_{\underset{a(x_j)\ne a(x_{j+1})}{j=0}}^{\ell-1}\frac{|f(a(x_{j+1}))-f(a(x_j))|}{|a(x_{j+1})-a(x_j)|} |a(x_{j+1})-a(x_j)|\\
    &\leq \sup_{\underset{x\ne y}{x,y\in \mathcal I}}\frac{|f(x)-f(y)|}{|x-y|} \sum_{j=0}^{\ell-1} |a(x_{j+1})-a(x_j)|\leq \|f'\|_{\infty,\mathcal I}V(a).
\end{split}
\]
Taking the supremum over all partitions, we get the desired inequality.
\end{proof}

\section{Geometric means of \QTmm}\label{sec:planar}

We start this section by  recalling  a  general construction of the ALM mean and the NBMP mean of $p$ $(p\geq 3)$ positive definite operators.  

Denote by $G_t(A,B)$, the weighted geometric mean $A\#_{t} B$, with weight $t\in [0,1]$.  Given the $(p-1)$-tuple $(s_1,s_2,\ldots, s_{p-1})$  with $s_i\in [0,1]$  and positive definite operators $A_1,\ldots, A_p$, the sequences  generated by
\begin{equation}\label{eq:almbmp}
A_i^{(k+1)}=A_i^{(k)}\#_{s_1}G_{s_2,\ldots,s_{p-1}}(A_1^{(k)},\ldots,A_{i-1}^{(k)},A_{i+1}^{(k)},\ldots,A_p^{(k)}), \ i=1,\ldots, p,
\end{equation}
with $A_i^{(0)}=A_i$, can be recursively defined, and they converge to a common limit $G_{s_1,\ldots,s_{p-1}}$ \cite{Hosoo}.

With the choice $(s_1,\ldots,s_{p-2},s_{p-1})=(1,\ldots,1,1/2)$ one obtains the ALM mean \cite{alm} and with the choice $(s_1,s_2,\ldots, s_{p-1})=((p-1)/p, (p-2)/(p-1), \ldots, 1/2)$ one obtains the NBMP mean. We call the corresponding iterations the ALM iteration and the NBMP iteration, respectively. The latter construction for positive definite matrices can be found  in \cite{bmp,nakamura}, while for positive definite operators  the convergence of the sequences in Thompson metric was proved in  \cite{nakamura}.

A similar inductive construction has been introduced in \cite{lll-12}. Given a probability vector 
$w=(w_i)\in\mathbb R^p$, i.e., such that $w_i>0$ and $\sum_{i=1}^p w_i=1$, define the following sequence
\begin{equation}\label{eq:weighted}
A_i^{(k+1)}=A_i^{(k)}\#_{1-w_i} G_{\hat w^{(i)}}(A_1^{(k)},\ldots,A_{i-1}^{(k)},A_{i+1}^{(k)},\ldots,A_p^{(k)}),\quad i=1,\ldots,p,
\end{equation}
where $\hat w^{(i)}=\frac1{1-w_i}(w_1,\ldots,w_{i-1},w_{i+1},\ldots,w_p)$ is again a probability vector, and $ G_{w_1,w_2}(A_1,A_2)=A_1\#_{w_2} A_2$.
If $\lim_k A_i^{(k)}$ exists and has the same value for every $i$, then we denote the common limit by 
$ G_{w}(A_1,\ldots,A_p)$ and refer to it as the weighted mean. It is proved in \cite{lll-12} that this limit exists in the Thompson metric. Observe that by choosing $w=\frac{1}{p}(1,1,\ldots,1)$, the weighted mean coincides with the NBMP mean.

In this section, we show that the ALM mean, the NBMP mean, the weighted mean and the Karcher mean of positive definite operators $A_1,\ldots, A_p$ such that  $A_i=T(a_i)+E_{i}\in \mathcal{QT}$ are also \QTmm. In fact, we prove that the sequences $\{A_i^{(k)}\}_k$ generated by \eqref{eq:almbmp} converge to a common limit $G\in \mathcal{QT}$ with symbol $g=(a_1\ldots a_p)^{\frac{1}{p}}$ and we have $\lim_k\|a_i^{(k)}-g\|_{\infty}=0$, where $a_i^{(k)}$ is the symbol of $A_i^{(k)}$.  
In the case where 
the symbols $a_i\in {\mathcal W}$, $i=1,\ldots,p$, we provide sufficient conditions under which  $a_i^{(k)}$ converges to $g$ in Wiener norm.
 
\subsection{ALM mean}\label{sec:alm}

The ALM sequences $\{A_i^{(k)}\}_{k=0}^{\infty}$ generated by \eqref{eq:almbmp} with 
$(s_1,\ldots,s_{p-2},s_{p-1})=(1,\ldots,1,1/2)$, converge to a common limit $G$ in the Thompson metric (see \cite[Remarks 4.2 and 6.5 and Theorem 4.3]{Hosoo}).
 This implies that $\lim_k\|A_i^{(k)}-G\|_2=0$ since the topology of the Thompson metric agrees with the relative operator norm topology \cite{thompson}.
 
We will prove that if the positive definite matrices $A_1,\ldots, A_p$, $p\ge 3$,  belong to $\mathcal{QT}$ then also the matrices $A_i^{(k)}$ of the ALM sequence generated by 
\eqref{eq:almbmp} as well as their limit $G$ belong to $\QT$.  Moreover, the symbol associated with the Toeplitz part of $G$ is the uniform limit of the symbols $a_i^{(k)}$ associated with the Toeplitz parts of $A_i^{(k)}$, which in turn are the functions obtained by applying the ALM construction to the symbols $a_i$ associated with the Toeplitz parts of the matrices $A_i$.

\begin{theorem}\label{thm:alm2norm}
Let $A_i=T(a_i)+E_i\in \mathcal{QT}$ be positive definite, for $i=1,\ldots, p$, with $p\ge 3$. The matrices $A_i^{(k)}$ generated by \eqref{eq:almbmp} for the ALM iteration, and the
ALM mean $G=G(A_1,\ldots,A_p)$ of $A_1,\ldots, A_p$, satisfy the following properties:
\begin{enumerate}
\item for any $k\ge 0$ and for $i=1,\ldots, p$, there exist $a_i^{(k)}\in C_{2\pi}$ and $K_i^{(k)}\in\mathcal K(\ell^2)$ such that $A_i^{(k)}=T(a_i^{(k)})+K_i^{(k)}$, that is $A_i^{(k)}\in\QT$;
\item there exist $g\in C_{2\pi}$ and $K_G\in\mathcal K(\ell^2)$ such that $G=T(g)+K_G$, that is, $G\in \mathcal{QT}$;
\item $\lim_k\|a_i^{(k)}-g\|_\infty=0$, $\lim_k\|K_i^{(k)}-K_G\|_2=0$, for $i=1,\ldots,p$;
\item the equation $a_i^{(k+1)} = a_i^{(k)}\#_{s_1}G_{s_2,\ldots,s_{p-1}}(a_1^{(k)},\ldots,a_{i-1}^{(k)},a_{i+1}^{(k)},\ldots,a_p^{(k)})$, is satisfied for $i=1,\ldots,p$, and for any $k$.\footnote{Here and in the proof of the theorem, we have $s_1=1$ and thus the notation could be simplified, but we prefer to keep $s_1$ to let the proof be used also to deal with the NBMP and weighted means.}
\end{enumerate}
\end{theorem}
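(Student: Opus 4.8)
The plan is to prove parts (1) and (4) together by a double induction---on the number $p$ of operators (governing the nested structure of the mean) and on the iteration index $k$---and then to read off parts (2) and (3) from the operator-norm convergence of the ALM sequence combined with Lemma~\ref{thm:lemma0}. Throughout, the key structural inputs are Corollary~\ref{cor:0}, which says that a two-operator weighted geometric mean of positive definite quasi-Toeplitz operators is again quasi-Toeplitz with the product-power symbol, and Lemma~\ref{lem:pos}, which turns positive definiteness of a quasi-Toeplitz operator into strict positivity of its symbol.

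For the induction on $p$ I would take $p=2$ as the base case, where the ALM mean reduces to the two-operator geometric mean $A_1\#A_2=A_1\#_{1/2}A_2$, quasi-Toeplitz with symbol $(a_1a_2)^{1/2}$ directly by Corollary~\ref{cor:0}. For the inductive step I assume the full four-part statement for $p-1$ operators. Since for the ALM parameters the inner mean $G_{s_2,\ldots,s_{p-1}}$ in \eqref{eq:almbmp} is exactly the ALM mean of the $p-1$ operators $A_j^{(k)}$, $j\ne i$, the inductive hypothesis makes this inner mean a positive definite quasi-Toeplitz operator whose symbol is $G_{s_2,\ldots,s_{p-1}}(a_1^{(k)},\ldots,a_{i-1}^{(k)},a_{i+1}^{(k)},\ldots,a_p^{(k)})$. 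Applying Corollary~\ref{cor:0} to $A_i^{(k+1)}=A_i^{(k)}\#_{s_1}(\text{inner mean})$ then gives $A_i^{(k+1)}\in\mathcal{QT}$ with symbol $(a_i^{(k)})^{1-s_1}$ times the $s_1$-th power of the inner symbol, which is precisely the scalar identity asserted in part (4).

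To keep Corollary~\ref{cor:0} applicable at every node of this recursion I must carry positive definiteness through the induction on $k$: starting from $A_i^{(0)}=A_i=T(a_i)+E_i\in\mathcal{QT}$, which is positive definite with $a_i>0$ by Lemma~\ref{lem:pos}, and using the standard fact that the weighted geometric mean of two positive definite operators (and the limit mean of positive definite operators) is positive definite, I obtain that every iterate $A_i^{(k)}$ is positive definite with strictly positive continuous symbol $a_i^{(k)}$. This inner induction on $k$ then yields parts (1) and (4) for all $k$ at level $p$.

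Parts (2) and (3) follow at once. The discussion preceding the theorem records that the ALM sequences converge to a common limit $G$ in the Thompson metric, hence $\lim_k\|A_i^{(k)}-G\|_2=0$ for each $i$. Because every $A_i^{(k)}$ lies in $\mathcal{QT}$, Lemma~\ref{thm:lemma0} gives simultaneously that $G=T(g)+K_G\in\mathcal{QT}$, that $\lim_k\|a_i^{(k)}-g\|_\infty=0$, and that $\lim_k\|K_i^{(k)}-K_G\|_2=0$; uniqueness of the quasi-Toeplitz decomposition forces $g$ and $K_G$ to be common to all $i$. I expect the main obstacle to be precisely the nested, recursively defined structure of the multi-operator mean: since $G_{s_2,\ldots,s_{p-1}}$ is itself only a limit of two-operator means rather than a single weighted mean, Corollary~\ref{cor:0} cannot be invoked directly, and one must route the entire argument through the induction on $p$, taking care to propagate positive definiteness---and hence positivity of the symbols---through both inductions so that Corollary~\ref{cor:0} and Lemma~\ref{lem:pos} apply at every stage.
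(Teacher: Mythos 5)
Your proposal is correct and follows essentially the same route as the paper: a double induction on $p$ and $k$ in which Corollary~\ref{cor:0} handles each two-operator weighted mean, the inductive hypothesis at level $p-1$ handles the inner mean $G_{s_2,\ldots,s_{p-1}}$, and parts (2) and (3) are obtained from the Thompson-metric (hence operator-norm) convergence together with Lemma~\ref{thm:lemma0}. The only cosmetic differences are that you anchor the induction at $p=2$ rather than $p=3$ and that you make explicit the propagation of positive definiteness (via Lemma~\ref{lem:pos}) needed to keep Corollary~\ref{cor:0} applicable, a point the paper leaves implicit.
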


\begin{proof}
 It is known from \cite{Hosoo} that  the sequences $\{A_i^{(k)}\}_{k=0}^{\infty}$ converge to $G$ in the Thompson metric and that $\lim_k\|A_i^{(k)}-G\|_2=0$.
We prove  parts 1--4 by induction on $p$.
If $p=3$, then 
\begin{equation}\label{eq:alm3}
\begin{aligned}
&A_1^{(k+1)}=A_1^{(k)}\#_{s_1}G(A_2^{(k)},A_3^{(k)}),\\
&A_2^{(k+1)}=A_2^{(k)}\#_{s_1}G(A_3^{(k)},A_1^{(k)}),\\
&A_3^{(k+1)}=A_3^{(k)}\#_{s_1}G(A_1^{(k)},A_2^{(k)}),\\
\end{aligned}
\end{equation}
Recall that if $A,B\in\QT$ then the geometric mean $G(A,B)\in\QT$, and the symbols $a,b,g$ associated with $A,B$ and $G(A,B)$, respectively, are such that $g=G(a,b)$ in view of Corollary \ref{cor:0}.

Using an induction argument on $k$, we show part 1 of the theorem i.e., $A_i^{(k)}\in\QT$ for $i\in\{1,2,3\}$. We have $A_i^{(0)}=A_i\in\QT$, and assuming $A_i^{(k)}\in\QT$, for $i\in\{1,2,3\}$, 
from \eqref{eq:alm3} and
Corollary \ref{cor:0}, we deduce that $A_i^{(k+1)}\in\QT$, for $i\in\{1,2,3\}$. Consequently, since $\lim_k\|A_i^{(k)}-G\|_2=0$, for $G=G(A_1,A_2,A_3)$,  then from Lemma \ref{thm:lemma0} we deduce part 2, i.e., $G\in\QT$ and that the symbol associated with the Toeplitz part of $A_i^{(k)}$
converges to $g$ uniformly and that the compact part of $A_i^{(k)}$ converges to the compact part of $G$ in norm, i.e., part 3.
Finally, since the symbol associated with $A_i^{(0)}$ is $a_i(z)$, we find that the symbol associated with $G(A_i^{(k)},A_j^{(k)})$ is $G(a_i^{(k)},a_j^{(k)})$ in view of Corollary \ref{cor:0}. Therefore, from \eqref{eq:alm3} and Corollary \ref{cor:0} we find that 
  { $a_i^{(k+1)} = a_i^{(k)}\#_{s_1}G(a_{i-1}^{(k)},a_{i+1}^{(k)})$ with $a_{0}^{(k)}:=a_3^{(k)}$ and $a_{4}^{(k)}:=a_{1}^{(k)}$.}  That is, part~4.
 
For the inductive step on $p$, assume $p>3$ and follow the same argument to prove that if 
parts 1--4 hold for the sequence generated by \eqref{eq:almbmp} starting from $p-1$ matrices $A_i\in\QT$, $i=1,\ldots,p-1$, then  they also hold for the sequence generated by \eqref{eq:almbmp} starting from $p$ matrices 
$A_i\in\QT$, $i=1,\ldots,p$.

To this end, consider equation \eqref{eq:almbmp} and use induction on $k$ to prove that $A_i^{(k)}\in\QT$ for $i=1,\ldots,p$. For $k=0$, clearly $A_i^{(0)}=A_i\in\QT$ by assumption.
  Concerning the inductive step on $k$, assume that $A_i^{(k)}\in\QT$ for $i=1,\ldots,p$, and deduce that $A_i^{(k+1)}\in\QT$ for $i=1,\ldots,p$.  By the inductive assumption on $k$ we have $A_i^{(k)}\in\QT$ so that by the inductive assumption on 
$p$, the matrix $G_{s_2,\ldots,s_{p-1}}(A_1^{(k)},\ldots,A_{i-1}^{(k)},A_{i+1}^{(k)},\ldots,A_p^{(k)})=G(A_1^{(k)}\ldots,A_{i-1}^{(k)},A_{i+1}^{(k)},\ldots,A_p^{(k)})$ belongs to $\QT$. Therefore, in view of Corollary \ref{cor:0} and \eqref{eq:almbmp} also $A_i^{(k+1)}$ is in $\QT$. That is part 1 of the theorem. Moreover, since $\lim_k\|A_i^{(k)}-G\|_2=0$ for $G=G(A_1,\ldots,A_p)$,  then from Lemma \ref{thm:lemma0} we deduce that $G\in\QT$ and that the symbol associated with the Toeplitz part of $A_i^{(k)}$ uniformly
converges to the symbol $g$ associated with the Toeplitz part of $G$, and that the compact part of $A_i^{(k)}$ converges to the compact part of $G$ in norm, i.e., parts 2 and 3 of the theorem.

Concerning part 4, we proceed by induction on $p$. We have already proved that for $p=3$ the property is satisfied. In order to prove the inductive step on $p$ we proceed by induction on $k$. For the initial step, i.e., for $k=0$, by Corollary \ref{cor:0} and by the inductive assumption on $p$, from \eqref{eq:alm3} we find that, $a_i^{(1)} = a_i^{(0)}\#_{s_1}G_{s_2,\ldots,s_{p-1}}(a_1^{(0)},\ldots,a_{i-1}^{(0)},a_{i+1}^{(0)},\ldots,a_p^{(0)})$. For the induction step on $k$, assume that part 4 is satisfied for $k$ and prove it for $k+1$. By the inductive hypotheses valid for $p-1$ matrices, we know that
\[
G_{s_2,\ldots,s_{p-1}}(A_1^{(k)},\ldots,A_{i-1}^{(k)},A_{i+1}^{(k)},\ldots,A_p^{(k)})\in\QT
\]
 and that its symbol is $G_{s_2,\ldots,s_{p-1}}(a_1^{(k)},\ldots,a_{i-1}^{(k)},a_{i+1}^{(k)},\ldots,a_p^{(k)})$. Therefore, by Corollary \ref{cor:0} and from \eqref{eq:almbmp}, the symbol associated with the Toeplitz part of  $A_i^{(k+1)}$ is $a_i^{(k+1)}= a_i^{(k)}\#_{s_1}G_{s_2,\ldots,s_{p-1}}(a_1^{(k)},\ldots,a_{i-1}^{(k)},a_{i+1}^{(k)},\ldots,a_p^{(k)})$.
\end{proof}

As a consequence of the above theorem we will show that the symbol $g$ associated with the Toeplitz part of $G$ is such that $g(z)=(a_1(z)\ldots a_p(z))^{\frac{1}{p}}$, where the symbols $a_i(z)$ take positive values in view of Lemma \ref{lem:pos} since $A_i$ are positive definite. In order to prove this representation of $g(z)$,
consider the sequences $a_i^{(k)}(z)$ defined by the ALM iteration, that is
\begin{equation}\label{eq:ak}
\begin{aligned}
&a_i^{(k+1)} = a_i^{(k)}\#_{s_1}G_{s_2,\ldots,s_{p-1}}(a_1^{(k)},\ldots,a_{i-1}^{(k)},a_{i+1}^{(k)},\ldots,a_p^{(k)}),\\
&a_i^{(0)}(z)=a_i(z),
\end{aligned}
\end{equation}
for $i=1,\ldots,p$, $k\ge 0$.
 It can be easily verified that
\begin{equation}\label{eq:alm}
a_i^{(k+1)}(z)=\left(\prod_{j=1,\, j\ne i}^p a_j^{(k)}(z)\right)^\frac{1}{p-1},\quad i=1,\ldots,p,\quad k=0,1,\ldots
\end{equation}
We have the following.

\begin{lemma}\label{th:11}
Let $a_1(z),\ldots,a_p(z)$ be continuous 
nonnegative 
functions and
let $a_i^{(k)}$, for $i=1,\ldots,p$ and $k=0,1,\ldots$, be the sequences defined by the ALM iteration \eqref{eq:ak}.
For $k=0,1,\ldots,$ we have
\[
a_i^{(k)}=a_i^{n_{k-1}}\prod_{j=1,\, j\ne i}^p a_j^{n_k},\qquad i=1,\ldots,p,
\]
where $n_k=\frac{1}{p}\bigl(1+\frac{(-1)^{k+1}}{( p-1)^k}\bigr)$. 
\end{lemma}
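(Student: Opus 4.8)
The plan is to prove the closed form by induction on $k$, working with the simplified recurrence \eqref{eq:alm} rather than the original weighted-mean recurrence \eqref{eq:ak}. Since each $a_i(z)$ is a nonnegative continuous function, everything below is a pointwise identity between nonnegative functions, for which the fractional powers and products are well defined and commute.

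First I would posit the symmetric ansatz that each iterate is a monomial in the original symbols whose exponents depend only on whether an index equals $i$:
\[
a_i^{(k)}=a_i^{d_k}\prod_{j=1,\, j\ne i}^p a_j^{o_k},\qquad i=1,\ldots,p,
\]
with a common \emph{diagonal} exponent $d_k$ and \emph{off-diagonal} exponent $o_k$ that do not depend on $i$. This symmetric shape is forced by the fact that \eqref{eq:alm} treats all indices on equal footing, and it holds at $k=0$ with $d_0=1$, $o_0=0$. Substituting the ansatz into \eqref{eq:alm} and counting occurrences of each factor then gives the exponent recurrences: in $\prod_{j\ne i} a_j^{(k)}$ the factor $a_i$ appears only through the off-diagonal slots of the $p-1$ terms, contributing $(p-1)o_k$; a factor $a_m$ with $m\ne i$ appears once on the diagonal slot of the term $j=m$ (exponent $d_k$) and in the off-diagonal slots of the remaining $p-2$ terms (exponent $(p-2)o_k$). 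Taking the $(p-1)$-th root yields
\[
d_{k+1}=o_k,\qquad o_{k+1}=\frac{d_k+(p-2)o_k}{p-1}.
\]

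Next I would collapse these to a single scalar recurrence by exploiting an invariant. Summing all $p$ exponents of $a_i^{(k+1)}$ shows that the total exponent $d_k+(p-1)o_k$ is preserved by \eqref{eq:alm}, because the $(p-1)$-th root exactly cancels the $p-1$ factors in the product; hence $d_k+(p-1)o_k=d_0+(p-1)o_0=1$ for every $k$. Combining this invariant with $d_{k+1}=o_k$ gives the first-order linear recurrence $o_{k+1}=(1-o_k)/(p-1)$, whose fixed point is $1/p$ and whose solution with $o_0=0$ is the geometric expression $o_k=\tfrac{1}{p}\bigl(1-(-(p-1))^{-k}\bigr)=n_k$. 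Finally $d_k=o_{k-1}=n_{k-1}$, with $d_0=n_{-1}=1$ checked directly from the formula for $n_k$, which is exactly the claimed representation of $a_i^{(k)}$.

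I do not expect a genuine obstacle here: the argument is a routine induction, and the only care needed is the bookkeeping in the exponent count — that the factor $a_m$ is counted once on the diagonal and $p-2$ times off-diagonal — together with the elementary check that the geometric solution of $o_{k+1}=(1-o_k)/(p-1)$ matches the stated $n_k$. The invariant $d_k+(p-1)o_k=1$ is the device that keeps the algebra short and removes the need to solve the two-term recurrence for $o_k$ directly.
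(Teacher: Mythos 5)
Your proof is correct and follows essentially the same route as the paper: both arguments work from the simplified recurrence \eqref{eq:alm}, posit the monomial form with a diagonal and an off-diagonal exponent, and reduce the claim by exponent counting to the linear recurrence $(p-1)n_{k+1}=(p-2)n_k+n_{k-1}$ with $n_0=0$, $n_{-1}=1$. Your only variation is to solve that recurrence via the conserved quantity $d_k+(p-1)o_k=1$ and the resulting first-order relation $o_{k+1}=(1-o_k)/(p-1)$ rather than by finding the characteristic roots of the second-order equation, which is a harmless (and slightly cleaner) piece of algebra.
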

\begin{proof}
We proceed by induction on $k$. The case $k=0$, follows from $n_0=0$ and $n_{-1}=1$.
For $k>0$, from \eqref{eq:alm}, we obtain for the sequences $\{n_k\}_k$ the difference equation
$(p-1)n_{k+1}=(p-2)n_k+n_{k-1}$, with $n_0=0$ and $n_{-1}=1$, whose solution is
$n_k=\frac{1}{p}(1-(-(1/(p-1))^k))$.
\end{proof}

Observe that $\lim _k n_k
=\frac{1}{p}$ so that
$\lim_k a_i^{(k)}(z)=\left(\prod_{i=1}^p a_i(z)\right)^\frac{1}{p}$ pointwise as expected.
On the other hand, from Theorem \ref{thm:alm2norm} it follows that convergence is uniform.
We may conclude with the following.

\begin{theorem}
If $A_i=T(a_i)+K_i\in\QT$, for $i=1,\ldots,p,$  are positive definite operators, then the symbol $g(t)$ associated with $G=G(A_1,\ldots, A_p)$ is such that $g(t)=(a_1(t)\cdots a_p(t))^{\frac{1}{p}}$.
\end{theorem}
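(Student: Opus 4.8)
The plan is to combine the pointwise convergence of the ALM iterates of the symbols, established in Lemma~\ref{th:11}, with the uniform convergence of those same symbols guaranteed by Theorem~\ref{thm:alm2norm}, part~3. Concretely, Theorem~\ref{thm:alm2norm} tells us that $G=G(A_1,\ldots,A_p)\in\QT$ with Toeplitz symbol $g$, and that $\lim_k\|a_i^{(k)}-g\|_\infty=0$ for each $i=1,\ldots,p$; in particular the sequence $\{a_i^{(k)}\}_k$ converges uniformly, hence pointwise, to $g$.

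First I would invoke Lemma~\ref{th:11}, which gives the explicit closed form
\[
a_i^{(k)}(t)=a_i(t)^{n_{k-1}}\prod_{j=1,\,j\ne i}^p a_j(t)^{n_k},
\]
with $n_k=\frac1p\bigl(1+\frac{(-1)^{k+1}}{(p-1)^k}\bigr)$. Here the symbols $a_1,\ldots,a_p$ are strictly positive continuous functions by Lemma~\ref{lem:pos} (since each $A_i$ is positive definite), so the fractional powers are well defined and the expression is continuous. Next I would pass to the limit pointwise: since $\lim_k n_k=\lim_k n_{k-1}=\frac1p$, the exponents on every factor tend to $\frac1p$, and because each $a_j(t)>0$ the map $x\mapsto a_j(t)^x$ is continuous in the exponent. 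Hence for each fixed $t\in[0,2\pi]$,
\[
\lim_k a_i^{(k)}(t)=a_i(t)^{1/p}\prod_{j=1,\,j\ne i}^p a_j(t)^{1/p}=\bigl(a_1(t)\cdots a_p(t)\bigr)^{1/p}.
\]

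Finally, I would reconcile the two limits. By Theorem~\ref{thm:alm2norm} the uniform limit of $a_i^{(k)}$ is $g$, and uniform convergence forces pointwise convergence to the same function; by Lemma~\ref{th:11} the pointwise limit is $(a_1\cdots a_p)^{1/p}$. Since a limit is unique, $g(t)=(a_1(t)\cdots a_p(t))^{1/p}$ for all $t$, which is the claim. I do not expect any serious obstacle here: all the substantive work (that $G\in\QT$, that the symbols converge uniformly, and the closed form of the iterates) has already been carried out in Theorem~\ref{thm:alm2norm} and Lemma~\ref{th:11}. The only point requiring a moment's care is the justification of the exponent limit, namely that $a_j(t)^{n_k}\to a_j(t)^{1/p}$, which uses strict positivity of $a_j$ (again via Lemma~\ref{lem:pos}) together with the continuity of exponentiation in the exponent for a fixed positive base; this is where I would spend the one sentence of genuine argument.
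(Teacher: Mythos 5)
Your proof is correct and follows essentially the same route as the paper: the paper likewise combines the closed form of Lemma~\ref{th:11} with $\lim_k n_k=\frac1p$ to get the pointwise limit $(a_1\cdots a_p)^{1/p}$, and then identifies it with $g$ via the uniform convergence from Theorem~\ref{thm:alm2norm}. Your extra sentence justifying $a_j(t)^{n_k}\to a_j(t)^{1/p}$ via strict positivity of the symbols (Lemma~\ref{lem:pos}) is a harmless refinement of what the paper leaves implicit.
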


{Now, consider the case where $A_1, A_2, \ldots, A_p\in \QT$, are such that their associated symbols $a_i\in{\mathcal W}$ and $E_i\in \mathcal{K}(\ell^2)$ for $i=1,\ldots,p$.  It is clear that the sequences $\{A_i^{(k)}\}$ generated by the ALM iteration converge to a matrix $G\in \mathcal{QT}$ with symbol $g=(a_1\cdots a_p)^{\frac{1}{p}}$. Since ${\mathcal W}$ is a Banach algebra, 
we have  $g=(a_1\cdots a_p)^{\frac{1}{p}}\in {\mathcal W}$. Concerning the convergence of the symbols $\{a_i^{(k)}\}$ of the sequences $\{A_i^{(k)}\}$, we know that $\lim_k\|a_i^{(k)}-g\|_{\infty}=0$, but uniform convergence does not imply that $\lim_k\|a_i^{(k)}-g\|_{\w}=0$
 (see \cite[Page 34]{Yitzhak}).

{Now we show that under some regularity conditions, the sequences $\{a_i^{(k)}\}_k$ converge to $g$ in Wiener norm, i.e., $\lim_k\|a_i^{(k)}-g\|_{\w}=0$.}

\begin{theorem}\label{thm:unified}
Let $a_1,\ldots,a_p\in C_{2\pi}$ be { the symbols of the positive definite matrices $A_1,\ldots, A_p\in\QT$,}  and let $a_i^{(k)}$, for $i=1,\ldots,p$ and $k=0,1,2,\ldots,$ be the sequence obtained by the ALM iteration \eqref{eq:ak}. If {one of the conditions}
\begin{itemize}
\item[(a)] $a_i\in {C_{2\pi}^{0,\alpha}}$, with $\alpha\in(\frac{1}{2},1]$;
\item[(b)] $a_i\in {C_{2\pi}^{0,\alpha}}$, with {$\alpha\in(0,1]$} and of bounded variation;
\item[(c)] $a_i$ absolutely continuous and with derivative in $L_{2\pi}^2$;
\end{itemize}
for $i=1,\ldots,p$, is fulfilled, then $a_i^{(k)}\in\mathcal W$, $(a_1\cdots a_p)^{1/p}\in\mathcal W$ and $a_i^{(k)}\to_{\w}(a_1\cdots a_p)^{1/p}$.
\end{theorem}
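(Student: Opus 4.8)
The plan is to reduce the statement to the convergence of real powers of a single symbol under a vanishing perturbation of the exponent, and then to treat the three regularity classes separately using the composition estimates and the Wiener-norm bounds of Theorem~\ref{thm:bernstein}.

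First, I would record the explicit form of the iterates. By Lemma~\ref{th:11},
\[
a_i^{(k)}=a_i^{n_{k-1}}\prod_{j=1,\,j\ne i}^p a_j^{n_k},\qquad n_k=\tfrac1p\Bigl(1+\tfrac{(-1)^{k+1}}{(p-1)^k}\Bigr)\xrightarrow[k\to\infty]{}\tfrac1p .
\]
Since each $A_i$ is positive definite, Lemma~\ref{lem:pos} gives $a_i>0$, so $a_i$ is continuous with range in a compact interval $\mathcal I_i\subset(0,\infty)$, and $z\mapsto z^s$ is analytic (in particular $C^1$) on $\mathcal I_i$ for every real $s$. Under any of (a),(b),(c) the corresponding part of Theorem~\ref{thm:bernstein} gives $a_i\in\mathcal W$; then Theorem~\ref{thm:levy} yields $a_i^s\in\mathcal W$ for every real $s$, and since $\mathcal W$ is a Banach algebra the products $a_i^{(k)}$ and $g=(a_1\cdots a_p)^{1/p}$ lie in $\mathcal W$. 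This settles the membership assertions.

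The core step is the claim that, for a fixed $i$ and any exponent sequence $s_k\to 1/p$, one has $\|a_i^{s_k}-a_i^{1/p}\|_{\w}\to0$. I would write $a_i^{s_k}-a_i^{1/p}=\psi_k\circ a_i$ with $\psi_k(z)=z^{s_k}-z^{1/p}$, and note that by joint continuity of $(s,z)\mapsto z^s$ and $(s,z)\mapsto sz^{s-1}$ on the compact set $\mathbb R\times\mathcal I_i$ both $\|\psi_k\|_{\infty,\mathcal I_i}\to0$ and $\|\psi_k'\|_{\infty,\mathcal I_i}\to0$. In case (a), Lemma~\ref{thm:lemmaB} gives $[\psi_k\circ a_i]_\alpha\le\|\psi_k'\|_{\infty,\mathcal I_i}[a_i]_\alpha\to0$ and $\|\psi_k\circ a_i\|_\infty\le\|\psi_k\|_{\infty,\mathcal I_i}\to0$, whence $\|\psi_k\circ a_i\|_{C^{0,\alpha}}\to0$ and \eqref{eq:hoelder} yields the claim. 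In case (c), Lemma~\ref{thm:lemmaB2}(i) gives $\|(\psi_k\circ a_i)'\|_{L^2}\le\|\psi_k'\|_{\infty,\mathcal I_i}\|a_i'\|_{L^2}\to0$ while $\|\psi_k\circ a_i\|_{L^1}\le 2\pi\|\psi_k\|_{\infty,\mathcal I_i}\to0$, and \eqref{ab+l2} finishes it. In case (b), Lemma~\ref{thm:lemmaB} shows $\psi_k\circ a_i$ is $\alpha$-\Holder\ (so \eqref{eq:holder+bv} applies), Lemma~\ref{thm:lemmaB2}(ii) gives $V(\psi_k\circ a_i)\le\|\psi_k'\|_{\infty,\mathcal I_i}V(a_i)\to0$, and the zeroth Fourier coefficient of $\psi_k\circ a_i$ is bounded in modulus by $\|\psi_k\|_{\infty,\mathcal I_i}\to0$; since $\sum_{\ell\ge1}(\pi2^{-\ell})^{\alpha/2}$ is a finite constant, \eqref{eq:holder+bv} gives the claim.

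Finally, applying the core step with $s_k=n_{k-1}$ for the factor $a_i$ and with $s_k=n_k$ for the factors $a_j$, $j\ne i$ (both index sequences tending to $1/p$), I obtain $p$ sequences converging in the Wiener norm, namely $a_i^{n_{k-1}}\to a_i^{1/p}$ and $a_j^{n_k}\to a_j^{1/p}$. Lemma~\ref{thm:ba1} then shows that their product, which by Lemma~\ref{th:11} equals $a_i^{(k)}$, converges in $\mathcal W$ to $a_i^{1/p}\prod_{j\ne i}a_j^{1/p}=(a_1\cdots a_p)^{1/p}=g$ for every $i$, i.e. $\|a_i^{(k)}-g\|_{\w}\to0$, as required. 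I expect the core step to be the main obstacle: uniform convergence alone does not transfer to the Wiener norm, so the real work is to control the stronger seminorms ($[\cdot]_\alpha$, $V(\cdot)$, $\|\cdot'\|_{L^2}$) of the difference; the composition estimates are tailored so that all of these are dominated by $\|\psi_k'\|_{\infty,\mathcal I_i}$, which vanishes, with case (b) the most delicate because its bound \eqref{eq:holder+bv} mixes the zeroth coefficient and $V(\cdot)^{1/2}$.
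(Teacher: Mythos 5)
Your proposal is correct and follows essentially the same route as the paper's proof: both reduce, via Lemma~\ref{th:11} and Lemma~\ref{thm:ba1}, to showing $a_i^{n_k}\to_{\w}a_i^{1/p}$, write the difference as a composition $f_k\circ a_i$ with $\|f_k\|_{\infty,\mathcal I}$ and $\|f'_k\|_{\infty,\mathcal I}$ tending to zero, and then handle each of the three cases with Lemmas~\ref{thm:lemmaB}, \ref{thm:lemmaB2} and the bounds \eqref{eq:hoelder}, \eqref{eq:holder+bv}, \eqref{ab+l2} exactly as the paper does. The only (harmless) cosmetic difference is that you state the core step for a general exponent sequence $s_k\to 1/p$ and apply it separately to $n_{k-1}$ and $n_k$, whereas the paper phrases it directly for $n_k$.
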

\begin{proof}
Observe that, in all three cases, Theorem \ref{thm:bernstein} implies that $a_1,\ldots,a_p\in\mathcal W$, {so that $a_i^{(k)}, a_i^{1/p}$ and $(a_1\cdots a_p)^{1/p}$ belong to $\mathcal W$ in view of Theorem \ref{thm:levy}. To show $a_i^{(k)}\to_{\w}(a_1\cdots a_p)^{1/p}$, we can see from Lemmas \ref{thm:ba1} and \ref{th:11} that  it suffices to show $a_i^{n_k}\to_{\w}a_i^{1/p}$, where $n_k$ is defined in Lemma \ref{th:11} and  $\lim_k n_k=\frac{1}{p}$.}

With the notation of Lemma \ref{th:11}, we have $a_i^{n_k}-a_i^{1/p}=f_k\circ a_i$, where  
\[
f_k(x):=x^{n_k}-x^{1/p}=x^{n_k}(1-x^{(-1/(p-1))^k})
\]
is a sequence of analytic functions on $(0,\infty)$.  { Observe that $a_i$, for $i=1,\ldots,p$, is a strictly positive function in view of Lemma \ref{lem:pos}, let $\mathcal I$ be the set of the range of $a_i$, then $\mathcal I\subset (0,\infty)$ is a closed interval and the sequences $\{f_k\}_k$ and $\{f'_k\}_k$ converge to $0$ uniformly on $\mathcal I$ by
\cite[Theorem 1.2]{Lang}.
We show that $\|a_i^{n_k}-a_i^{\frac{1}{p}}\|_{\w}=\|f_k\circ a_i\|_{\w}\rightarrow 0$ under the three cases.} 

In case (a), we can use Lemma \ref{thm:lemmaB} and Theorem \ref{thm:bernstein} again to get the following
\[
	\|f_k\circ a_i\|_{\w}\le \gamma_a\|
f_k\circ a_i\|_{C^{0,\alpha}}\le \gamma_a\gamma (\|f_k\|_{\infty,\mathcal I}+\|f'_k\|_{\infty, \mathcal I})\to 0.
\]

In case (b),  by  {Lemmas 12 and 14}, {$f_k\circ a_i\in C_{2\pi}^{0,\alpha}$ and is of bounded variation with} $V(f_k\circ a_i)\le\|f'_k\|_{\infty,\mathcal I}V(a_i)\rightarrow 0$.

Set $(f_k\circ a_i)(t):=h_k(t)=\sum_{j\in \mathbb{Z}}h^{(k)}_je^{\iunit jt}$, then $\|h_k\|_{\infty}=\|f_k\|_{\infty,\mathcal I}\rightarrow 0$. Observe that $|h_0^{(k)}|\leq \frac{1}{2\pi}\int_{0}^{2\pi}|h_k(t)|dt\leq \|h_k\|_{\infty}$ so that $h_0^{(k)}\rightarrow 0$ as $k\rightarrow \infty$.

Together with {Theorem \ref{thm:bernstein} and \eqref{eq:holder+bv}}, it yields
\[
\|f_k\circ a_i\|_{\w}\leq \gamma_b\Big(|h_0^{(k)}|+V(f_k\circ a_i)^{\frac{1}{2}}\sum_{v=1}^{\infty}(\pi2^{-v})^{\frac{\alpha}{2}}\Big)\rightarrow 0.
\]

In case (c), we use { part (i) of Lemma \ref{thm:lemmaB2}  and \eqref{ab+l2}} to get
\[
\begin{split}
	\|f_k\circ a_i\|_{\w} & \le {
	\gamma_c(\|f_k\circ a_i\|_{L^1}+\|(f_k\circ a_i)'\|_{L^2})}\\
	& \le \gamma_c({\|f_k\|_{\infty, \mathcal I}+\|f'_k\|_{\infty, \mathcal I}\|a_i'\|_{L^2})\to 0.}
	\end{split}
\]
\end{proof}

\subsection{NBMP mean and weighted mean}\label{sec:bmp}
The analysis performed in the previous section can be repeated here concerning the NBMP mean.
In fact, since the $p$ sequences  $\{A_i^{(k)}\}_{k=1,2,\ldots}$, $i=1,\ldots,p$,  generated by \eqref{eq:almbmp} for the  NBMP iteration converge to a common limit $G$ in the Thompson metric \cite{Hosoo}, then they converge in the operator norm. Following an analysis similar to the one of Section \ref{sec:alm}, one can see that the NBMP mean of positive definite \QTmm\ is a \QTm. Moreover, since the NBMP construction applied to scalars converges in just one step, then we have that for the symbols $a_i^{(k)}$ obtained this way it holds that $a_i^{(k)}(z)=g(z)$ for $k\ge1$, $i=1,\ldots,p$. We may conclude with the following results which can be proved by adapting the proof of Theorem \ref{thm:alm2norm}.

\begin{theorem}\label{thm:bmp}
Let $A_i=T(a_i)+E_i\in \mathcal{QT}$, for $i=1,\ldots, p$, $p\ge 3$, be positive definite. Then the matrices $A_i^{(k)}$ generated by \eqref{eq:almbmp} for the NBMP iteration, and the
NBMP mean $G=G(A_1,\ldots,A_p)$ of $A_1,\ldots, A_p$, satisfy the following properties:
\begin{enumerate}
\item $A_i^{(k)}=T(g)+K_{i}^{(k)}\in\QT$, $i=1,\ldots,p$, for any $k\ge 1$, where $g=(a_1\cdots a_p)^{\frac{1}{p}}$;
\item  $G=T(g)+K_G\in \QT$;
\item $\lim_k\|K_{i}^{(k)}-{K_G}\|_2=0$, for $i=1,\ldots,p$.
\end{enumerate}
\end{theorem}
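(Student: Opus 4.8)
The plan is to mirror the structure of the proof of Theorem \ref{thm:alm2norm}, exploiting the fact that the NBMP construction is even simpler at the level of symbols. First I would observe that, exactly as in Theorem \ref{thm:alm2norm}, the sequences $\{A_i^{(k)}\}_k$ generated by \eqref{eq:almbmp} for the NBMP iteration converge to a common limit $G$ in the Thompson metric by \cite{Hosoo}, and hence $\lim_k\|A_i^{(k)}-G\|_2=0$ since the Thompson topology agrees with the operator norm topology \cite{thompson}. The closure of $\QT$ under the weighted geometric mean, guaranteed by Corollary \ref{cor:0}, together with the inductive argument on $p$ already carried out for the ALM case, shows that each $A_i^{(k)}\in\QT$ and that its Toeplitz symbol $a_i^{(k)}$ satisfies the scalar NBMP recursion obtained by replacing the operators $A_j^{(k)}$ with their symbols $a_j^{(k)}$.

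The crucial simplification, and the only place where the argument genuinely departs from the ALM case, is the behaviour of the scalar recursion. For positive scalars $a_1,\dots,a_p$, the NBMP iteration with weights $((p-1)/p,\dots,1/2)$ applied to the symbols gives, after one step,
\[
a_i^{(1)}=\bigl(a_i^{(0)}\bigr)^{1/p}\Bigl(\prod_{j\ne i}a_j^{(0)}\Bigr)^{1/p}=(a_1\cdots a_p)^{1/p}=:g,
\]
independently of $i$, since on commuting positive scalars the weighted geometric mean reduces to a product of powers and the NBMP weights are chosen precisely so that each factor receives exponent $1/p$. Thus $a_i^{(k)}=g$ for all $k\ge 1$ and all $i$, which is the content of part 1: the Toeplitz part of every iterate is already the final symbol $T(g)$, and only the compact correction $K_i^{(k)}$ varies with $k$. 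Here I would invoke Lemma \ref{lem:pos} to guarantee $a_i>0$, so that the $1/p$ powers are well defined and the scalar identity makes sense, and the Banach-algebra fact that $g=(a_1\cdots a_p)^{1/p}\in C_{2\pi}$.

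For parts 2 and 3 I would appeal to Lemma \ref{thm:lemma0}: since $A_i^{(k)}\in\QT$ and $\lim_k\|A_i^{(k)}-G\|_2=0$, the limit $G$ lies in $\QT$, its Toeplitz symbol is the uniform limit of the $a_i^{(k)}$, and the compact parts converge in operator norm. But the $a_i^{(k)}$ are all equal to $g$ for $k\ge 1$, so the symbol of $G$ is exactly $g$, giving $G=T(g)+K_G$, and Lemma \ref{thm:lemma0} yields $\lim_k\|K_i^{(k)}-K_G\|_2=0$. I do not anticipate a serious obstacle, since the operator-theoretic machinery is entirely inherited from the ALM analysis; the only point requiring care is verifying the scalar one-step collapse for the specific NBMP weight tuple $((p-1)/p,(p-2)/(p-1),\dots,1/2)$, which should be checked by the same inductive unfolding of $G_{s_2,\dots,s_{p-1}}$ on commuting positive scalars that underlies equation \eqref{eq:alm}.
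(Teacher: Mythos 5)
Your proposal is correct and follows essentially the same route as the paper: convergence in the Thompson metric gives operator-norm convergence, the inductive $\QT$-closure argument is inherited verbatim from the ALM case, the scalar NBMP recursion collapses to $g=(a_1\cdots a_p)^{1/p}$ after one step so that $a_i^{(k)}=g$ for $k\ge 1$, and Lemma \ref{thm:lemma0} then delivers parts 2 and 3. The paper states this only in outline (``can be proved by adapting the proof of Theorem \ref{thm:alm2norm}''), and your explicit verification of the one-step collapse for the weight tuple $((p-1)/p,\ldots,1/2)$ is exactly the point it leaves implicit.
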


A similar argument can be used for the weighted mean: the $p$ sequences  $\{A_i^{(k)}\}_{k=1,2,\ldots}$, $i=1,\ldots,p$, generated by \eqref{eq:weighted} converge to their limit $G$ in the Thompson metric \cite{lll-12}, and thus then they converge in the operator norm, and as before, the weighted mean of positive definite \QTmm\ is a \QTm. The symbols $a_i^{(k)}$ obtained with this procedure are such that, for $k>1$, we have $a_i^{(k+1)}(z)=a_i^{(k)}\#_{1-w_i}G_{\hat w^{(i)}}(a_1^{(k)},\ldots,a_{i-1}^{(k)},a_{i+1}^{(k)},\ldots, a_p^{(k)})$ for $i=1,\ldots,p$. 
We can show that $a_i^{(k)}=a_1^{w_1}a_2^{w_2}\cdots a_p^{w_p}=G_w(a_1,\ldots,a_p)$ for $k\ge 1$ and get the following results which can be proved again by adjusting the proof of Theorem~\ref{thm:alm2norm}.

\begin{theorem}\label{thm:weighted}
Let $A_i=T(a_i)+E_i\in \mathcal{QT}$, for $i=1,\ldots, p$, $p\ge 3$, be positive definite. Let  $w=(w_i)\in\mathbb R^p$ be a probability vector, and 
 $A_i^{(k)}$ be the matrix sequences generated by \eqref{eq:weighted} for the weighted iteration. Finally, let
 $G=G_w(A_1,\ldots,A_p)$  be the weighted mean of $A_1,\ldots, A_p$.
 Then we have
\begin{enumerate}
\item $A_i^{(k)}=T(g)+K_{i}^{(k)}\in\QT$, $i=1,\ldots,p$, for any $k\ge 1$, where $g=a_1^{w_1}a_2^{w_2}\cdots a_p^{w_p}$;
\item  $G=T(g)+K_G\in \mathcal{QT}$;
\item $\lim_k\|K_{i}^{(k)}-K_G\|_2=0$, for $i=1,\ldots,p$.
\end{enumerate}
\end{theorem}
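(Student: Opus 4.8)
The plan is to mirror the proof of Theorem \ref{thm:alm2norm}, replacing the ALM recurrence by the weighted recurrence \eqref{eq:weighted} and carrying out a double induction, on the number $p$ of matrices and on the iteration index $k$. The external input I would invoke first is the convergence result of \cite{lll-12}: the sequences $\{A_i^{(k)}\}_k$ generated by \eqref{eq:weighted} converge to the common limit $G=G_w(A_1,\ldots,A_p)$ in the Thompson metric, and since the Thompson-metric topology agrees with the relative operator-norm topology (as recalled for the ALM case), this yields $\lim_k\|A_i^{(k)}-G\|_2=0$ for $i=1,\ldots,p$. Everything else is then an application of Corollary \ref{cor:0} and Lemma \ref{thm:lemma0}.

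For the $\QT$-membership (part 1), I would argue by induction on $p$. The base case is the binary weighted mean $G_{w_1,w_2}(A_1,A_2)=A_1\#_{w_2}A_2$, which lies in $\QT$ with symbol $a_1^{w_1}a_2^{w_2}$ by Corollary \ref{cor:0} (using $w_1+w_2=1$). For the inductive step, I assume the claim for $p-1$ matrices and then run an inner induction on $k$: given $A_i^{(k)}\in\QT$, the reduced-weight $(p-1)$-ary mean $G_{\hat w^{(i)}}(A_1^{(k)},\ldots,A_{i-1}^{(k)},A_{i+1}^{(k)},\ldots,A_p^{(k)})$ is in $\QT$ by the inductive hypothesis on $p$, and then $A_i^{(k+1)}=A_i^{(k)}\#_{1-w_i}G_{\hat w^{(i)}}(\ldots)\in\QT$ by Corollary \ref{cor:0}. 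Simultaneously I would track the symbols: writing $a_i^{(k)}$ for the symbol of the Toeplitz part of $A_i^{(k)}$, Corollary \ref{cor:0} gives $a_i^{(k+1)}=(a_i^{(k)})^{w_i}\prod_{j\ne i}(a_j^{(k)})^{w_j}$. Here the crucial simplification over the ALM case appears: since the $a_j^{(0)}=a_j$ are strictly positive functions (pointwise), one step already gives $a_i^{(1)}=\prod_{j=1}^p a_j^{w_j}=g$, and then $a_i^{(k)}=g$ for every $k\ge1$, so the symbol stabilizes immediately rather than converging through a recurrence. This is precisely the one-step scalar convergence of the weighted construction, and it proves part 1 including the identification $g=a_1^{w_1}\cdots a_p^{w_p}$.

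For parts 2 and 3, I would pass to the limit using Lemma \ref{thm:lemma0}: since $A_i^{(k)}\in\QT$ and $\lim_k\|A_i^{(k)}-G\|_2=0$, the lemma guarantees $G\in\QT$, say $G=T(g_\infty)+K_G$, with the Toeplitz symbols converging uniformly and the compact parts converging in operator norm, $\lim_k\|K_i^{(k)}-K_G\|_2=0$. Because $a_i^{(k)}=g$ for all $k\ge1$, the uniform limit of the symbols is $g$, whence $g_\infty=g$ and $G=T(g)+K_G$; this is part 2, and the convergence of the compact parts is part 3. The main obstacle I anticipate is purely organizational rather than analytic: keeping the double induction on $p$ and $k$ consistent so that the inner $(p-1)$-ary weighted mean is already known to be in $\QT$ with the correct symbol at each step. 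Once that bookkeeping is in place, the one-step stabilization of the symbols makes the weighted case, if anything, lighter than the ALM case, since no analogue of the coefficient analysis of Lemma \ref{th:11} is needed.
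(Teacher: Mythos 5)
Your proposal is correct and follows essentially the same route as the paper, which proves this theorem by "adjusting the proof of Theorem \ref{thm:alm2norm}": convergence in the Thompson metric from \cite{lll-12} giving operator-norm convergence, the double induction on $p$ and $k$ with Corollary \ref{cor:0} for $\QT$-membership and symbol tracking, Lemma \ref{thm:lemma0} for the limit, and the observation that the scalar weighted recurrence stabilizes at $g=a_1^{w_1}\cdots a_p^{w_p}$ after one step. Your explicit verification of the one-step stabilization $a_i^{(1)}=(a_i)^{w_i}\prod_{j\ne i}a_j^{w_j}=g$ is exactly the computation the paper leaves implicit.
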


\subsection{Karcher mean}

Let $\mathcal B$ be a $C^*$-algebra, it has been proved in \cite{lawson-2020} that the  equation
\begin{equation}\label{eq:karcher}
\sum_{i=1}^p\log(X^{-1/2}A_iX^{-1/2})=0,
\end{equation}
where $A_1,\ldots,A_p\in\mathcal B$ has a strictly positive definite solution $X\in\mathcal B$, unique if $\mathcal B$ is the $C^*$-algebra of bounded operators over an Hilbert space.

This implies that the Karcher mean of quasi-Toeplitz matrices exists and it is unique.

\begin{theorem}
Let $T(a_i)+K_i\in\QT$, with $a_i>0$ for $i=1,\ldots,p$. There exists a unique solution $X$ of the equation \eqref{eq:karcher} and $X=T(g)+K_G\in\QT$ where $g=(a_1\cdots a_p)^{1/p}$.
\end{theorem}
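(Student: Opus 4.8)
The plan is to leverage the existence-and-uniqueness result of \cite{lawson-2020} together with the functional-calculus machinery of Theorems \ref{thm:f(A)gen} and \ref{th:widom}. First I would note that, since $\QT$ is itself a $C^*$-algebra, applying \cite{lawson-2020} to $\QT$ produces a strictly positive definite solution $X\in\QT$ of \eqref{eq:karcher}, while applying the same result to the ambient $C^*$-algebra $\mathcal B(\ell^2)$ gives uniqueness of the solution there. Because $\QT$ is a $C^*$-subalgebra containing the identity, the continuous functional calculus, and hence the entire left-hand side of \eqref{eq:karcher}, is computed consistently in $\QT$ and in $\mathcal B(\ell^2)$ (spectral permanence); thus the element found in $\QT$ is the unique operator solution $X$. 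This already yields $X=T(g)+K_G\in\QT$ for some $g\in C_{2\pi}$ and $K_G\in\K$, and since $X$ is positive definite, Lemma \ref{lem:pos} guarantees $g>0$.

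It then remains to identify $g$, and the plan is to track the symbol through each operation appearing in \eqref{eq:karcher}. Since $x\mapsto x^{-1/2}$ is continuous on the positive spectrum of $X$, Theorem \ref{thm:f(A)gen} gives $X^{-1/2}=T(g^{-1/2})+K'$ with $K'$ compact. Forming the product $X^{-1/2}A_iX^{-1/2}$ and invoking Theorem \ref{th:widom} (the Hankel corrections are compact, so the symbol of a product of quasi-Toeplitz matrices is the product of their symbols), the symbol of $X^{-1/2}A_iX^{-1/2}$ is $g^{-1/2}a_ig^{-1/2}=a_i/g$. Applying Theorem \ref{thm:f(A)gen} once more with $f=\log$ shows that $\log(X^{-1/2}A_iX^{-1/2})$ is a quasi-Toeplitz matrix whose symbol is $\log(a_i/g)$.

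Summing over $i$, the symbol of $\sum_{i=1}^p\log(X^{-1/2}A_iX^{-1/2})$ is $\sum_{i=1}^p\log(a_i/g)=\log\bigl((a_1\cdots a_p)/g^p\bigr)$. Equation \eqref{eq:karcher} forces this operator to be the zero operator; by the uniqueness of the decomposition of a quasi-Toeplitz matrix, its symbol must vanish identically, whence $g^p=a_1\cdots a_p$ pointwise. Since $g>0$, this gives $g=(a_1\cdots a_p)^{1/p}$, completing the identification.

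The step I expect to require the most care is the consistency argument of the first paragraph: one must ensure that the solution supplied by \cite{lawson-2020} for the abstract algebra $\QT$ genuinely coincides with the unique operator-level solution in $\mathcal B(\ell^2)$, which rests on the spectral permanence of $C^*$-subalgebras so that $X^{-1/2}$, $\log$, and the other functions have the same meaning in both algebras. By contrast, the symbol bookkeeping in the remaining paragraphs is essentially routine once Theorems \ref{thm:f(A)gen} and \ref{th:widom} are in hand.
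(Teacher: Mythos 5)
Your proposal is correct and follows essentially the same route as the paper: existence in the $C^*$-algebra $\QT$ plus uniqueness in $\mathcal B(\ell^2)$ via \cite{lawson-2020}, then symbol bookkeeping through $X^{-1/2}$, the products, and the logarithm, concluding by the uniqueness of the quasi-Toeplitz decomposition. Your extra care about spectral permanence and your use of the generalized Theorem \ref{thm:f(A)gen} (needed because $X$ carries a compact part) make the argument slightly more explicit than the paper's, but the substance is the same.
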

\begin{proof}
Since $\QT$ is a $C^*$-subalgebra of $\mathcal B(\ell^2)$ then equation \eqref{eq:karcher} has a unique solution $X=T(g)+K_G$ in $\QT$.

Observe that by Theorem \ref{thm:f(A)}, and by the elementary arithmetic of quasi-Toeplitz matrices, we have that $X^{-1/2}=T(g^{-1/2})+K_1$, $X^{-1/2}A_iX^{-1/2}=T(a_i/g)+K_{2,i}$, $\log(X^{-1/2}A_iX^{-1/2})=T(\log(a_i/g))+K_{3,i}$ and finally
\[
	0=\sum_{i=1}^p \log(X^{-1/2}A_iX^{-1/2})=T(\log(a_1\cdots a_p/g^p))+K_4,
\]
with $K_1, K_{2,i},K_{3,i},K_4\in\mathcal K$, for $i=1,\ldots,p$.

By the uniqueness of decomposition of quasi-Toeplitz matrices, it follows that $g=(a_1\cdots a_p)^{1/p}$.
\end{proof}

\section{Computational issues}\label{sec:issues}
In this section we discuss some issues concerning the effective computation of the geometric mean of $A_1,\ldots, A_p\in\mathcal{QT}$. We rely on the CQT-Toolbox \cite{bmr} for computations in the $\QT$ algebra but in order to compute geometric means, we need to compute some
fundamental functions of \QTmm, namely, the $p$-th root and in particular the square root, that we will discuss in the following. 
{In this section, without loss of generality, we assume that the matrix
$A=T(a)+E_A \in \QT$ is such that  $0 \le a(z) \le 1$. This condition is satisfied in particular if $A$ is positive and 
$\|A\|_2\le 1$
since $\|a(z)\|_\infty=\|T(a)\|_2\le\|A\|_2$.}

\subsection{Square root}
The square root has been implemented in \cite{bmr}
in two different ways relying on the Denman and Beavers algorithm, and on the Cyclic Reduction (CR) algorithm, respectively. While the two algorithms are equivalent to the Newton method for a matrix $A$, if the initial value commutes with $A$, in finite arithmetic their behavior differs (see \cite[Section 6.3]{higham}). 
In our numerical tests, the CR algorithm provided better numerical results and thus it appears to be better suited for \QTmm. 

The CR algorithm for the square root is defined as follows

\begin{equation}\label{eq:cr1}
\begin{array}{ll}
Y_{k+1}=-Y_k W_k^{-1} Y_k, &  Y_0= I-A, \\[1ex]
W_{k+1}=W_k+2Y_{k+1}, & W_0=2(I+A),
\end{array}
\end{equation}
where we assume that all the matrices $W_k$ are invertible. In the finite dimensional case it follows that 
$\lim_k\frac14 W_k=A^\frac{1}{2}$, $\lim_kY_k=0$, where convergence holds in any 
operator norm.

The sequences obtained by the CR algorithm are related to the sequences obtained by the simplified Newton method
\begin{equation}\label{eq:sn}
	X_{k+1}=\frac{1}{2}(X_k+AX_k^{-1}),\qquad
	X_0=\frac{1}{2}(I+A).
\end{equation}
Indeed, $W_k=4X_k$ and $Y_k=2(X_k-X_{k-1})$, with $X_{-1}=A$ (see \cite{iannazzo-2003}).

We define the sequence $\{x_k(x)\}_k$ of rational functions of the variable $x$, by means of $x_{k+1}(x)=\frac{1}{2}(x_k(x)+xx_k^{-1}(x))$,
for $k=0,1,\ldots$, with $x_0(x)=\frac{1}{2}(1+x)$. Since the sequence $x_k(x)$ is obtained by applying the Newton method to the equation $z^2=x$, customary arguments show that for $x\in[0,1]$ the sequence $\{x_k(x)\}_{k=0,1,\ldots}$ is well-defined and monotonically decreasing to $\sqrt{x}$. In view of Dini's theorem we may conclude that convergence is uniform on $[0,1]$.

{From} the identity $X_k=x_k(A)$ we deduce the following.

\begin{corollary}\label{thm:cor22}
Let $A=T(a)+K\in\QT$ be self-adjoint and such that $a$ is \rv\ and $v^TAv\ge 0$ for any $v\in\ell^2$, $\|v\|=1$. Then the sequences $W_k$ and $Y_k$ generated by \eqref{eq:cr1} are such that $\lim_k\|Y_k\|_2=0$, $\lim_k\|W_k-4A^\frac12\|_2=0$.
\end{corollary}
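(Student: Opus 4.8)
The plan is to reduce the claim about the matrix sequences $W_k$ and $Y_k$ to a statement about the scalar Newton iteration, using the key identity $X_k = x_k(A)$ together with the functional-calculus machinery developed in Theorem \ref{thm:f(A)} (and its compact-perturbation version Theorem \ref{thm:f(A)gen}). First I would note that, since $A = T(a)+K$ is self-adjoint with $a$ real valued and $v^*Av \ge 0$ for all $v$, Lemma \ref{lem:pos} gives that $a$ is nonnegative, and the normalization $0 \le a(z) \le 1$ assumed at the start of the section places the spectrum of $A$ in $[0,1]$: indeed $\mathrm{sp}(A)$ lies in the closure of the numerical range by Lemma \ref{lem:toeplitz}(v), which is contained in $[0,\|A\|_2] \subset [0,1]$. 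Thus the scalar function $x_k(x)$ is continuous on an interval containing $\mathrm{sp}(A)$, so $X_k = x_k(A)$ is well-defined by continuous functional calculus.

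The central step is to transfer the uniform scalar convergence to operator-norm convergence. Since $\{x_k(x)\}_k$ converges uniformly to $\sqrt{x}$ on $[0,1]$ (by Dini's theorem, as recalled just before the corollary), I would invoke Lemma \ref{thm:lemC1}(iii): for a self-adjoint element of a $C^*$-algebra one has $\|f(A)\|_2 = \|f\|_{\infty,\mathrm{sp}(A)}$. Applying this to $f = x_k - \sqrt{\,\cdot\,}$ yields
\[
\|X_k - A^{1/2}\|_2 = \|x_k(A) - A^{1/2}\|_2 = \|x_k - \sqrt{\,\cdot\,}\|_{\infty,\mathrm{sp}(A)} \le \|x_k - \sqrt{\,\cdot\,}\|_{\infty,[0,1]} \to 0.
\]
From the stated relations $W_k = 4X_k$ and $Y_k = 2(X_k - X_{k-1})$ (with $X_{-1}=A$), the two desired limits follow directly: $\|W_k - 4A^{1/2}\|_2 = 4\|X_k - A^{1/2}\|_2 \to 0$, and $\|Y_k\|_2 = 2\|X_k - X_{k-1}\|_2 \le 2(\|X_k - A^{1/2}\|_2 + \|X_{k-1} - A^{1/2}\|_2) \to 0$.

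The main obstacle to watch is the invertibility hypothesis implicit in \eqref{eq:cr1} and the well-definedness of the Newton iterates $x_k(A)$: one must be sure that each $x_k$ is continuous (in fact finite) on all of $\mathrm{sp}(A)$, which requires that the denominators $x_{k-1}(x)$ stay bounded away from zero on $\mathrm{sp}(A)$. This is guaranteed on $[0,1]$ because the scalar Newton sequence decreases monotonically to $\sqrt{x} \ge 0$ starting from $x_0(x) = \tfrac12(1+x) \ge \tfrac12$, so each $x_k(x) \ge \sqrt{x} \ge 0$ and, more usefully, $x_k(x) \ge \tfrac12 > 0$ remains bounded below; hence $x_k$ is continuous on $[0,1]$ and functional calculus applies at every step, with $X_k$ invertible. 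A secondary subtlety is that the limit $A^{1/2}$ exists as a genuine element of $\QT$ only because $A^{1/2} = (\,\cdot\,)^{1/2}(A)$ is defined by continuous functional calculus on $\mathrm{sp}(A) \subset [0,1]$ (the point $0$ causes no trouble since $\sqrt{\,\cdot\,}$ is continuous there); this is exactly what Theorem \ref{thm:f(A)gen} supplies, also confirming $A^{1/2} \in \QT$. Once these points are settled, the argument is a short application of the $C^*$-algebra norm identity and requires no further matrix-level estimates.
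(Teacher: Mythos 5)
Your argument is correct and is essentially the proof given in the paper: reduce to the scalar Newton iterates via $X_k=x_k(A)$, use Lemma~\ref{thm:lemC1}(iii) to convert the uniform convergence $x_k\to\sqrt{\,\cdot\,}$ on $[0,1]\supset\mathrm{sp}(A)$ into $\|X_k-A^{1/2}\|_2\to 0$, and then read off the limits of $W_k=4X_k$ and $Y_k=2(X_k-X_{k-1})$. One small correction to your side remark on well-definedness: the uniform bound $x_k(x)\ge\tfrac12$ is false (e.g.\ $x_k(0)=2^{-(k+1)}$), but your conclusion survives because each fixed $x_k$ is continuous and strictly positive on $[0,1]$ (indeed $x_{k+1}\ge x_k/2$ gives $x_k(x)\ge 2^{-(k+1)}(1+x)>0$), which is all that is needed to define $x_{k+1}$ and apply the functional calculus at every step.
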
 
\begin{proof}
By scaling $A$, we can assume that $v^TAv\le 1$.
The spectrum ${\rm sp}(A)$ of $A$ is a compact set contained in $[0,1]$, in fact it is contained in the closure of the numerical range $\{v^TAv\,:\,v^Tv=1\}$ and this closure is contained in $[0,1]$. 
Let $\{X_k\}_k$ be the sequence obtained by \eqref{eq:sn} and $x_k(x)$ the corresponding scalar sequence. The function $\varphi_k(x):=x_k(x)-\sqrt{x}$ is continuous in $[0,1]$ and by Lemma \ref{thm:lemC1} we have
\[
	\|X_k-A^{1/2}\|_2=\|\varphi_k(X_k)\|_2=\|\varphi_k(x)\|_{\infty,\mathrm{sp}(A)}.
\]
Since $\mathrm{sp}(A)\subset[0,1]$, the latter tends to zero and we have that $\lim_k\|X_k-A^{1/2}\|_2=0$ and using $W_k=4X_k$ and $Y_k=2(X_k-X_{k-1})$ we obtain the proof. 
\end{proof}

{}From the above results 
it follows that the symbols associated with the matrices in \eqref{eq:cr1} uniformly converge to the symbols of their limit. Moreover the compact corrections associated with these matrix sequences converge in the $\mathcal B(\ell^2)$ norm to the compact corrections of their limit.

 A better convergence of the symbols can be obtained under the assumption of more regularity of $a(z)$ in view of Theorems \ref{thm:bernstein} and \ref{thm:levy}.

\subsection{$p$-th root}
It is well known that
if $A$ is an $n\times n$ Hermitian matrix, where $n$ is finite, with eigenvalues in $[0,1]$ then the sequence generated by Newton's iteration
$
Y_{k+1}=\frac{1}{p} Y_k((p-1)I+Y_k^{-p}A)$, $Y_0=I$,
is such that $\lim_k Y_k=A^{\frac{1}{p}}$ \cite{higham}. However, this iteration may encounter stability problems when implemented in floating point arithmetic. A stable version is based on the following iteration
\begin{equation}\label{eq:proot2}
\begin{array}{ll}
Y_{k+1}=Y_k(\frac{(p-1)I+M_k}{p}),&Y_0=I,\\[1ex]
M_{k+1}=(\frac{(p-1)I+M_k}{p})^{-p}M_k,&M_0=A.
\end{array}
\end{equation}
We prove that the iteration \eqref{eq:proot2} applied to a \QTm\ converges in norm. To this end we follow the same approach used for the square root. More precisely, we introduce the functional sequences
\begin{equation}\label{eq:prootfun}
\begin{array}{ll}
y_{k+1}(x)=y_k(x)(\frac{p-1+m_k(x)}{p}),&y_0(x)=1,\\[1ex]
m_{k+1}(x)=(\frac{p-1+m_k(x)}{p})^{-p}m_k(x),&m_0(x)=x.
\end{array}
\end{equation}
so that we have $Y_k=y_k(A)$, $M_k=m_k(A)$.
Now we prove the following result.

\begin{theorem} \label{thm:prootuniform}
For any $x\in[0,1]$, for the sequences \eqref{eq:prootfun} we have 
\begin{enumerate}
\item $0\le m_k(x)\le m_{k+1}(x)\le 1$, 
\item $x^\frac{1}{p}\le y_{k+1}(x)\le y_k(x)$,
\item $\lim_{k}\|m_k-1\|_{\infty}=0, \lim_k\|y_k-x^{\frac{1}{p}}\|_{\infty}=0$,
\end{enumerate}
where inequalities are strict if $x\ne 0,1$.
\end{theorem}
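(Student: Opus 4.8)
The plan is to follow the same philosophy used for the square root preceding Corollary~\ref{thm:cor22}, reducing everything to a single scalar sequence. The structural key I would establish first is the invariant
\[
    y_k(x)^p\, m_k(x) = x, \qquad k = 0,1,\ldots,
\]
proved by a one-line induction: it holds at $k=0$ since $y_0=1$, $m_0=x$, and the inductive step follows because the common factor $\frac{p-1+m_k}{p}$ enters $y_{k+1}$ to the power $+p$ and $m_{k+1}$ to the power $-p$, so the two contributions cancel. Writing $g(m)=\frac{p-1+m}{p}$, the recursion reads $m_{k+1}=g(m_k)^{-p}m_k$ and $y_{k+1}=y_k\,g(m_k)$, and the invariant gives $y_k=(x/m_k)^{1/p}$ whenever $m_k>0$; thus, once the behaviour of $m_k$ is understood, the behaviour of $y_k$ is essentially automatic.

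For part 1 I would argue by induction on $k$ that $0\le m_k\le 1$. Nonnegativity is immediate since $g(m)\ge\frac{p-1}{p}>0$ for $m\ge 0$, so $m_{k+1}=g(m_k)^{-p}m_k\ge 0$. The upper bound $m_{k+1}\le 1$ is equivalent to $m_k\le g(m_k)^p$, i.e.\ to $h(m_k)\ge 0$, where $h(m):=\left(\frac{p-1+m}{p}\right)^p-m$. The computation $h(1)=0$ together with $h'(m)=\left(\frac{p-1+m}{p}\right)^{p-1}-1\le 0$ on $[0,1]$ shows that $h$ is nonincreasing with $h(1)=0$, hence $h\ge 0$ on $[0,1]$; this sign analysis is the main technical point of part 1. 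Monotonicity $m_{k+1}\ge m_k$ then follows from $g(m_k)\le 1$ (as $m_k\le 1$), which gives $g(m_k)^{-p}\ge 1$. All inequalities are strict for $x\in(0,1)$, since then $0<m_k<1$ is preserved and $g(m_k)<1$ strictly.

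Part 2 is then short. The factor $g(m_k)\le 1$ gives $y_{k+1}=y_k\,g(m_k)\le y_k$ (strictly when $m_k<1$), so $\{y_k\}$ is nonincreasing. The lower bound follows from the invariant: since $m_{k+1}\le 1$ we have $y_{k+1}=(x/m_{k+1})^{1/p}\ge x^{1/p}$, with strict inequality when $m_{k+1}<1$; the case $x=0$ is trivial because $y_k\ge 0=x^{1/p}$.

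For part 3, the monotone bounded sequences converge pointwise: $m_k\nearrow m_*\le 1$ and $y_k\searrow y_*$. For $x>0$ one has $m_k\ge m_0=x>0$, so the limit satisfies $m_*=g(m_*)^{-p}m_*$ with $m_*>0$, forcing $g(m_*)=1$ and hence $m_*=1$; then $y_*=(x/m_*)^{1/p}=x^{1/p}$. To upgrade pointwise to uniform convergence I would invoke Dini's theorem, exactly as for the square root: the $m_k$ and $y_k$ are continuous on the compact domain (well-defined since $g$ never vanishes there), they are monotone, and the limits are continuous. The one delicate point --- and the place I expect the only real obstacle --- is that at $x=0$ one has $m_k\equiv 0$, so $m_k$ does \emph{not} converge to $1$ there and its pointwise limit is discontinuous on all of $[0,1]$, which defeats Dini for the $m$-sequence on $[0,1]$. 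This is resolved by recalling that in the intended application $A$ is positive definite, so $\mathrm{sp}(A)\subset[\gamma,1]$ with $\gamma>0$; on such an interval the limit of $m_k$ is the constant $1$, so Dini applies and yields $\|m_k-1\|_\infty\to 0$. For $y_k$ no such issue arises, since $x^{1/p}$ is continuous on all of $[0,1]$, and Dini gives $\|y_k-x^{1/p}\|_\infty\to 0$ directly.
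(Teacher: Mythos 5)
Your proof is correct and follows the same overall strategy as the paper's: an induction showing that $\{m_k\}$ is monotone and stays in $[0,1]$, the observation that the common factor $\frac{p-1+m_k}{p}\le 1$ forces $\{y_k\}$ to decrease, and Dini's theorem to pass from pointwise monotone convergence to uniform convergence. Two differences are worth recording. First, your invariant $y_k(x)^p\,m_k(x)=x$ does not appear in the paper: there, the bound $m_{k+1}\ge m_k$ is obtained by a separate induction using the monotonicity of $f(t)=t\bigl(\frac{p-1+t}{p}\bigr)^{-p}$ on $[0,1]$, and the lower bound $y_k\ge x^{1/p}$ is justified by appealing to the already-known limit of the Newton iteration. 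Your invariant makes both of these self-contained, gives $m_{k+1}\ge m_k$ in one line from $g(m_k)^{-p}\ge 1$, and identifies the limits of both sequences without external input; this is a genuine simplification. Second, and more importantly, the ``delicate point'' you isolate at $x=0$ is a real defect that the paper's proof (which disposes of part 3 with the single sentence that it follows from Dini's theorem) passes over: since $m_k(0)=0$ for every $k$, one has $\|m_k-1\|_{\infty,[0,1]}\ge 1$ for all $k$, so the first limit in part 3 is false if the supremum is taken over all of $[0,1]$, and Dini cannot apply because the pointwise limit of $m_k$ is discontinuous there. Your repair --- restricting to an interval $[\gamma,1]$ with $\gamma>0$, which is legitimate in the intended application where $A$ is positive definite and $\mathrm{sp}(A)$ is bounded away from $0$ --- is the right one; note that the same caveat should then be attached to Corollary~\ref{thm:cor24}, whose hypothesis $v^TAv\ge 0$ alone does not exclude $0\in\mathrm{sp}(A)$ and hence does not suffice for $\lim_k\|M_k-I\|_2=0$. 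The claim for $y_k$ is unaffected, exactly as you observe, since its pointwise limit $x^{1/p}$ is continuous on all of $[0,1]$.
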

\begin{proof}
Concerning Part 1, we prove by induction that $0\le m_k(x)\le 1$. Clearly, since $m_0(x)=x$ we have  $0\le m_0(x)\le 1$. For the inductive step, we observe that the function $f(t)=t(\frac{p-1+t}p)^{-p}$ such that $m_{k+1} = f(m_k)$, maps monotonically the interval 
$[0,1]$ into itself, so that $0\le m_k\le 1$ implies $0\le m_{k+1}\le 1$. Similarly, for the inequality $m_k\le m_{k+1}$ we have $m_0\le m_1$ and the monotonicity of $f(t)$ inductively implies that $m_k\le m_{k+1}$.
Part 2, follows since $(\frac{p-1+m_k(x)}{p})<1$ under the assumption $0\le m_k(x)\le 1$, and we know that the limit of the sequence is $x^\frac 1p$.
Finally, Part 3 is proved by applying Dini's theorem.
\end{proof}

{}From the identities $Y_k=y_k(A)$ and $M_k=m_k(A)$ we deduce the following.

\begin{corollary}\label{thm:cor24}
Let $A=T(a)+E_A\in\QT$ be self-adjoint and such that $a$ is \rv\ and $v^TAv\ge 0$ for any $v\in\ell^2$, $v\ne 0$. Then the sequences $Y_k$ and $M_k$ generated by \eqref{eq:proot2} are such that $\lim_k\|M_k-I\|_2=0$, $\lim_k\|Y_k-A^\frac12\|_2=0$.
\end{corollary}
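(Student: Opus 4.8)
The plan is to mirror the proof of Corollary~\ref{thm:cor22} for the square root, transporting the scalar statements of Theorem~\ref{thm:prootuniform} to the operator level through the continuous functional calculus of Lemma~\ref{thm:lemC1}. First I would locate the spectrum of $A$: since $A$ is self-adjoint with $x^*Ax\ge 0$ for every $x$, its numerical range $\mathrm{W}(A)$ is a bounded subset of $[0,\infty)$, and by Lemma~\ref{lem:toeplitz}, part~(v), $\mathrm{sp}(A)$ lies in the closure of $\mathrm{W}(A)$, hence in $[0,\infty)$. Combined with the normalization $0\le a\le 1$ imposed at the start of the section, the aim is to conclude $\mathrm{sp}(A)\subset[0,1]$, the interval on which Theorem~\ref{thm:prootuniform} operates (I return to this reduction below).

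Next I would invoke the identities $M_k=m_k(A)$ and $Y_k=y_k(A)$ recorded just before the statement, where $m_k,y_k$ are the rational functions of~\eqref{eq:prootfun}. By Theorem~\ref{thm:prootuniform}, part~1, one has $0\le m_k(x)\le 1$ on $[0,1]$, so every denominator $\tfrac{p-1+m_k(x)}{p}$ stays in $[\tfrac{p-1}{p},1]$, bounded away from zero; hence each $m_k$ and $y_k$ is continuous on $[0,1]\supseteq\mathrm{sp}(A)$, each operator $\tfrac1p\bigl((p-1)I+M_k\bigr)$ is invertible (its spectrum lies in $[\tfrac{p-1}{p},1]$), and the functional-calculus identities are legitimate. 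In particular the error functions $\psi_k(x):=m_k(x)-1$ and $\chi_k(x):=y_k(x)-x^{1/p}$ are continuous on $\mathrm{sp}(A)$.

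The heart of the argument is then a single application of Lemma~\ref{thm:lemC1}, part~(iii): as $A$ is self-adjoint and $\psi_k,\chi_k\in C(\mathrm{sp}(A))$,
\[
  \|M_k-I\|_2=\|\psi_k(A)\|_2=\|\psi_k\|_{\infty,\mathrm{sp}(A)}\le\|m_k-1\|_{\infty,[0,1]},
\]
\[
  \|Y_k-A^{1/p}\|_2=\|\chi_k(A)\|_2=\|\chi_k\|_{\infty,\mathrm{sp}(A)}\le\|y_k-x^{1/p}\|_{\infty,[0,1]}.
\]
Both right-hand sides vanish as $k\to\infty$ by Theorem~\ref{thm:prootuniform}, part~3, which yields $\lim_k\|M_k-I\|_2=0$ and $\lim_k\|Y_k-A^{1/p}\|_2=0$.

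The analytic effort is entirely concentrated in Theorem~\ref{thm:prootuniform}, so I expect no real difficulty beyond the reduction to $\mathrm{sp}(A)\subset[0,1]$, which I regard as the one delicate point. Unlike the square root, whose scalar Newton sequence converges monotonically on every compact interval $[0,c]$, the monotone convergence of the $p$th-root sequences~\eqref{eq:prootfun} is established in Theorem~\ref{thm:prootuniform} only on $[0,1]$; I must therefore genuinely secure the spectrum inside $[0,1]$. The lower bound is automatic from positivity, but the upper bound is not guaranteed by $0\le a\le 1$ alone, since the compact correction $E_A$ can create eigenvalues above $1$. I would close this gap either by strengthening the normalization to $\|A\|_2\le1$ (so that $\mathrm{W}(A)$, and thus $\mathrm{sp}(A)$, lands in $[0,1]$) or by extending the monotonicity analysis of Theorem~\ref{thm:prootuniform} to an arbitrary $[0,c]$; a naive rescaling $A\mapsto A/c$ is not available here because it does not commute with the fixed initialization $M_0=A$, $Y_0=I$. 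Once $\mathrm{sp}(A)\subset[0,1]$ is in hand, the remaining checks (no poles, invertibility, continuity) are routine.
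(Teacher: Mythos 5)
Your proof is correct and follows essentially the same route as the paper, which deduces the corollary from the identities $Y_k=y_k(A)$, $M_k=m_k(A)$ together with Theorem \ref{thm:prootuniform} and the continuous functional calculus of Lemma \ref{thm:lemC1}, exactly mirroring Corollary \ref{thm:cor22}. Your one reservation is well taken: what the argument needs is $\mathrm{sp}(A)\subset[0,1]$, which the paper secures only through the blanket normalization $\|A\|_2\le 1$ announced at the start of Section \ref{sec:issues} rather than through $0\le a\le 1$ alone (and note that the corollary's $A^{\frac12}$ should read $A^{1/p}$, as your proof correctly has it).
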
 

{}As in the square root case, the symbols associated with the matrices in \eqref{eq:proot2} uniformly converge to the symbols of their limit and the compact corrections associated with these matrix sequences converge in the $\mathcal B(\ell^2)$ norm to the compact corrections of their limit. 

\begin{remark}\rm
Note that the proofs of Corollaries \ref{thm:cor22} and \ref{thm:cor24} rely uniquely on the fact that the spectrum of a self-adjoint operator is contained in the closure of the numerical range and thus they can be stated on the milder hypothesis that $A$ is a self-adjoint operator in $\mathcal B(\ell^2)$. 
\end{remark}

\subsection{Computing the symbol $g(z)$}

Observe that both the symbol of the ALM mean and the NBMP mean is $g(z)=(a_1(z)\ldots a_p(z))^{1/p}$. 
The application of the iterations of Section \ref{sec:planar} in the arithmetic of quasi-Toeplitz matrices, provides as a result both the values of $g(z)$ and of the correction $K$ such that $G=T(g)+K$ is the sought geometric mean. However, if only the symbol part of $G$ is needed, then it might be more convenient to compute the coefficients of $g(z)$ by means of the evaluation/interpolation technique. 

In this section,  
{when the symbols are such that $a_{i}\in \mathcal W$, $i=1,\ldots,p$, } based on the evaluation/interpolation at the roots of unity, we provide  an algorithm for computing the approximation $\wt{g}_{j}$, $j=-n+1,\ldots,n$, to the Fourier coefficients $g_j$ of the symbol $g(z)=(a_1(z)\cdots a_p(z))^{1/p}$.

Let $n>0$ be an integer, set $m=2n$ and let $w_m=\cos\frac{2\pi}{m}+\iunit\sin\frac{2\pi}{m}$ be the principal $m$-th root of 1. There is always a unique Laurent polynomial  $\wt{g}(z)=\sum_{-n+1}^n\wt{g}_jz^j$ such that $g(w_m^{\ell})=\wt{g}(w_m^{\ell})$, that is, $\wt{g}(z)$ interpolates $g(z)$ at $w_m^{\ell}$, $\ell=-n+1,\ldots,n$.

The computation of the approximation $\wt{g}_j$, $j=-n+1,\ldots,n$, to the coefficients $g_j$ of $g(z)$ can proceed by first selecting a positive integer $n$ and evaluating $g(w_m^{\ell})$, $\ell=-n+1,\ldots,n$; and then interpolating $g(z)$ at $w_m^{\ell}$, $\ell=-n+1,\ldots,n$, by means of the FFT, obtaining the coefficients $\wt{g}_j$ of $\wt{g}(z)=\sum_{j=-n+1}^n\wt{g}_jz^j$. We stop the process if $\wt{g}(z)$ is close enough to $g(z)$, otherwise we continue this process by doubling the value of $n$.

Concerning the accuracy of the approximation, we recall the following result from \cite[Theorem 3.8]{blm:book}
\begin{equation}\label{eq:fftbound}
\wt g_i = g_i +\sum_{k=1}^\infty (g_{i+kn}+g_{i-kn}).
\end{equation}
If
$g\in\mathcal W$ then $\lim_n\sum_{|j|\ge n} |g_j|=0$ so that, for a given $\epsilon>0$ there exists a sufficiently large $n$ such that 
$\sum_{|j|\ge n} |g_j|\le \epsilon$. Thus, from \eqref{eq:fftbound}  we deduce that $|\wt g_i - g_i|\le\epsilon$.
Under additional assumptions on $g(t)$, a guaranteed way for determining $n$ such that the above bound is satisfied, can be easily determined (see \cite[page 57]{blm:book} for further details). In general, we may adopt the following heuristic criterion, to halt the evaluation/interpolation procedure (see also \cite{robol}) where the iteration is terminated if

\begin{equation}\label{stopping}
\sum_{|j|>\lceil\frac{n}{2}\rceil}|\wt{g}_{j}|< \sum_{j=-n+1}^n|\wt{g}_{j}|\cdot \epsilon.
\end{equation}

We summarize the procedure for computing the coefficients 
$\wt{g}_j$, $j=-n+1,\ldots,n$, as Algorithm \ref{alg:g}. 
The overall computational cost of this algorithm is $O(n \log n)$ 
arithmetic operations.

\begin{algorithm}[h]
\caption{Approximation of $g(z)$}
\label{alg:g}
 \begin{algorithmic}[1]
 \REQUIRE{The coefficients of $a_i(z)$, $i=1,\ldots,p$,  and a tolerance $\epsilon>0$.}
\ENSURE{Approximations $\wt g_j$, $j=-n+1,\ldots,n$,   to the coefficients $g_j$ of $g(z)$
 such that $\sum_{|j|>\lceil\frac{n}{2}\rceil}|\wt{g}_{j}|< \sum_{j=-n+1}^n|\wt{g}_{j}|\cdot \epsilon$.}

\STATE{Set $n=4$, $m=2n$ and $w_m=\cos\frac{2\pi}{m}+\iunit \sin\frac{2\pi}{m}$. Evaluate $a_i(z)$ at $z=w_m^{\ell}$ for $\ell=-n+1,\ldots, n$ and for $i=1,\ldots,p$;
}
\STATE{For $\ell=-n+1,\ldots, n$, compute the $p$-th root $r_{\ell}$ of $(a_1(z)\cdots a_p(z))$ at $z=w_m^{\ell}$;
}
\STATE{Interpolate the values $r_{\ell}$, $\ell=-n+1,\ldots, n$, by means of FFT and obtain the coefficients $\wt{g}_i$ of the Laurent polynomial $\wt{g}(z)=\sum_{j=-n+1}^n\wt{g}_jz^j$ such that $g(w_m^{\ell})=\wt{g}(w_m^{\ell})$, $\ell=-n+1,\ldots,n$;
}
\STATE{Compute $\delta_m=\sum_{|j|>\lceil  \frac{n}{2}\rceil}|\wt{g}_{j}|$ and $\kappa_m=|\wt{g}_0|+\sum_{j=-n+1}^n|\wt{g}_{j}|$;
}
\STATE{If $\delta_m < \kappa_m\epsilon$ then exit, else set $n=2n$ and compute from Step 2.
}
\end{algorithmic}
\end{algorithm}

Observe that if $a_i>0$ for $i=1,\ldots,p$, has only real coefficients,  then $a_i(w_m^\ell)=a_i(w_m^{-\ell})$ for $\ell=-n+1, \ldots, n$ and $i=1,\ldots,p$, then  Step 2 of Algorithm \ref{alg:g} can proceed by computing $a_i(w_m^{\ell})$ for $\ell=0,\ldots,n$ and setting $r_{\ell}=r_{-\ell}$ for $\ell=-1,\ldots, -n+1$.

\section{Geometric means of Finite \QTmm} \label{sec:finite}

The representation and the arithmetic for \QTmm, up to a certain extent, can be adapted for handing finite dimensional matrices. Accordingly, we show that {iteration \eqref{eq:almbmp}} for computing the mean $G$ can be applied to finite dimensional matrices that can be written as a sum of a Toeplitz matrix and a low-rank matrix correction.

Given a symbol $a(z)$ and $m\in \mathbb{Z}^+$, we denote by $T_m(a)$, $H_m(a^{-})$ and $H_m(a^+)$ the $m\times m$ leading principal submatrices of $T(a)$, $H(a^-)$ and $H(a^+)$, respectively. The following theorem can be seen a finite dimensional version of Theorem \ref{th:widom}, which implies that our algorithms for geometric means of \QTmm\ can be applied also for finite size matrices.

\begin{theorem}[\cite{Widom}]\label{thm:finite}
If $a,b\in L^{\infty}$, then
\[
T_m(a)T_m(b)=T_m(ab)-H_m(a^-)H_m(b^+)-JH_m(a^+)H_m(b^-)J,
\]
where $J$ is the $m\times m$ flip matrix having 1 on the anti-diagonal and zeros elsewhere.
\end{theorem}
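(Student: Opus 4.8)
The plan is to prove the identity entrywise: both sides are $m\times m$ matrices, so equality reduces to matching the $(i,j)$ entry for all $1\le i,j\le m$. First I would write $(T_m(a)T_m(b))_{i,j}=\sum_{k=1}^m a_{k-i}b_{j-k}$ and, substituting $s=k-i$, recast it as $\sum_{s=1-i}^{m-i}a_s b_{j-i-s}$. On the other hand $(T_m(ab))_{i,j}=(ab)_{j-i}=\sum_{s\in\Z}a_s b_{j-i-s}$, so the correction to be produced,
\[
(T_m(ab)-T_m(a)T_m(b))_{i,j}=\Bigl(\sum_{s\le -i}+\sum_{s\ge m-i+1}\Bigr)a_s b_{j-i-s},
\]
splits into a low-index tail, localized near the top-left corner, and a high-index tail, localized near the bottom-right corner. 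The whole proof then amounts to recognizing these two tails as the two Hankel correction terms on the right-hand side.

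For the first tail I would reuse the index manipulation already behind the infinite Widom formula (Theorem \ref{th:widom}): computing $(H_m(a^-)H_m(b^+))_{i,j}=\sum_{k=1}^m a_{-i-k+1}b_{k+j-1}$ and substituting $s=-i-k+1$ turns it into $\sum_{s=-i-m+1}^{-i}a_s b_{j-i-s}$, matching the $\sum_{s\le -i}$ tail. The essential new ingredient relative to the infinite case is the bottom-right correction, and here the flip matrix $J$ does the work: since $(JMJ)_{i,j}=M_{m+1-i,\,m+1-j}$, conjugation by $J$ maps the bottom-right corner to the top-left one while reversing both indices $i\mapsto m+1-i$ and $j\mapsto m+1-j$. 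I would then compute $(H_m(a^+)H_m(b^-))_{m+1-i,\,m+1-j}=\sum_{k=1}^m a_{m-i+k}b_{j-m-k}$ and substitute $s=m-i+k$ to obtain $\sum_{s=m-i+1}^{2m-i}a_s b_{j-i-s}$, which matches the $\sum_{s\ge m-i+1}$ tail.

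The main obstacle is precisely the bookkeeping in this bottom-right term: one must verify that conjugating the $H_m(a^+)H_m(b^-)$ product by $J$ yields exactly the reversed indices and the correct summation range, which is where sign and off-by-one errors naturally creep in. A cleaner, if less elementary, route that avoids part of this is to write $T_m(a)=P_mT(a)P_m$ with $P_m$ the compression onto the first $m$ coordinates, expand $T_m(a)T_m(b)=P_mT(a)P_mT(b)P_m$, substitute the inner $P_m=I-Q_m$, and apply Theorem \ref{th:widom} to the resulting $P_mT(a)T(b)P_m$ piece; the remaining term $P_mT(a)Q_mT(b)P_m$ is the one that, after the flip, becomes $JH_m(a^+)H_m(b^-)J$. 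In either route I would finally record that the two finite Hankel sections capture the tails exactly when the symbols are trigonometric polynomials of degree at most $m$ — the case relevant to finite computations — so that the identity holds in the form used in the sequel.
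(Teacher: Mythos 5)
First, a remark on the comparison you were asked for: the paper does not prove this statement at all --- it is quoted from \cite{Widom} --- so an entrywise verification is a reasonable thing to supply, and your index bookkeeping (the substitutions $s=k-i$, $s=-i-k+1$, $s=m-i+k$, and the identity $(JMJ)_{i,j}=M_{m+1-i,\,m+1-j}$) is all correct.

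There is, however, a genuine gap at the step where you assert that the two finite Hankel products ``match'' the tails. Your own computation shows that $(H_m(a^-)H_m(b^+))_{i,j}=\sum_{s=-i-m+1}^{-i}a_sb_{j-i-s}$ and $(JH_m(a^+)H_m(b^-)J)_{i,j}=\sum_{s=m-i+1}^{2m-i}a_sb_{j-i-s}$, whereas the tails to be reproduced are $\sum_{s\le -i}$ and $\sum_{s\ge m-i+1}$. The terms with $s\le -i-m$ and with $s\ge 2m-i+1$ are unaccounted for, and they do not vanish for general $a,b\in L^\infty$: for $m=1$ and $a=b=z^2+z^{-2}$ the left-hand side is $0$ while the right-hand side equals $(ab)_0=2$. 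So the identity with $H_m(\cdot)H_m(\cdot)$ read as products of $m\times m$ finite sections is false at the stated level of generality. Widom's formula in the form valid for all $a,b\in L^\infty$ uses the $m\times m$ leading sections of the \emph{products} of the infinite Hankel operators, i.e.\ $P_mH(a^-)H(b^+)P_m$ and $J\bigl(P_mH(a^+)H(b^-)P_m\bigr)J$, where one truncates after multiplying; these do give the full tails $\sum_{s\le -i}$ and $\sum_{s\ge m-i+1}$, and your alternative route via $P_m=I-Q_m$ produces exactly this version, not the finite-section one. Your closing remark --- that the finite sections capture the tails exactly for trigonometric polynomials of degree at most $m$ --- is the correct repair (in fact it suffices that \emph{one} of $a,b$ be such a polynomial, since in every missing term both the $a$-index and the $b$-index have modulus at least $m+1$, which is consistent with how the paper actually uses the theorem for banded symbols with $r\le m/4$), but it must be promoted from an afterthought to the hypothesis under which the displayed identity is proved; as written, the argument for the $L^\infty$ statement breaks at the matching step.
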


If we focus on finite size Toeplitz matrices, whose symbols are Laurent polynomials of the kind $\sum_{j=-r}^r a_jz^j$, where the degree $r$ is small compared to the matrix size $m$, say, $r\le m/4$, then Theorem \ref{thm:finite} shows that the product of matrices of this kind can be represented as the sum of a finite Toeplitz matrix and two low-rank matrix corrections with nonzero entries (the support of the correction) located in the upper leftmost and in the lower rightmost corners, respectively. Observe also that if the matrices are real symmetric or complex Hermitian, then $a^-=a^+$ and $b^-=b^+$ so that the compact correction obtained in the infinite case provides both the corrections in the upper leftmost and the lower rightmost corner. 

A similar property holds 
if  the matrices can be written as the sum of a band Toeplitz matrix associated with a Laurent polynomial of degree at most $r$, and a correction with support located in the two opposite corners, provided that the values of $r$ and of the size $s$ of the support are suitably smaller than $m$, say, $r,s\le m/4$.
Moreover, comparing Theorem \ref{thm:finite} with Theorem \ref{th:widom} one can see that the Toeplitz part $T_m(ab)$ and the correction $H_m(a^-)H_m(b^+)$ coincide with the $m\times m$ leading principal submatrices of the infinite matrices $T(ab)$ and $H(a^-)H(b^+)$, respectively, so that the infinite $\QT$ arithmetic provides the corresponding result of the finite $\QT$   arithmetic. This property holds even in the case where there is an overlapping of the supports in the two corners of the corrections, or if the bandwidth $r$ takes large values.
In this situation, the implementation of the arithmetic is still possible \cite{bmr}, but its not efficient from the computational point of view.

This allows us to implement the computation of the sequences $A_i^{(k)}$ of $m\times m$ matrices generated by \eqref{eq:almbmp} relying on the computation of their infinite counterparts. This implementation leads to an effective computation if the numerical degree of the symbols as well as the size (the maximum between rows and columns) of the supports of the corrections of the matrices $A_i^{(k)}$ remain bounded from above by a constant smaller than or equal to $m/4$.

\section{Numerical experiments}\label{sec:exp}

In order to apply the theoretical results of the previous sections, we show by some tests the effectiveness of computing the ALM and the NBMP means of three matrices $A_1,A_2,A_3\in\QT$, and the convenience of computing the associated symbol $g$ with the evaluation/interpolation method. 

The algorithms of Section \ref{sec:issues} have been implemented in MATLAB by following the lines of the corresponding implementations for finite positive definite matrices of the Matrix Means Toolbox (\url{http://bezout.dm.unipi.it/software/mmtoolbox/}). They rely on the package CQT-Toolbox of \cite{bmr} for the storage and arithmetic of \QTmm.  The software can be provided by the authors upon request. 

The tests have been run on a cluster with 128GB of RAM and 24 cores. The internal precision of the CQT toolbox has been set to {\tt threshold = 1.e-15}, while the value of $\epsilon=\tt 10 * threshold=1.e-14$ has been used to truncate the values of the symbol and of the correction. That is, only the values $g_i$, $i=0,\ldots,k$ are computed, where $k$ is such that $|g_j|\le\epsilon\|g\|_\infty$ for $j>k$. Similarly, for the correction $E_G$, written in the form $E_G=UV^T$, where $U$ and $V$ are thin matrices, we computed only the values $u_{i,j}$, $i=1,\ldots,m$  and $v_{i,j}$, $i=1,\ldots,n$, where $m,n$ are such that $|u_{i,j}|<\epsilon\max_i|u_{i,j}|$ for any $i>m$ and $j$ and $|v_{i,j}|<\epsilon\max_i|v_{i,j}|$ for any $i>n$ and $j$, respectively. 

The test examples have been constructed by relying on trigonometric symbols in the class $f(t)=f_0+2f_1\cos(t)+2f_2\cos(2t)$, for different values of the coefficients $f_0,f_1,f_2$. A Toeplitz matrix associated with a symbol of this type turns out to be pentadiagonal.

With the choices of $(f_0,f_1,f_2)$ in the set $\{(2,1,0),~ (3,2,1),~ (9,4,4)\}$, the symbol $f(t)$ takes values in the intervals $[0,4]$, $[0,9]$, $[0,25]$, respectively, and 0 belongs to the image of the symbol in all three cases. 
By perturbing the value of $f_0$ into 
$f_0+\theta$ where $\theta>0$, we are able to tune the ratio $\max_t f(t)/\min_t f(t)$ which for finite matrices is related to the condition number of the associated Toeplitz matrix.
More specifically, with the choices $\theta=1,0.1,0.01$ we have three sets of matrices with increasing condition numbers.

In Table \ref{tab:1}, for each value of $\theta$ we report the numerical length of the symbol $g(z)$ together with the CPU time needed by the evaluation/interpolation technique to compute the coefficients of $g(z)$. The CPU time needed for this computation is quite negligible and the numerical length of the symbol, as well as the number of interpolation points, grow as the ratio $\min_t g(t)/\max_t g(t)$ gets closer to 0, as expected. 

\begin{table}
\begin{center}
\begin{tabular}{c|ccc}
$\theta$&length&$n$&CPU\\\hline
1&110&512&{\tt 1.7e-4}\\
0.1&317&2048&{\tt 4.2e-4}\\
0.01&926&4096&{\tt 6.3e-4}
\end{tabular}
\caption{Length of the symbol, number of interpolation points and CPU time in seconds for computing the coefficients of $g(z)$.}\label{tab:1}
\end{center}
\end{table}

In Table \ref{tab:2}, for each value of $\theta$ we report the number of iterations and the CPU time needed to compute the ALM mean $G_{\scriptsize\rm ALM}$ and the NBMP mean $G_{\scriptsize\rm NBMP}$, together with  the numerical size and the numerical rank of the compact correction. 

We may observe that the NBMP iteration arrives at numerical convergence in just 3 steps, while the ALM iteration requires 43 steps independently of the condition number of the matrices. 
This fact reflects the different aysmptotic convergence order of the two iterations for finite size matrices: while the ALM converges linearly, the NBMP converges cubically \cite{bmp}.

As for the symbol, the size and the rank of the correction grows when the matrices to be averaged get more ill conditioned. For this computation, the CPU time needed is much larger than the time needed to compute just the symbol $g(t)$.
A closer analysis, performed by using the MATLAB profiler, shows that the most part of time is spent to perform compression operations in the CQT-Toolbox.
By compressing a matrix $E$ with respect to a threshold value $\varepsilon$, we mean to find a matrix $\widetilde E$ of the lowest rank such that $\|E-\widetilde E\| \le \varepsilon$.

 \begin{table}
\begin{center}
\begin{tabular}{c|cccc|cccc}
\multicolumn{1}{c}{}&\multicolumn{4}{c}{ALM} & \multicolumn{4}{c}{NBMP}\\\hline
$\theta$&iter.&CPU&size&rank  &iter.&CPU&size&rank     \\\hline
1&43&{\tt 2.5e1}& 95&17   &3&{\tt 4.5e0}&108&16 \\
0.1&43&{\tt 1.2e2}&290&30  &3&3.2e1&345&28 \\
0.01&43&{\tt 1.8e3}&1220&44     &3&9.4e2&2506&46
\end{tabular}
\caption{Number of iterations, CPU time in seconds, size and rank of the correction in the computation of the ALM and the NBMP mean.}\label{tab:2}
\end{center}
\end{table}

In order to give an idea of the structure of the geometric mean, for $\theta=1$, in Figure \ref{fig:1} we show in logarithmic scale the graph of the symbol $g(t)$, i.e., the plot of the pairs $(j,\log_{10}|g_j|)$ for $j\ge 0$, and of the compact corrections $E=(e_{i,j})$, i.e., the plot of the triples $(i,j,\log_{10}|e_{i,j}|)$ for $G_{\scriptsize\rm ALM}$. While in Figure \ref{fig:2} we show the graph of the correction of
$G_{\rm\scriptsize NBMP}$ together with the modulus of componentwise difference $G_{\rm\scriptsize ALM}-G_{\rm\scriptsize NBMP}$. 

We may see that the geometric mean, even though is an infinite dimensional matrix, can be effectively approximated by a finite number of parameters once it is decomposed into its Toeplitz part and its compact correction. From the graphs in Figures \ref{fig:1} and \ref{fig:2} we may appreciate some rounding errors in the compact corrections. For the ALM mean, errors are located in the components of modulus less than $10^{-20}$, while for the NBMP mean the affected components have modulus less than $10^{-18}$ along an edge of the domain.

Since the ALM and the NBMP means differ only in the compact correction, an interesting remark is that this difference has values of modulus less than $10^{-5}$, see Figure \ref{fig:2}. This reflects also in the infinite dimensional case, the relatively small difference that has been usually observed between the two mean in the finite dimensional case \cite{bini-iannazzo-2011}. 

\pgfdeclareimage[width=5cm]{symbol}{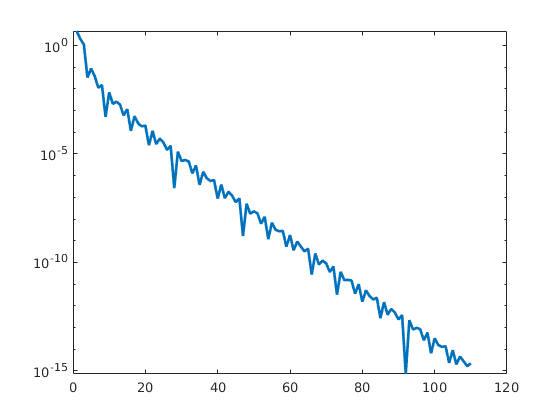} 
\pgfdeclareimage[width=5.5cm]{correctiondiff}{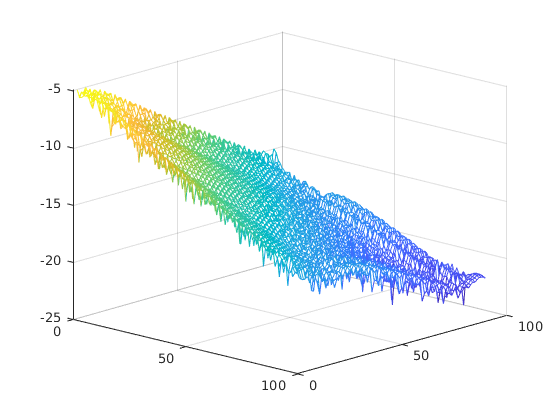}
\pgfdeclareimage[width=5.5cm]{correctionalm}{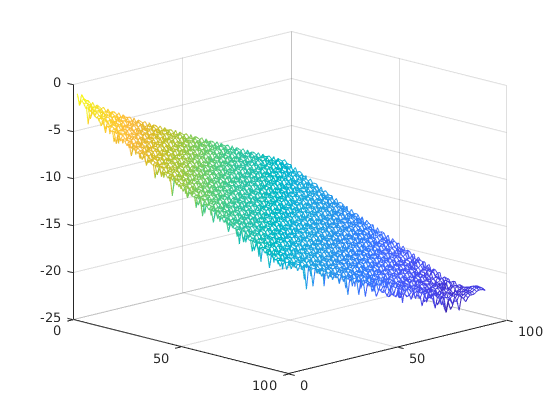}
\pgfdeclareimage[width=5.5cm]{correctionbmp}{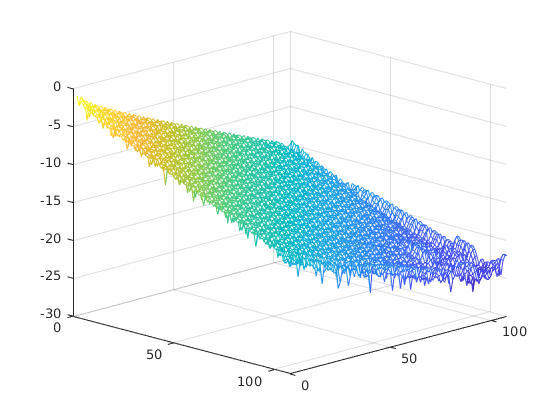}
\begin{figure}
\pgfuseimage{symbol} \pgfuseimage{correctionalm}
\caption{Absolute value of the symbol and of the correction of the ALM mean in log scale.}\label{fig:1}
\end{figure}

\begin{figure}
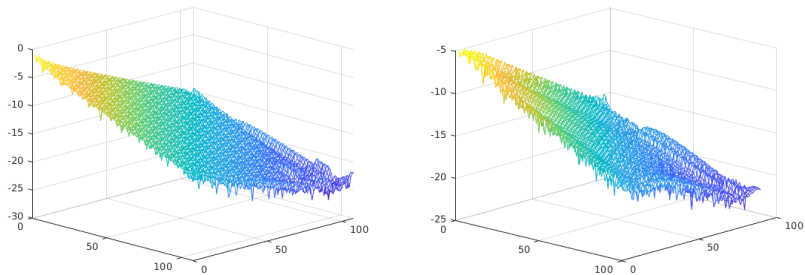

\pgfuseimage{correctionbmp} \pgfuseimage{correctiondiff}
\caption{Absolute value of the symbol of the NBMP mean (left) and of the difference between the ALM and the NBMP mean (right) in log scale.}\label{fig:2}
\end{figure}

 \begin{table}
\begin{center}
\begin{tabular}{c|cc}
$\theta$& ALM&NBMP\\\hline
1   &\tt 1.3e1 [2.5e1] &\tt 2.5e0 [4.5e0]\\
0.1 &\tt 1.2e2 [1.2e2] &\tt 3.1e1 [3.2e1]  \\
0.01&\tt 5.5e3 [1.8e3] &\tt 9.1e3 [9.4e2]
\end{tabular}
\caption{CPU time in seconds, needed to compute the ALM and the NBMP means in the case of finite matrices whose size is 3 times the size of the corresponding compact correction. For comparison, the CPU time needed in the infinite case is written between bracket.}\label{tab:3}
\end{center}
\end{table}

Finally, we have compared the CPU time needed by the ALM and NBMP iterations applied to \QTmm\ and to their finite truncation to size $n$. The value of $n$ is chosen equal to 3 times the size of the compact correction. This choice is motivated by the goal to keep well separated the compact correction in the top leftmost corner from that in the bottom rightmost corner of the finite size matrix. The CPU time is reported in Table \ref{tab:3}.
Observe that, while in the case $\theta=1$ of well conditioned matrices, the application of the ALM and NBMP iterations is faster for finite size matrices than for \QTmm, in the slightly more ill conditioned cases the time needed for finite size matrices gets larger than the time taken for \QTmm. In particular, for $\theta=0.01$ the speed-up reached by the computation based on the quasi-Toeplitz technology is roughly 3 for the ALM mean and 9.7 for the NBMP mean.

\section{Conclusions and open issues}\label{sec:conc}
The common definitions of geometric means of positive definite matrices, namely the ALM, NBMP, weighted and Karcher mean, have been extended to the case of infinite matrices which are the sum of a Toeplitz matrix, associated with a continuous symbol,  plus a compact correction. We have shown that these means still keep the same structure  of the input matrices, i.e., they can be written as the sum of a Toeplitz matrix and a compact correction. Moreover, the symbol associated with the Toeplitz part of the mean is the mean of the symbols associated with the averaged matrices. 
We have given, moreover, conditions under which the geometric mean belongs to $\mathcal B(\ell^\infty)$ and under which the ALM sequence of the symbols converges in Wiener norm.

Numerical computations show that the ALM and the NBMP means can be computed also for infinite matrices in the $\QT$ class and that these means can be represented with good precision in terms of a finite number of parameters. Perhaps surprising, these ideas can be useful also when computing means of finite matrices.

An open issue, which we would like to investigate further, is exploiting the availability of the symbol $g(t)$ of the geometric mean $G$ in order to accelerate the computation of the compact correction. 
In fact, even though $g(t)$ can be computed separately at a much lower cost, our current implementations of the ALM and NBMP iterations do not take advantage of the availability of $g(t)$ and compute simultaneously approximation to the symbol and to the compact correction of the mean. Moreover, the large CPU time needed for computing the correction is mainly spent for the operation of low-rank approximations of large matrices. We believe that this part can be much improved by means of randomized techniques both for the task of computing general matrix functions and for computing the geometric mean. 

\bibliographystyle{abbrv}
\bibliography{references}

\end{document}